\documentclass[11pt,a4paper]{article}
\usepackage{indentfirst}
\usepackage{fancyhdr}
\usepackage{color}
\usepackage{mathrsfs}
\usepackage{amsfonts,amssymb,amsmath,amsthm}
\usepackage{authblk}
\usepackage[unicode,pdftex]{hyperref}
\hypersetup{
	colorlinks = true,
	citecolor = green,
	anchorcolor = blue,
	linkcolor = blue}

\newtheorem{thm}{Theorem}[section]
\newtheorem{lemma}[thm]{Lemma}
\newtheorem{coro}[thm]{Corollary}
\newtheorem{prop}[thm]{Proposition}

\newtheoremstyle{rem}{10pt}{10pt}{\rmfamily}{}{\bfseries}{.}{.5em}{} 
\theoremstyle{rem}

\numberwithin{equation}{section} 

\title{Local well-posedness for the Schr\"{o}dinger-KdV system in $H^{s_1}\times H^{s_2}$, II}
\author[1]{Yingzhe Ban}
\author[2]{Jie Chen}
\author[3,1]{Ying Zhang}
\affil[1]{\scriptsize \textit{The Graduate School of China Academy of Engineering Physics, Beijing, 100088, P.R. China}}
\affil[2]{\scriptsize \textit{School of Sciences, Jimei University, Xiamen 361021, P.R. China}}
\affil[3]{\scriptsize \textit{Academy of Mathematics and Systems Science, CAS, Beijing 100190, P.R. China}}

\date{}

\begin{document}
	\maketitle
	\begin{abstract}
		In this paper, we continue the study of the local well-posedness theory for the Schr\"{o}dinger-KdV system in the Sobolev space $H^{s_1}\times H^{s_2}$. We show the local well-posedness in $H^{-3/16}\times H^{-3/4}$ for $\beta = 0$. Combining our work \cite{banchenzhang}, we also have the local well-posedness for $\max\{-3/4,s_1-3\}\leq s_2\leq \min\{4s_1,s_1+2\}$. The result is sharp by using the contraction mapping argument.
	\end{abstract}
	
	\section{Introduction}		
		We study the Cauchy problem for Schr\"{o}dinger-KdV system
		\begin{equation*}\label{model}\tag{S-KdV}
			\left\{
			\begin{aligned}
				&i\partial_t u+\partial_{xx}u = \alpha uv+\beta |u|^2 u,\\
				&\partial_t v+\partial_{xxx}v= \partial_x(\gamma|u|^2-v^2/2),\\
				&(u,v)|_{t = 0} = (u_0,v_0)\in H^{s_1}\times H^{s_2}.
			\end{aligned}
			\right.
		\end{equation*}
		In this paper we concern the local well-posedness theory for \eqref{model} with $\beta = 0$. The constants $\alpha,\gamma\neq 0$ in the model is not essential. We would assume $\alpha = \gamma = 1$ for brevity.
		
		This model appears in the study of resonant interaction between solitary wave of Langmuir and solitary wave of ion-acoustic. See Appert-Vaclavik \cite{appert1977dynamics}. It also appears in plasma physics and 
		a diatomic lattice system. See \cite{guo2010well,corcho2007well} and reference therein.
		
		We recall some early results for this model. Tsutsumi showed the well-posedness in $H^{s+1/2}\times H^s$, $s\in \mathbb{N}$ in \cite{tsutsumi1993well}. In \cite{guo1999well}, Guo--Miao showed the well-posedness in $H^s\times H^s$, $s\in \mathbb{N}$. Guo--Wang \cite{guo2010well} obtained the well-posedness in $H^{s_1}\times H^{-3/4}$, $s_1>-1/16$ and Wang--Cui  showed the well-posedness in $H^{s_1}\times H^{-3/4}$, $s_1>-3/16$ in \cite{wang2011cauchy} . See also \cite{bekiranov1997weak,corcho2007well,banchenzhang} for other results.
		
		In this paper, we obtain the following result.
		
		\begin{thm}
			\eqref{model} is local well-posed in $H^{-3/16}\times H^{-3/4}$.
		\end{thm}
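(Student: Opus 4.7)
The plan is to prove the theorem via a contraction mapping argument on the Duhamel formulation of \eqref{model} in suitable Bourgain-type function spaces $X^{s_1,b}_S$ for $u$ and $X^{s_2,b}_K$ for $v$, adapted respectively to the Schrödinger and KdV linear evolutions. Since $(s_1,s_2)=(-3/16,-3/4)$ is the endpoint improving the previous $s_1>-3/16$ of Wang--Cui, the classical choice $b=1/2+$ will fail logarithmically in the key bilinear estimate, and I expect to use either the $\ell^1$-modified $X^{s,b}$ spaces (with norm $\|\langle\xi\rangle^{s}\langle\tau-\phi(\xi)\rangle^{1/2}\hat f\|_{\ell^1_\lambda L^2}$ after dyadic decomposition in modulation) or the $U^p/V^p$ framework of Koch--Tataru in order to close the estimates with $b=1/2$.

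After the standard Picard setup, the contraction reduces to three bilinear estimates, localized to a time interval of size $T\ll 1$ (which yields the contraction by a small power of $T$ in auxiliary scales):
\begin{align*}
\|uv\|_{X^{-3/16,b-1}_S} &\lesssim \|u\|_{X^{-3/16,b}_S}\|v\|_{X^{-3/4,b}_K},\\
\|\partial_x(u\bar u)\|_{X^{-3/4,b-1}_K} &\lesssim \|u\|_{X^{-3/16,b}_S}^2,\\
\|\partial_x(v^2)\|_{X^{-3/4,b-1}_K} &\lesssim \|v\|_{X^{-3/4,b}_K}^2.
\end{align*}
The last estimate is the endpoint KdV bilinear estimate of Kenig--Ponce--Vega at $s_2=-3/4$ and can be imported (in its $\ell^1$ or $U^p$ version). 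The first two are the genuinely new estimates driving the endpoint $s_1=-3/16$.

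To prove them I would dyadically decompose both factors in spatial frequency $N_1,N_2$ and in modulation $L_1,L_2$, and analyze the output modulation $L_0$ by means of the resonance identities
\[
\xi_1^2-\xi_2^3-(\xi_1-\xi_2)^2 \quad\text{and}\quad \xi_1^2-\xi_2^2-(\xi_1+\xi_2)^3,
\]
which dictate, in each frequency configuration, which of $L_0,L_1,L_2$ must absorb the resonance. Orthogonality, the $L^4_{t,x}$-Strichartz estimate for the Schrödinger piece and the $L^4$/$L^6$ smoothing estimates for the KdV piece reduce matters, in the non-resonant regions, to elementary summations in the dyadic parameters; the exponent $-3/16$ is precisely what balances the derivative loss from $\partial_x$ in the second estimate against the Schrödinger smoothing. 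The main obstacle, and what forces the upgraded function spaces, is the \emph{high-low} regime in the first estimate where $v$ has very low frequency and $u$ carries the output frequency: here the resonance $\xi_2(\xi_2^2-\xi_2-2\xi_1)$ degenerates near $\xi_2\approx 0$, giving a logarithmic divergence in $L_0$ that at $s_1=-3/16$ is only closeable by summing modulation pieces in $\ell^1$ rather than $\ell^2$. I would handle this region by inserting a fine frequency cutoff, using the $\ell^1$-atomic norm to gain the logarithm, and treating the complementary region by standard Cauchy--Schwarz and convolution bounds. Once all three bilinear estimates are established, Banach's fixed point theorem in the ball of radius $\sim\|u_0\|_{H^{-3/16}}+\|v_0\|_{H^{-3/4}}$ produces the unique solution and its continuous dependence on data, completing the proof.
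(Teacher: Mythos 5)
Your plan founders on the second of your three bilinear estimates. At the double endpoint $(s_1,s_2)=(-3/16,-3/4)$ one sits exactly on the boundary $s_2=4s_1$ of the region where the flow map can be $C^2$, and the estimate $\|\partial_x(u\bar u)\|_{X^{-3/4,b-1}_K}\lesssim\|u\|^2_{X^{-3/16,b}_S}$ does \emph{not} hold — not merely with a logarithmic loss in the modulation sums that an $\ell^1$-Besov modification or a $U^p/V^p$ upgrade would recover, but because of the genuinely resonant configuration in which the two Schr\"odinger frequencies $\xi_1,\xi_2$ are of size $N_1$ with output KdV frequency $N$ satisfying $N^2\sim\lambda N_1$ (after rescaling): there the resonance function $\xi^3+\lambda\xi_1^2-\lambda\xi_2^2$ is small on a set of full measure in the interaction, all three modulations can be simultaneously $O(1)$, and no rearrangement of which factor absorbs the modulation can save the estimate. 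The paper confronts this head-on: it substitutes the Duhamel formula for $u$ into $|u|^2$, isolates the free-wave contribution $F[u_0]=\eta\,\mathscr{B}(|S_\lambda(t)u_0|^2)$, observes explicitly that $F[u_0]$ does \emph{not} belong to the $U^2_K$-based iteration space $Y$, and therefore solves for $w=v-F[u_0]$ instead of $v$. The resonant piece $F[P_{\sim N^2/\lambda}u_0]$ is then controlled only in a strictly weaker norm $Z$ built from $V^2_K$ together with local-smoothing ($L^\infty_xL^2_t$) and maximal-function ($L^2_xL^\infty_t$) components (Lemmas 3.1--3.5), and the terms where $F[u_0]$ re-enters the KdV nonlinearity $\partial_x(v^2)$ require dedicated estimates exploiting the two-scale structure of $F[u_0]$ via fine interval decompositions and Taylor expansion of the phase (Lemma 4.3 and Propositions 4.4--4.6). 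None of this is reachable from a direct trilinear Picard iteration in $X^{s,b}$-type spaces, however the modulation weights are modified.

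A secondary but still substantive gap: at this critical endpoint you cannot extract a positive power of $T$ from the critical interactions by shrinking the time interval, so "contraction by a small power of $T$" is not available. The paper instead rescales by a small parameter $\lambda$ so that the rescaled data satisfy $\|u_{0,\lambda}\|_{H^{-3/16}}\lesssim\lambda^{21/16}$, $\|v_{0,\lambda}\|_{H^{-3/4}}\lesssim\lambda^{3/4}$, and then tracks powers of $\lambda$ through every multilinear estimate (e.g.\ $\lambda^{1/2}$ in Lemma 2.2, $\lambda^{-1/2}$ in Lemma 2.3, $\lambda^{-1}$ and $\lambda^{-2}$ in Propositions 4.5--4.6) so that the products of norms of the small data beat the negative powers of $\lambda$ in the constants. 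Your outline for the non-resonant regions (dyadic decomposition, resonance identities, $L^4/L^6$ Strichartz, local smoothing) is consistent with what the paper does there, but without the substitution $w=v-F[u_0]$, the weaker space $Z$, and the rescaling, the argument cannot close at $(-3/16,-3/4)$.
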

		
		In fact combining the results in our work \cite{banchenzhang} we have
		\begin{coro}
			\eqref{model} is local well-posed in $H^{s_1}\times H^{s_2}$ for
			$$\max\{-3/4,s_1-3\}\leq s_2\leq  \min\{4s_1,s_1+2\}.$$
		\end{coro}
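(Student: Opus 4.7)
The plan is to deduce the corollary by directly combining Theorem~1.1 with our earlier paper \cite{banchenzhang}: Theorem~1.1 covers the one extremal corner of the admissible region, while \cite{banchenzhang} covers the rest. Let $R$ denote the closed region $\{(s_1,s_2):\max\{-3/4,s_1-3\}\leq s_2\leq\min\{4s_1,s_1+2\}\}$.

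First I would describe the geometry of $R$. The four linear constraints cut out a convex polygon whose leftmost vertex is precisely $(-3/16,-3/4)$, the intersection of the two ``Bourgain-space'' constraints $s_2=-3/4$ and $s_2=4s_1$. The remaining bounding lines $s_2=s_1-3$ and $s_2=s_1+2$ are energy-type gaps between the two regularities and are inactive near that corner; they become active only far to the right along the diagonal direction.

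Second, I would invoke \cite{banchenzhang}, which, via the contraction mapping in $X^{s,b}$ spaces, already establishes local well-posedness throughout $R$ except possibly at $(-3/16,-3/4)$: as sketched in the introduction above, Wang--Cui had reached $H^{s_1}\times H^{-3/4}$ only for $s_1>-3/16$, and the two-parameter extension in \cite{banchenzhang} inherits the same open-endpoint limitation because the bilinear estimate for the Schr\"odinger--KdV coupling just fails at that corner. Theorem~1.1 of the present paper supplies precisely the missing endpoint, and the union $\{(-3/16,-3/4)\}\cup(R\setminus\{(-3/16,-3/4)\})=R$ then gives the corollary verbatim.

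The main obstacle therefore lies entirely upstream, inside Theorem~1.1 itself; once its endpoint bilinear estimates are in hand, nothing further is needed beyond checking that each boundary edge of $R$ is covered by one of the two inputs. The only genuinely borderline edges are $s_2=-3/4$ and $s_2=4s_1$ near $s_1=-3/16$: along $s_2=-3/4$ with $s_1>-3/16$ and along $s_2=4s_1$ with $s_1>-3/16$ the estimates of \cite{banchenzhang} apply, while at the single pair $(-3/16,-3/4)$ one uses Theorem~1.1; persistence of regularity (monotonicity of the $X^{s,b}$ bilinear estimates in the spatial indices) then fills the interior. The energy-type edges $s_2=s_1\pm\text{const}$ are treated by a standard high-low frequency decomposition and were already handled in \cite{banchenzhang}.
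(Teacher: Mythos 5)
Your proposal matches the paper's own treatment: the corollary is obtained by combining Theorem 1.1, which supplies the double-critical corner $(-3/16,-3/4)$ where the constraints $s_2=-3/4$ and $s_2=4s_1$ meet, with the well-posedness results of \cite{banchenzhang} (together with the easier non-endpoint arguments sketched at the end of Section \ref{birefforkd}, e.g.\ $s_2>-3/4$ without rescaling and $s_2=-3/4$, $s_1>-3/16$ as in \cite{wang2011cauchy}) covering the remainder of the region. The paper itself offers no more than this one-line combination, so your argument is essentially identical; the only cosmetic weakness is the appeal to ``persistence of regularity'' to fill the interior, which is unnecessary since the interior points are covered directly by \cite{banchenzhang} rather than by any monotonicity from the boundary.
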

		
		Recall the proof of Theorem 1.1 in \cite{banchenzhang} one has
		\begin{thm}
			The data-to-solution mapping of \eqref{model} can not be $C^2$ except $\max\{-3/4,s_1-3\}\leq s_2\leq  \min\{4s_1,s_1+2\}$.
		\end{thm}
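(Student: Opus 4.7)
The plan is to run the standard bilinear-estimate obstruction argument: if the data-to-solution map $\Phi:(u_0,v_0)\mapsto(u,v)$ were $C^{2}$ near the origin, then its second derivative $D^{2}\Phi(0,0)$ would be a bounded bilinear map from $(H^{s_1}\times H^{s_2})^{2}$ into $C([0,T];H^{s_1}\times H^{s_2})$. By expanding the Picard iteration $u=u^{(0)}+u^{(1)}+\cdots$, the quantity $D^{2}\Phi(0,0)$ is explicitly the first Duhamel iterate, namely
\[
\mathcal{B}_{S}(u_0,v_0):=\int_{0}^{t}e^{i(t-\tau)\partial_{xx}}\bigl(e^{i\tau\partial_{xx}}u_0\cdot e^{-\tau\partial_{xxx}}v_0\bigr)\,d\tau
\]
in the Schr\"odinger component, and the analogous bilinear Duhamel integrals involving $\partial_{x}|u|^{2}$ and $\partial_{x}(v^{2})$ in the KdV component. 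A $C^{2}$ data-to-solution map therefore forces three bilinear estimates, which I will disprove outside the stated region.

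The four boundaries arise from four distinct obstructions, each treated by testing the relevant bilinear Duhamel integral against Fourier-localized data $u_0=\mathcal{F}^{-1}\chi_{A}$, $v_0=\mathcal{F}^{-1}\chi_{B}$ for rectangles $A,B$ tuned to the resonance function of the corresponding interaction. In each case one computes an $L^{2}$ lower bound on the output (in the appropriate $H^{s_1}$ or $H^{s_2}$ norm) and compares with the product of data norms. The four cases are: $s_2\geq-3/4$ reproduces the classical KdV counterexample coming from $\partial_{x}(v^{2})$; $s_2\geq s_1-3$ comes from $\partial_{x}|u|^{2}$ with $\widehat{u}_0^{(1)},\widehat{u}_0^{(2)}$ supported at comparable high frequencies $\sim N$ producing low-frequency KdV output; $s_2\leq s_1+2$ comes from the Schr\"odinger $uv$ bilinear in the high-high-to-low regime, where the resonance function is minimal; and $s_2\leq 4s_1$ comes from the Schr\"odinger $uv$ bilinear in a high-low-to-high configuration, where the $v$ factor is placed at a frequency $\sim N^{1/2}$ while $u$ sits at frequency $\sim N$, so that the loss from the weak $H^{s_2}$ norm on $v$ precisely scales like $N^{4s_1-s_2}$.

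These constructions are exactly the ones carried out in the proof of Theorem~1.1 of \cite{banchenzhang}, and I would invoke them essentially verbatim, since the choices of Fourier supports and the computation of the resonance function do not change here. The main obstacle, as in \cite{banchenzhang}, is the non-standard boundary $s_2\leq 4s_1$: unlike the other three constraints, it does not come from a pure high-high or high-low interaction with a single natural scaling, but rather from optimizing the ratio of the two frequencies so that the residual phase and the frequency weights combine to give the exponent $4s_1$. The remaining work is to verify that the rectangle dimensions used in \cite{banchenzhang} are compatible with the range $s_1=-3/16$, $s_2=-3/4$ on the corner of the region, after which the complement of $\max\{-3/4,s_1-3\}\leq s_2\leq\min\{4s_1,s_1+2\}$ is covered by a continuity/scaling extension of these counterexamples to nearby $(s_1,s_2)$.
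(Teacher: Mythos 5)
Your overall strategy coincides with the paper's: the paper offers no independent argument for this theorem, but simply recalls the proof of Theorem 1.1 in \cite{banchenzhang}, and the mechanism you describe (a $C^2$ flow map forces boundedness of the second Picard iterate, which is then defeated by Fourier-localized data adapted to the resonance functions) is exactly the standard one used there. As a matter of logic your proposal is therefore as complete as the paper's own proof, since both ultimately defer to the same external constructions.

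However, your dictionary between the four boundary lines and the nonlinear interactions is largely wrong, and this matters because for at least one line the construction you describe cannot work. The constraint $s_2\le 4s_1$ does not come from the Schr\"odinger bilinear term $uv$: testing the second iterate of $uv$ with $u_0$ at frequency $N$ and $v_0$ at frequency $N^{1/2}$ (the resonant configuration $\xi_2^2\sim \xi_1$) gives an operator-norm ratio of order $N^{s_1}/(N^{s_1}\cdot N^{s_2/2})=N^{-s_2/2}$, in which $s_1$ cancels identically, so no relation of the form $s_2\le 4s_1$ can emerge from this interaction. That boundary is produced by the KdV source $\partial_x(|u|^2)$ in the resonant high-high configuration $\xi_1\approx-\xi_2$, $|\xi_1|\sim M$, with output frequency $|\xi|\sim M^{1/2}$ where $\xi(\xi^2+\xi_1-\xi_2)=O(1)$; the ratio is then $M^{s_2/2+1/2-2s_1}$ up to the Duhamel gain, which blows up precisely when $s_2>4s_1$. (This is also the interaction the present paper isolates as $F[P_{\sim N^2/\lambda}u_0]$ and spends Sections \ref{estF}--\ref{birefforkd} controlling at the corner $s_2=4s_1$.) Similarly, $s_2\le s_1+2$ comes from $\partial_x(|u|^2)$ in the high-low-to-high regime, not from the Schr\"odinger $uv$ term, while $s_2\ge s_1-3$ comes from $uv$ with $v_0$ at high frequency and $u_0$ at low frequency (the $\xi_2^3$ modulation supplying at most a factor $N^{-3}$), not from $\partial_x(|u|^2)$. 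Since you ultimately invoke the constructions of \cite{banchenzhang} verbatim, the proof you end up with is acceptable, but the counterexamples you would build from your own descriptions would not cover the claimed region.
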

		Thus it is reasonable to say than our local well-posedness result is sharp by using contraction mapping argument.
		
		To show the local well-posedness in $H^{-3/16}\times H^{-3/4}$, there are two difficulties which come from the terms $\partial_x(|u|^2)$ and $\partial_x(v^2)$. In some sense, $H^{-3/16}\times H^{-3/4}$ is the ``double critical" space for this model. We need some techniques that are different from before.
		
		The paper is organized as follows. In Section \ref{workspaceeasyesti}, we rewrite \eqref{model} into a new form and construct the workspace. Then we show linear estimates, bilinear estimates related the Schr\"{o}dinger equation, and tirlinear estimate related the KdV equation. In Section \ref{estF}, we estimate $F[u_0]$ (see the definition \eqref{defiforF}). In Section \ref{birefforkd}, we give some refinement for the bilinear estimates related to the KdV equation and finally conclude the proof.
		
		\textbf{Notations.} . For $a, b\in \mathbb{R}^+$, $a \lesssim b$ means that there exists $C > 0$ such
		that $a \leq Cb$ and $a \sim b$ means $a \lesssim b \lesssim a$ where $C$ is a constant. We use $\langle \xi\rangle$ to denote $(1+\xi^2)^{1/2}$. Let $\varphi$ be an even, smooth function and $\varphi|_{[-1,1]} = 0$, $\varphi|_{[-2,2]^c} = 0$. $\varphi_N = \varphi(\cdot/N)$, $\psi_N = \varphi_N-\varphi_{N/2}$. We use inhomogeneous Littlewood-Paley projection: %(except in the proof of Lemma \ref{lowfr}):
		$P_N:=\mathscr{F}^{-1}\psi_N\mathscr{F}$, $N\geq 2$; $P_1 = \mathscr{F}^{-1}\varphi\mathscr{F}$; $P_{>N} = \sum_{M>N}P_M$, $etc.$. We always use $N, L$ to denote a dyadic number larger than $1$. 
		
		Let $S_\lambda(t) = e^{i\lambda t\partial_{xx}}, K(t) = e^{-t\partial_{xxx}}$,
		$$\mathscr{A}_\lambda(f):=i\lambda\int_0^tS_\lambda(t-s)f(s)~ds,\quad \mathscr{B}(f):=\int_0^t K(t-s)\partial_x f(s)~ds.$$ 
		We always assume $0<\lambda\ll 1$ in this paper.
		We define the modulation decomposition $Q^{S_\lambda}_L$, $Q_L^K$ by $\mathscr{F}^{-1}_{\tau,\xi}\psi_L(\tau+\lambda\xi^2)\mathscr{F}_{t,x}$, $\mathscr{F}^{-1}_{\tau,\xi}\psi_L(\tau-\xi^3)\mathscr{F}_{t,x}$ respectively. Similarly, we define $Q_{>L}^{S_\lambda}$, $Q_{>L}^K$, $Q_1^{S_\lambda}$, and $Q_1^{K}$.
				
		\section{Function spaces and multilinear estimates}\label{workspaceeasyesti}
		To show the well-posedness of \eqref{model} with $s_2 = -3/4$, we use the argument in \cite{guo2010well}. By rescaling, we consider the equation
		\begin{equation*}
			\left\{
			\begin{aligned}
				&i\partial_t u+\lambda\partial_{xx}u = \lambda uv,\\
				&\partial_t v+\partial_{xxx}v = \partial_x(|u|^2-v^2),\\
				&(u,v)|_{t = 0} = (u_{0,\lambda},v_{0,\lambda})\in H^{-3/16}\times H^{-3/4}
			\end{aligned}
			\right.
		\end{equation*}
		where $0<\lambda\ll 1$, $\|u_{0,\lambda}\|_{H^{-3/16}}\lesssim \lambda^{21/16}$, $\|v_{0,\lambda}\|_{H^{3/4}}\lesssim \lambda^{3/4}$. Recall the definition of $U^p-V^p$ spaces in \cite{hadac2009well}. We define
		$$\|u\|_{X_\lambda}=\|N^{-3/16}P_Nu\|_{l^2_NU^2_{S_\lambda}},~~ \|v\|_{Y}=\|P_1v\|_{L_x^2L_t^\infty}+\|N^{-3/4}P_Nv\|_{l^2_{N\geq 2}U^2_K}.$$
		For the brevity, we still denote $(u_{0,\lambda},v_{0,\lambda})$ by $(u_0,v_0)$. The corresponding integral equation is
		\begin{equation*}
			\left\{
			\begin{aligned}
				u(t)&=S_\lambda(t)u_0-\mathscr{A}_\lambda(uv)(t),\\
				v(t)&=K(t)v_0+\mathscr{B}(|u|^2-v^2)(t).
			\end{aligned}
			\right.
		\end{equation*}
		We replace the function $u$ in second equation with the first one and obtain
		\begin{equation*}
			\left\{
			\begin{aligned}
				u(t)&=S_\lambda(t)u_0-i\mathscr{A}_\lambda(uv)(t),\\
				v(t)&=K(t)v_0+\mathscr{B}(|S_\lambda(t)u_0-\mathscr{A}_\lambda(uv)|^2-v^2)(t).
			\end{aligned}
			\right.
		\end{equation*}
		Let $\eta, \tilde{\eta}\in C_0^\infty$ with $\eta|_{[-1,1]} = 1$, $\tilde{\eta}|_{\mathrm{supp}(\eta)} = 1$. Define
		\begin{equation}\label{defiforF}
			F[u_0](t):=\eta(t)\mathscr{B}(|S_\lambda(t)u_0|^2)(t).
		\end{equation}
		In fact we can not show $F[u_0]\in Y$. Let $w(t) = v(t)-F[u_0](t)$. We consider the equation
		\begin{equation}\label{themodelmanu}
			\left\{
			\begin{aligned}
				u(t)&=\eta S_\lambda(t)u_0-\eta\mathscr{A}_\lambda(u(w+F[u_0]))(t),\\
				w(t)&=\eta K(t)v_0-\eta\mathscr{B}((w+F[u_0])^2)(t)+\eta\mathscr{B}(T_\lambda(u,w,u_0))(t)
			\end{aligned}
			\right.
		\end{equation}
		where $T_\lambda(u,w,u_0)= 2\mathrm{Im}(\mathscr{A}_\lambda(u(w+F[u_0]))\overline{S_\lambda(t)u_0})-|\mathscr{A}_\lambda(u(w+F[u_0]))|^2$.
		Define 
		\begin{equation}\label{normz}
			\|v\|_{Z}:=\|P_{\lesssim 1}v\|_{L_x^2L_t^\infty}+\|N^{-3/4}P_Nv\|_{l^2_{N\geq 2}V^2_K}+\|N^{1/4}P_Nv\|_{l^2_NL_x^\infty L_t^2}.
		\end{equation}
		Note that for $v$ supported on $[0,1]\times \mathbb{R}$, $\|v\|_Z\lesssim \|v\|_Y$ by Strichartz, maximal function, and local smoothing estimates. We would show $F[u_0]\in Z$ in Section \ref{estF}.
		
		%In this section below, we show linear estimates, bilinear estimates related the Schr\"{o}dinger equation, and tirlinear estimate related the KdV equation.
		
		Directly, one has the following estimates due to the maximal function estimate and the definition of $U^2$.
		\begin{lemma}\label{linear}
			$\|\eta S_{\lambda}(t)u_0\|_{X_{\lambda}}\lesssim \|u_0\|_{H^{-3/16}}$, $\|\eta K(t)v_0\|_{Y}\lesssim \|v_0\|_{H^{-3/4}}$.
		\end{lemma}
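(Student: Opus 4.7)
The plan is to reduce both inequalities to the basic identity that the free evolution of an $L^2$ datum is a unit-norm atom in the corresponding $U^2$ space, i.e.\ $\|S_\lambda(\cdot)\phi\|_{U^2_{S_\lambda}} = \|\phi\|_{L^2}$ and similarly $\|K(\cdot)\phi\|_{U^2_K} = \|\phi\|_{L^2}$. Because the Littlewood--Paley projections commute with the linear propagators and $\eta$ is a fixed smooth compactly supported cutoff, the argument decouples across dyadic frequency blocks and the temporal cutoff can be absorbed at a constant cost.

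For the Schr\"{o}dinger estimate it suffices to show, uniformly in $N$ and $\lambda$,
$$\bigl\|\eta S_\lambda(\cdot) P_N u_0\bigr\|_{U^2_{S_\lambda}} \lesssim \|P_N u_0\|_{L^2},$$
since squaring, weighting by $N^{-3/16}$, and taking $\ell^2_N$ reproduces $\|u_0\|_{H^{-3/16}}$. The unrestricted evolution $S_\lambda(\cdot) P_N u_0$ is already a single $U^2_{S_\lambda}$-atom of norm $\|P_N u_0\|_{L^2}$. Multiplication by $\eta$ then preserves $U^2$ boundedness up to a constant depending only on $\eta$, a standard fact about $U^p$ spaces that follows from the atomic characterization.

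The $Y$-norm of $\eta K(t) v_0$ is handled piece by piece. The $\ell^2_{N\geq 2} U^2_K$ part is identical to the Schr\"{o}dinger argument with $S_\lambda$ replaced by $K$ and the weight $N^{-3/16}$ replaced by $N^{-3/4}$. The low-frequency piece is controlled by Minkowski and unitarity of $K$ on $L^2$,
$$\|P_1 \eta K(t) v_0\|_{L^2_x L^\infty_t} \leq \|\eta\|_\infty \sup_t \|P_1 K(t) v_0\|_{L^2_x} = \|\eta\|_\infty \|P_1 v_0\|_{L^2} \lesssim \|v_0\|_{H^{-3/4}},$$
using the compact $t$-support of $\eta$ and the fact that at low frequencies $\|P_1 f\|_{L^2} \sim \|P_1 f\|_{H^{-3/4}}$.

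The only step requiring a moment of care is the absorption of the temporal cutoff $\eta$ into the $U^2$ norms; however this is a routine operation, consistent with the author's remark that the lemma follows directly from the definition of $U^2$ together with the maximal function estimate.
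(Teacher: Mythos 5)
Your treatment of the $U^2$ components is fine and matches what the paper intends: the free evolution of an $L^2$ datum is a single $U^2$-atom, the Littlewood--Paley projections commute with the propagators, and multiplication by the fixed cutoff $\eta$ is bounded on $U^2$ by the atomic characterization. The paper gives no written proof beyond the remark that the lemma follows ``due to the maximal function estimate and the definition of $U^2$,'' and for those pieces your argument is exactly the intended one.

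The low-frequency piece of the $Y$-norm, however, contains a genuine error. You claim
$$\|P_1\,\eta K(t)v_0\|_{L^2_xL^\infty_t}\leq \|\eta\|_\infty \sup_t\|P_1K(t)v_0\|_{L^2_x}$$
``by Minkowski and unitarity,'' but Minkowski's integral inequality runs in the opposite direction: for exponents $2\leq\infty$ one has $\|f\|_{L^\infty_tL^2_x}\leq\|f\|_{L^2_xL^\infty_t}$, so the mixed norm $L^2_xL^\infty_t$ (an $L^2_x$ norm of the pointwise-in-$x$ maximal function $\sup_t|f(x,t)|$) is the \emph{stronger} one and cannot be controlled by $\sup_t\|f(\cdot,t)\|_{L^2_x}$. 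This is precisely why the paper invokes a \emph{maximal function estimate} here rather than unitarity. The gap is fixable for the frequency-localized piece: since $\eta$ has compact time support and $\partial_t K(t)P_1v_0=-\partial_x^3K(t)P_1v_0$ is bounded on frequencies $\lesssim 1$, a Sobolev embedding in $t$ gives $\|\eta K(t)P_1v_0\|_{L^2_xL^\infty_t}\lesssim\|\eta K(t)P_1v_0\|_{L^2_{t,x}}+\|\partial_t(\eta K(t)P_1v_0)\|_{L^2_{t,x}}\lesssim\|P_1v_0\|_{L^2}$; alternatively one cites the standard local-in-time maximal function estimate for the Airy group. As written, though, the step you rely on is an inequality that is simply false.
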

		Then, we show the bilinear estimate related to the Schr\"{o}dinger equation.
		\begin{lemma}\label{uvtou}
			$\|\eta\mathscr{A}_\lambda(uv)\|_{X_{\lambda}}\lesssim \lambda^{1/2} \|u\|_{X_{\lambda}}\|v\|_{Z}$.
		\end{lemma}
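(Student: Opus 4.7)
The strategy is the standard duality-plus-frequency-localisation method for bilinear estimates in the $U^2$--$V^2$ framework. By the atomic structure of $U^2_{S_\lambda}$ and $U^2$--$V^2$ duality applied to $\mathscr{A}_\lambda$, it suffices to establish, for each dyadic $N_3$ and each $\phi_{N_3}$ at frequency $N_3$ with $\|\phi_{N_3}\|_{V^2_{S_\lambda}}\le 1$,
\begin{equation*}
\Bigl|\int\eta(t)\,u(t,x)\,v(t,x)\,\overline{\phi_{N_3}(t,x)}\,dt\,dx\Bigr|\lesssim \lambda^{-1/2}\,c_{N_3}\,\|u\|_{X_\lambda}\|v\|_Z,\qquad \{c_{N_3}\}\in\ell^2,
\end{equation*}
since the prefactor $\lambda$ in the definition of $\mathscr{A}_\lambda$ then delivers the advertised $\lambda^{1/2}$. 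Littlewood--Paley decomposing $u=\sum_{N_1}P_{N_1}u$ and $v=\sum_{N_2}P_{N_2}v$, and writing $\phi_{N_3}=P_{N_3}\phi_{N_3}$, the analysis splits into three regimes: (A) $N_2\lesssim N_1\sim N_3$ (``$v$ low''), (B) $N_1\ll N_2\sim N_3$ (``$u$ low''), and (C) $N_1\sim N_2\gg N_3$ (``high--high to low'').

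In (A) with $N_2\lesssim 1$, I would place $P_{\lesssim 1}v$ in the $L^2_xL^\infty_t$ component of $\|v\|_Z$ and pair it by H\"older with $\|P_{N_1}u\cdot\overline{P_{N_3}\phi_{N_3}}\|_{L^2_xL^1_t}$, the latter being controlled by the $1$D Schr\"odinger local smoothing $\|P_{N_1}u\|_{L^\infty_xL^2_t}\lesssim \lambda^{-1/2}N_1^{-1/2}\|P_{N_1}u\|_{U^2_{S_\lambda}}$ together with $V^2_{S_\lambda}\hookrightarrow L^\infty_tL^2_x$. In~(A) with $N_2\gg 1$, I would instead use the $L^\infty_xL^2_t$ component $N_2^{1/4}\|P_{N_2}v\|$ of $Z$ and bound $\|P_{N_1}u\cdot\overline{P_{N_3}\phi_{N_3}}\|_{L^1_xL^2_t}$ by the $1$D bilinear Schr\"odinger estimate, transferred from free solutions to $U^2_{S_\lambda}\times V^2_{S_\lambda}$, which gains $(\lambda N_1)^{-1/2}N_2^{1/2}$. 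Regime~(B) is handled symmetrically, now applying the bilinear Schr\"odinger estimate to the product $P_{N_1}u\cdot\overline{P_{N_3}\phi_{N_3}}$ whose output frequency is $\sim N_2$, and pairing with the $V^2_K\hookrightarrow L^\infty_tL^2_x$ component of $\|v\|_Z$ carrying the weight $N_2^{3/4}$. Regime~(C) is a variant of~(A): the bilinear Schr\"odinger product in $L^2_{t,x}$ gives the dominant gain, and $v$ is placed in the local smoothing component of $Z$.

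\textbf{Main obstacle.} Plugging in the weights $N_1^{-3/16}$, $N_3^{-3/16}$ from $X_\lambda$ and the appropriate weight on $P_{N_2}v$ from $Z$, together with the $(\lambda N_{\max})^{-1/2}$ and the $N_{\min}^{1/2}$ factors from the Schr\"odinger local smoothing and bilinear estimates, the resulting powers of $N_1,N_2,N_3$ come out at exactly the summability threshold---which is precisely what makes $H^{-3/16}\times H^{-3/4}$ the ``double critical'' space. The delicate step is therefore to sum without logarithmic loss: one must keep the outer $\ell^2_{N_3}$ norm intact, apply Cauchy--Schwarz only against those dyadic parameters whose weighted exponent is strictly negative, and exploit the three-component structure of $\|\cdot\|_Z$ to pick the optimal norm for $v$ in each regime. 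The $\lambda$ bookkeeping is comparatively mechanical: the $\lambda$ from $\mathscr{A}_\lambda$ combines with the $\lambda^{-1/2}$ produced by each instance of local smoothing or bilinear Schr\"odinger to yield $\lambda^{1/2}$.
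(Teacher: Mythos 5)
Your plan captures the easy parts of this lemma (the $P_1v$ contribution via maximal function on $v$ and local smoothing on $u$ is exactly what the paper does), but it misidentifies the central difficulty and therefore has a genuine gap. The key point is that $v$ is a \emph{KdV} wave while $u$ and the dual function are \emph{Schr\"odinger} waves, so the relevant trichotomy is not the paraproduct one ($N_2\lesssim N_1\sim N_3$, etc.) but the resonance dichotomy $N_2^2\sim\lambda N_1$ versus $N_2^2\nsim\lambda N_1$: for $\xi_1+\xi_2=\xi$ the resonance function is $\xi_2(\xi_2^2+2\lambda\xi_1+\lambda\xi_2)$, so the total modulation is bounded below by $N_2\max\{N_2^2,\lambda N_1\}$ only off the set $N_2^2\sim\lambda N_1$. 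In the nonresonant regime the paper's workhorse is precisely this high-modulation gain combined with $L^4_{t,x}$ Strichartz bounds; your proposal never invokes a modulation decomposition, and the substitutes you list do not suffice. In particular, the bilinear Schr\"odinger estimate for $P_{N_1}u\,\overline{P_{N_3}\phi}$ with $N_1\sim N_3$ gains only $(\lambda N_2)^{-1/2}$ ($N_2$ being the \emph{output} frequency of that product), and pairing this with $\|P_{N_2}v\|_{L^\infty_tL^2_x}\lesssim N_2^{3/4}\|v\|_Z$ leaves a growing factor $\lambda^{-1/2}N_2^{1/4}$ that is not summable over $N_2\lesssim N_1\sim N_3$; the claimed $(\lambda N_1)^{-1/2}N_2^{1/2}$ gain for $\|P_{N_1}u\,\overline{P_{N_3}\phi}\|_{L^1_xL^2_t}$ is not a standard estimate and I do not see how to obtain it.

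The gap is most acute on the resonant set $N_2^2\sim\lambda N_1\sim\lambda N_3$ (which sits inside your regime (A), with $N_2\ll N_1$ since $\lambda\ll1$). There the modulation weight has no useful lower bound, so neither high-modulation arguments nor your bilinear Schr\"odinger bound can close the estimate. The paper handles this case with a dedicated Schr\"odinger--KdV bilinear (transversality) estimate from the companion paper \cite{banchenzhang} (Lemma 4.12, (4.8)), giving a gain of $N_2^{-1}$ in $U^2_{S_\lambda}\times U^2_K\times U^2_{S_\lambda}$, and then transfers it to $V^2_K$ by interpolating against an $L^3_{t,x}$ Strichartz bound, at the cost of $(\log N_2)^3$. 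Relatedly, your stated ``main obstacle'' --- summing at the critical threshold without logarithmic loss --- is not the actual obstruction: the paper freely incurs factors $\log\lambda^{-1}$ and $(\log N_2)^3$ and absorbs them with the spare powers $\lambda^{3/4}$, $\lambda^{5/6}$ available in those regimes (only the $P_1v$ term saturates the $\lambda^{1/2}$ in the statement). Without the modulation analysis in the nonresonant case and the transversality input in the resonant case, the proof does not close.
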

		\begin{proof}[\textbf{Proof.}]
		Firstly, for low frequency part of $v$ we have
		\begin{equation}\label{uvlow}
			\begin{aligned}
			\left|\int_{\mathbb{R}^2}P_{N} (uP_1v)\tilde{\eta}^2\bar{w}~dxdt\right|
			&\lesssim \sum_{N_1\sim N}\|\tilde{\eta}P_{N_1} u\|_{L_x^\infty L_t^2}\|P_1 v\|_{L_x^2L_t^\infty}\|\tilde{\eta}w\|_{L_{t,x}^2}\\
			&\lesssim (\lambda N)^{-1/2}\sum_{N_1\sim N}\|P_{N_1} u\|_{U^2_{S_\lambda}}\|v\|_{Z}\|w\|_{V^2_{S_\lambda}}.
			\end{aligned}
		\end{equation}
		By duality one has
		\begin{align*}
				\|\eta\mathscr{A}_\lambda(uP_1v)\|_{X_\lambda}&\lesssim \lambda\sum_{N}N^{-3/16}(\lambda N)^{-1/2}\sum_{N_1\sim N}\|P_{N_1} u\|_{U^2_{S_\lambda}}\|v\|_{Z}\\
				&\lesssim \lambda^{1/2}\|u\|_{X_\lambda}\|v\|_Z.
		\end{align*}
		Then by triangle inequality,
		\begin{align*}
			\|\eta\mathscr{A}_\lambda(uP_{>1}v)\|_{X_\lambda}\leq \sum_N\sum_{N_1}\sum_{N_2\geq 2}N^{-3/16}\|\eta P_{N}\mathscr{A}_\lambda(P_{N_1}uP_{N_2}v)\|_{U^2_{S_\lambda}}.
		\end{align*}
		If $N_2^2\nsim \lambda N_1$, for $\tau_1+\tau_2 = \tau$, $\xi_1+\xi_2 = \xi$, $|\xi_1|\sim N_1$, $|\xi_2|\sim N_2$, $|\xi|\sim N$ we have
		\begin{align*}
			|\tau_1+\lambda\xi_1^2|+|\tau_2-\xi^3_2|+|\tau+\lambda\xi^2|\gtrsim N_2\max\{N_2^2,\lambda N_1\}.
		\end{align*}
		Let $L = cN_2\max\{N_2^2,\lambda N_1\}$. Typically we need to estimate
		\begin{align*}
			I&=\int_{\mathbb{R}^2}Q_{>L}^{S_\lambda}P_{N_1}uP_{N_2}vP_N\bar{w}~dxdt,\\
			II&=\int_{\mathbb{R}^2}P_{N_1}uQ_{>L}^{K}P_{N_2}vP_N\bar{w}~dxdt
		\end{align*}
		where $u, v, w$ are supported on $[-1,1]\times \mathbb{R}$.
		By the Strichartz estimate one has
		$$\|u\|_{L_{t,x}^6}\lesssim \lambda^{-1/6}\|u\|_{U^2_{S_\lambda}}\lesssim \lambda^{-1/6}\|u\|_{V^2_{S_\lambda}}, ~\|u\|_{L_{t,x}^2}\lesssim \|u\|_{L_t^\infty L_x^2}\lesssim \|u\|_{V^2_{S_\lambda}}.$$
		Thus $\|u\|_{L_{t,x}^4}\leq \|u\|_{L_{t,x}^6}^{3/4}\|u\|_{L_{t,x}^2}^{1/4}\lesssim \lambda^{-1/8}\|u\|_{V^2_{S_\lambda}}$. Similarly,
		$$\|P_{N_2}v\|_{L_{t,x}^4}\lesssim N_2^{-1/8}\|v\|_{V^2_K}, \quad \|P_Nw\|_{L_{t,x}^4}\lesssim \lambda^{-1/8}\|w\|_{V^2_K}.$$
		By the H\"{o}lder inequality we have
		\begin{align*}
			|I|&\lesssim \|Q_{>L}^{S_\lambda}P_{N_1}u\|_{L_{t,x}^2}\|P_{N_2}v\|_{L_{t,x}^4}\|P_Nw\|_{L_{t,x}^4}\\
			&\lesssim L^{-1/2}\lambda^{-1/8}\|u\|_{V^2_{S_\lambda}}N_2^{-1/8}\|v\|_{V^2_K}\|w\|_{V^2_K}\\
			&\lesssim \lambda^{-1/8}N_2^{-5/8}\max\{N_2^2,\lambda N_1\}^{-1/2}\|u\|_{U^2_{S_\lambda}}\|v\|_{V^2_K}\|w\|_{V^2_K}.
		\end{align*}
		Also,
		\begin{align*}
			|II|\lesssim \lambda^{-1/4}N_2^{-1/2}\max\{N_2^2,\lambda N_1\}^{-1/2}\|u\|_{U^2_{S_\lambda}}\|v\|_{V^2_K}\|w\|_{V^2_K}.
		\end{align*}
		Thus by the duality one has
		\begin{equation}\label{nonreso}			
			\begin{aligned}
				&\quad\|\eta P_{N}\mathscr{A}_\lambda(P_{N_1}uP_{N_2}v)\|_{U^2_{S_\lambda}}\\
				&\lesssim \lambda\lambda^{-1/4}N_2^{-1/2}\max\{N_2^2,\lambda N_1\}^{-1/2}\|P_{N_1}u\|_{U^2_{S_\lambda}}\|P_{N_2}v\|_{V^2_K}\\
				&\lesssim \lambda^{3/4}N_1^{3/16}N_2^{1/4}\max\{N_2^2,\lambda N_1\}^{-1/2}\|u\|_{X_\lambda}\|v\|_{Z}.
			\end{aligned}
		\end{equation}
		Let $N_{\max}, N_{\mathrm{med}}$ be the maximum, medium of $N_1,N_2,N$. Note that
		$$\sum_{N_{\max}\sim N_{\mathrm{med}}}N^{-3/16}\lambda^{3/4}N_1^{3/16}N_2^{1/4}\max\{N_2^2,\lambda N_1\}^{-1/2}\lesssim \lambda^{3/4}\log\lambda^{-1}.$$
		Thus we obtain
		$$\sum_{N_2\geq 2, N_2^2\nsim \lambda N_1,N_1,N}N^{-3/16}\|\eta P_{N}\mathscr{A}_\lambda(P_{N_1}uP_{N_2}v)\|_{U^2_{S_\lambda}}\lesssim \frac{\log\lambda^{-1}}{\lambda^{-3/4}}\|u\|_{X_\lambda}\|v\|_Z.$$
		If $N_2^2\sim \lambda N_1\sim \lambda N$, by Lemma 4.12, (4.8) in \cite{banchenzhang}, we have
		\begin{align*}
			\left|\int_{\mathbb{R}^2} P_{N_1}uP_{N_2}vP_{N}\bar{w}~dxdt\right|\lesssim N_2^{-1}\|u\|_{U^2_{S_\lambda}}\|v\|_{U^2_K}\|w\|_{U^2_{S_\lambda}}.
		\end{align*}
		On the other hand, by H\"{o}lder's inequality and Strichartz estimates, one has
		\begin{align*}
			\left|\int_{\mathbb{R}^2} P_{N_1}uP_{N_2}vP_{N}\bar{w}~dxdt\right|&\lesssim \|u\|_{L_{t,x}^3}\|P_{N_2}v\|_{L_{t,x}^3}\|w\|_{L_{t,x}^3}\\
			&\lesssim \|u\|_{L_{t}^{12}L_{x}^3}\|P_{N_2}v\|_{L_{t}^{12}L_{x}^3}\|w\|_{L_{t}^{12}L_{x}^3}\\
			&\lesssim \lambda^{-1/6}N_2^{-1/12}\|u\|_{U^{12}_{S_\lambda}}\|v\|_{U^{12}_K}\|w\|_{U^{12}_{S_\lambda}}.
		\end{align*}
		Then by the interpolation (cf.  Proposition 2.20 in \cite{hadac2009well}), we have
		\begin{align*}
			\left|\int_{\mathbb{R}^2} P_{N_1}uP_{N_2}vP_{N}\bar{w}~dxdt\right|\lesssim N_2^{-1}\lambda^{-1/6}(\log N_2)^3\|u\|_{U^2_{S_\lambda}}\|v\|_{V^2_K}\|w\|_{V^2_{S_\lambda}}.
		\end{align*}
		By the duality one has
		\begin{equation}\label{reslab}
			\begin{aligned}
				\|\eta P_N\mathscr{A}_\lambda(P_{N_1}uP_{N_2}v)\|_{U^2_{S_\lambda}}&\lesssim \lambda N_2^{-1}\lambda^{-1/6}(\log N_2)^3\|P_{N_1}u\|_{U^2_{S_\lambda}}\|P_{N_2}v\|_{V^2_K}\\
				&\lesssim \lambda^{5/6}N_2^{-1/4}N_1^{3/16}(\log N_2)^3\|u\|_{X_\lambda}\|v\|_{Z}.
			\end{aligned}
		\end{equation}
		Note that
		$$\sum_{N_2^2\sim \lambda N_1\sim \lambda N}N^{-3/16}N_2^{-1/4}N_1^{3/16}\lambda^{5/6}(\log N_2)^3\lesssim \lambda^{5/6}.$$
		Overall one has
		\begin{align*}
			&\quad\sum_N\sum_{N_1}\sum_{N_2\geq 2}N^{-3/16}\|\eta P_{N}\mathscr{A}_\lambda(P_{N_1}uP_{N_2}v)\|_{U^2_{S_\lambda}}\lesssim \lambda^{1/2}\|u\|_{X_\lambda}\|v\|_{Z}.
		\end{align*}
		We finish the proof of this lemma.
		\end{proof}
		Next we show the trilinear estimate related to the KdV equation.
		\begin{lemma}\label{uvwtov}
			$\|\eta\mathscr{B}(\mathscr{A}_\lambda(uv)\bar{w})\|_{Y}\lesssim \lambda^{-1/2} \|u\|_{X_{\lambda}}\|v\|_{Z}\|w\|_{X_{\lambda}}$.
		\end{lemma}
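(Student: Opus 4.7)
The approach is to reduce, via Lemma \ref{uvtou} applied to the inner Duhamel, to a bilinear KdV-type estimate, and then handle that by the $U^p$-$V^p$ duality framework together with a modulation-resonance analysis parallel to the proof of Lemma \ref{uvtou}. Concretely, set $\tilde u:=\eta\mathscr{A}_\lambda(uv)$; Lemma \ref{uvtou} gives $\|\tilde u\|_{X_\lambda}\lesssim \lambda^{1/2}\|u\|_{X_\lambda}\|v\|_Z$, so it suffices to establish
\begin{equation*}
\|\eta\mathscr{B}(\tilde u\,\bar w)\|_Y \lesssim \lambda^{-1}\|\tilde u\|_{X_\lambda}\|w\|_{X_\lambda}
\end{equation*}
for all $\tilde u,w\in X_\lambda$.

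I would split $\|\cdot\|_Y$ into its low-frequency piece $\|P_1(\cdot)\|_{L^2_xL^\infty_t}$, handled directly by the maximal function estimate for the KdV Duhamel combined with Kato local smoothing applied to the two Schr\"odinger-type factors, and its high-frequency piece $\|N^{-3/4}P_N(\cdot)\|_{l^2_{N\geq 2}U^2_K}$. For the latter, apply $U^2_K$-$V^2_K$ duality to reduce the $P_N$ component to bounding a trilinear integral
\begin{equation*}
I_N := \iint P_N(\tilde u\,\bar w)\,\partial_x \tilde g\,dx\,dt,\qquad \tilde g\in V^2_K,\ \|\tilde g\|_{V^2_K}\leq 1,\ \tilde g=P_N\tilde g.
\end{equation*}

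Dyadically decompose $\tilde u,w$ at scales $N_1,N_3$. Since $\tilde u$ is Schr\"odinger-like and $\bar w$ anti-Schr\"odinger-like, computing the KdV-Schr\"odinger resonance function one finds that, away from the degenerate locus $\xi_1+\xi_3\approx 0$, at least one of the three modulations $|\tau_1+\lambda\xi_1^2|,|\tau_3-\lambda\xi_3^2|,|\tau-\xi^3|$ is forced to be comparable to $N\cdot\max\{N^2,\lambda\max(N_1,N_3)\}$. In each non-degenerate subcase, paying $L^{-1/2}$ for the large-modulation cutoff and invoking a bilinear Schr\"odinger/conjugate-Schr\"odinger $L^2_{t,x}$ estimate (which gains $\lambda^{-1/2}\max(N_1,N_3)^{-1/2}$ from the transversality of the surfaces $\tau=\mp\lambda\xi^2$), combined with an $L^4_{t,x}$ Strichartz bound on the KdV factor $\tilde g$, yields after summation against the weights $N^{-3/4}N_1^{-3/16}N_3^{-3/16}$ a total constant of order $\lambda^{-1}$.

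The main obstacle is the resonant regime $N_1\sim N_3\gg N$, where all three modulations can be simultaneously small and the Schr\"odinger/anti-Schr\"odinger transversality degenerates. I would handle this by mirroring the resonant step \eqref{reslab} from the proof of Lemma \ref{uvtou}: appeal to the trilinear resonance estimate of Lemma~4.12 and (4.8) of \cite{banchenzhang} in the $U^2$ norms, pair it with an $L^{12}_tL^3_x$ Strichartz estimate in the $U^{12}$ norms, and then interpolate via Proposition~2.20 of \cite{hadac2009well} to pass to $V^2$, absorbing the resulting logarithmic factor into the $\lambda^{-1}$ constant. Composing the resulting bilinear KdV estimate with the $\lambda^{1/2}$ factor from Lemma \ref{uvtou} produces the target bound $\lambda^{-1/2}\|u\|_{X_\lambda}\|v\|_Z\|w\|_{X_\lambda}$.
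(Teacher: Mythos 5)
There is a genuine gap, and it sits exactly at your opening reduction. You propose to treat $\tilde u=\eta\mathscr{A}_\lambda(uv)$ as a generic element of $X_\lambda$ and prove a clean bilinear estimate $\|\eta\mathscr{B}(\tilde u\,\bar w)\|_Y\lesssim \lambda^{-1}\|\tilde u\|_{X_\lambda}\|w\|_{X_\lambda}$. This composition does not close in the resonant regime. The resonance function for $\partial_x(\tilde u\,\bar w)$ tested against a KdV piece at frequency $\xi=\xi_1-\xi_2$ is $\xi(\xi^2+\lambda(\xi_1+\xi_2))$, so the degenerate locus is $N^2\sim\lambda N_1\sim\lambda N_2$ (not $\xi_1+\xi_3\approx 0$, where the output frequency is large and the resonance function is of size $N^3$). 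On that locus the only available trilinear bound, Lemma~4.12, (4.8) of \cite{banchenzhang} interpolated to $V^2$, carries \emph{no} frequency decay: it gives $\|\eta P_N\mathscr{B}(P_{N_1}\tilde u P_{N_2}\bar w)\|_{U^2_K}\lesssim \log(N/\lambda)\|P_{N_1}\tilde u\|_{U^2_{S_\lambda}}\|P_{N_2}w\|_{U^2_{S_\lambda}}$. Feeding in only $\|P_{N_1}\tilde u\|_{U^2_{S_\lambda}}\leq N_1^{3/16}\|\tilde u\|_{X_\lambda}$, the dyadic sum becomes $\sum_N N^{-3/4}N_1^{3/16}N_2^{3/16}\log(N/\lambda)$ with $N_1\sim N_2\sim N^2/\lambda$, i.e.\ $\sum_N \lambda^{-3/8}\log(N/\lambda)$, which diverges. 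The logarithm is not the problem; the absence of any decay in $N$ is, so it cannot be ``absorbed into the constant.''

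The paper's proof avoids this precisely by \emph{not} black-boxing Lemma \ref{uvtou}: it derives the frequency-localized refinement \eqref{refinuvtou}, $\|\eta P_{N_1}\mathscr{A}_\lambda(uP_{>1}v)\|_{U^2_{S_\lambda}}\lesssim(\lambda^{9/16}+\lambda^{17/24}N_1^{1/16}(\log N_1)^3)\|u\|_{X_\lambda}\|v\|_Z$, which gains a factor of roughly $N_1^{-1/8}$ over the generic $X_\lambda$ bound because producing $\mathscr{A}_\lambda(uv)$ at high output frequency costs derivatives on the inputs. That gain, inserted into the resonant sum in \eqref{reson}, turns the summand into $N^{-1/4}$ (up to logs) and yields $\lambda^{3/8}$. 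Separately, the contribution of $P_1 v$ must be split off and treated with the refined low-frequency bound \eqref{uvlow} (which gains $(\lambda N_1)^{-1/2}$), since it is not covered by \eqref{refinuvtou}. Your treatment of the non-resonant regimes and of the low-frequency output is broadly consistent with the paper's (which there obtains $\lambda^{-7/8}$, even better than your claimed $\lambda^{-1}$), but without the refinement \eqref{refinuvtou} the resonant case cannot be summed, so the proposed proof does not go through as written.
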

		\begin{proof}[\textbf{Proof}]
			Without loss of the generality we would assume that $u, v, w$ are supported on $[-1,1]\times \mathbb{R}$. For the low frequency we have
			\begin{align*}
				\|\eta P_1\mathscr{B}(\mathscr{A}_\lambda(uv)\bar{w})\|_{L_x^2 L_t^\infty}&\lesssim \|P_1(\mathscr{A}(uv)\bar{w})\|_{L_{x}^2 L_t^1}\\
				&\lesssim \sum_{N_1\sim N_2}\|P_1(P_{N_1}\mathscr{A}_\lambda(uv)P_{N_2}\bar{w})\|_{L_{x}^2 L_t^1}\\
				&\lesssim \sum_{N_1\sim N_2}\|\tilde{\eta}P_{N_1}\mathscr{A}_\lambda(uv)\|_{L_{t,x}^2}\|P_{N_2}w\|_{L_{x}^\infty L_t^2}\\
				&\lesssim \sum_{N_1\sim N_2}\|\tilde{\eta}P_{N_1}\mathscr{A}_\lambda(uv)\|_{U^2_{S_\lambda}}(\lambda N_2)^{-1/2}\|P_{N_2}w\|_{U^2_{S_\lambda}}\\
				&\lesssim \lambda^{-1/2}\sum_{N_1\sim N_2}N_1^{3/16}N_2^{-5/16}\|\tilde{\eta}\mathscr{A}_\lambda(uv)\|_{X_\lambda}\|w\|_{X_\lambda}\\
				&\lesssim \lambda^{-1/2}\|\tilde{\eta}\mathscr{A}_\lambda(uv)\|_{X_\lambda}\|w\|_{X_\lambda}.
			\end{align*}
			By Lemma \ref{uvtou} we obtain
			\begin{equation}\label{lowoutput}
				\begin{aligned}
					\|\eta P_1\mathscr{B}(\mathscr{A}_\lambda(uv)\bar{w})\|_{L_x^2 L_t^\infty}\lesssim  \|u\|_{X_\lambda}\|v\|_Z\|w\|_{X_\lambda}.
				\end{aligned}
			\end{equation}
%			By the definition of $Y$-norm, we still need to estimate
%			$$\|\eta N^{-3/4}P_N\mathscr{B}(\mathscr{A}(uv)\bar{w})\|_{l^2_{N\geq 2}U^2_K}.$$
%			By frequency decomposition one has
%			\begin{align*}
%				&\quad\|\eta P_N\mathscr{B}(\mathscr{A}(uv)\bar{w})\|_{U^2_K}\\
%				&\lesssim \sum_{N_1,N_2}\|\eta P_N\mathscr{B}(P_{N_1}\mathscr{A}(uP_{1}v)P_{N_2}\bar{w})\|_{U^2_K}\sum_{N_1,N_2}\|\eta P_N\mathscr{B}(P_{N_1}\mathscr{A}(uP_{>1}v)P_{N_2}\bar{w})\|_{U^2_K}.
%			\end{align*}
			Since $\|P_N Q_L^Kv\|_{L_x^\infty L_{t}^2}\lesssim N^{-1}L^{1/2}\|P_N Q_L^Kv\|_{L_{t,x}^2}$, by the interpolation one has
			$$\|P_NQ_L^K v\|_{L_x^{2/(1-\theta)}L_t^2}\lesssim (N^{-1}L^{1/2})^{\theta}\|P_NQ_L^K v\|_{L^2_{t,x}},\quad \forall~0<\theta<1.$$
			Thus ($v$ is supported on $[-1,1]\times \mathbb{R}$)
			\begin{align*}
				\|P_N v\|_{L_x^{2/(1-\theta)}L_t^2} & \lesssim \sum_L (N^{-1}L^{1/2})^{\theta}\|P_NQ_L^K v\|_{L^2_{t,x}}\\
					&\lesssim \sum_L (N^{-1}L^{1/2})^{\theta} L^{-1/2}\|P_Nv\|_{V^2_K}\lesssim N^{-\theta}\|P_Nv\|_{V^2_K}.
			\end{align*}
			If $N_2\lesssim N$ (which implies $N_1\lesssim N$), by the duality, Minkowski, and Bernstein inequalities one has
			\begin{align*}
				&\quad\sum_{N_1,N_2\lesssim N}\|\eta P_N\mathscr{B}(\mathscr{A}_\lambda(P_{N_1}uP_{1}v)P_{N_2}\bar{w})\|_{U^2_K}\\
				&\lesssim \sum_{N_1,N_2\lesssim N} NN^{-\theta}\|P_{N_1}\mathscr{A}_\lambda(uP_1 v)P_{N_2}w\|_{L_x^{2/(1+\theta)}L_t^2}\\
				&\lesssim \sum_{N_1,N_2\lesssim N}N^{1-\theta}\|\tilde{\eta}P_{N_1}\mathscr{A}_\lambda(uP_1 v)\|_{L_x^{2/\theta}L_t^2}\|\tilde{\eta}P_{N_2}w\|_{L_x^2 L_t^\infty}\\
				&\lesssim N^{3/2-\theta}\sum_{N_1,N_2\lesssim N}(\lambda N_1)^{(\theta-1)/2}\|\tilde{\eta}P_{N_1}\mathscr{A}_\lambda(uP_1 v)\|_{U^2_{S_\lambda}}\|P_{N_2}w\|_{U^2_{S_\lambda}}.
			\end{align*}
			By \eqref{uvlow} one has $\|\tilde{\eta}P_{N_1}\mathscr{A}_\lambda(uP_1 v)\|_{U^2_{S_\lambda}}\lesssim \lambda (\lambda N_1)^{-1/2}N_1^{3/16}\|u\|_{X_\lambda}\|v\|_{Z}$. Thus we can control the former term by
			\begin{align*}
				 N^{3/2-\theta}\sum_{N_1, N_2\lesssim N}(\lambda N_1)^{(\theta-1)/2}\lambda(\lambda N_1)^{-1/2}N_1^{3/16}\|u\|_{X_\lambda}\|v\|_{Z}N_2^{3/16}\|w\|_{X_\lambda},
			\end{align*}
			By choosing $15/16<\theta<1$, then
			$$\lambda\sum_{N}N^{-3/4}N^{3/2-\theta}\sum_{N_1, N_2\lesssim N}(\lambda N_1)^{(\theta-1)/2}(\lambda N_1)^{-1/2}N_1^{3/16}N_2^{3/16}\lesssim \lambda^{\theta/2}.$$
			We obtain the desired estimate since we always assume  $0<\lambda\ll 1$.
			
			If $N_2\gg N$ (which implies $N_1\sim N_2$) similar to the former argument, we have $\|P_{N}v\|_{L_{t,x}^{30/7}}\lesssim N^{-2/15}\|v\|_{V^2_K}$. Then by the duality one has
			\begin{align*}
				&\quad\sum_{N_1\sim N_2\gg N}\|\eta P_N\mathscr{B}(\mathscr{A}_{\lambda}(P_{N_1}uP_{1}v)P_{N_2}\bar{w})\|_{U^2_K}\\
				&\lesssim \sum_{N_1\sim N_2\gg N} NN^{-2/15}\|\mathscr{A}_{\lambda}(P_{N_1}uP_1 v)P_{N_2}w\|_{L_{t,x}^{30/23}}\\
				&\lesssim \sum_{N_1\sim N_3\gg N} N^{13/15}\|\tilde{\eta}\mathscr{A}_{\lambda}(P_{N_1}uP_1 v)\|_{L_{t,x}^2 }\|P_{N_2}w\|_{L_{t,x}^2}^{3/10}\|P_{N_2}w\|_{L_{t,x}^6}^{7/10}\\
				&\lesssim \lambda^{53/60}\sum_{N_1\sim N_2\gg N} N^{13/15}\|P_{N_1}u\|_{L^\infty_xL_t^2}\|P_1v\|_{L_x^2 L_t^\infty}\|P_{N_2}w\|_{U^2_{S_\lambda}}\\
				&\lesssim \lambda^{53/60}N^{13/15}\sum_{N_1\sim N_2\gg N}N_2^{3/16}(\lambda N_1)^{-1/2}N_1^{3/16}\|u\|_{X_\lambda}\|v\|_Z\|w\|_{X_\lambda}\\
				&\lesssim \lambda^{23/60}N^{89/120}\|u\|_{X_\lambda}\|v\|_Z\|w\|_{X_\lambda}.
			\end{align*}
			Note that $89/120<3/4$. Thus we have
			\begin{equation}\label{lowofv}
				\begin{aligned}
					&\quad \sum_{N\geq 2}N^{-3/4}\sum_{N_1\sim N_2\gg N}\|\eta P_N\mathscr{B}(\mathscr{A}_{\lambda}(P_{N_1}uP_{1}v)P_{N_2}\bar{w})\|_{U^2_K}\\
					&\lesssim \lambda^{23/60}\|u\|_{X_\lambda}\|v\|_Z\|w\|_{X_\lambda}.
				\end{aligned}
			\end{equation}
			By \eqref{nonreso}, for $M_2^2\nsim \lambda M_1$ we have
			\begin{align*}
				\|\eta P_{N_1}\mathscr{A}_{\lambda}(P_{M_1}uP_{M_2}v)\|_{U^2_{S_\lambda}}\lesssim \frac{\lambda^{3/4}M_1^{3/16}M_2^{1/4}}{\max\{M_2^2,\lambda M_1\}^{1/2}}\|u\|_{X_\lambda}\|v\|_{Z}.
			\end{align*}
			By \eqref{reslab}, for $M_2^2\sim \lambda M_1$ we have
			\begin{align*}
				\|\eta P_{N_1}\mathscr{A}_{\lambda}( P_{M_1}uP_{M_2}v)\|_{U^2_{S_\lambda}}\lesssim\lambda^{5/6} M_2^{-1/4}M_1^{3/16}(\log(M_2/\lambda))^3\|u\|_{X_\lambda}\|v\|_{Z}.
			\end{align*}
			Thus,
			\begin{equation}\label{refinuvtou}
				\begin{aligned}
					&\quad\|\eta P_{N_1}(\mathscr{A}_{\lambda}(u P_{>1}v))\|_{U^2_{S_\lambda}}\\
					&\lesssim \sum_{M_2^2\nsim \lambda M_1} \frac{\lambda^{3/4}M_1^{3/16}M_2^{1/4}}{\max\{M_2^2,\lambda M_1\}^{1/2}}\|u\|_{X_\lambda}\|v\|_{Z}\\
					&\quad +\sum_{M_2^2\sim \lambda M_1\sim \lambda N_1}\lambda^{5/6}M_2^{-1/4}M_1^{3/16}(\log(M_2/\lambda))^3\|u\|_{X_\lambda}\|v\|_{Z}\\
					&\lesssim (\lambda^{9/16}+\lambda^{17/24}N_1^{1/16}(\log N_1)^3)\|u\|_{X_\lambda}\|v\|_{Z}.
				\end{aligned}
			\end{equation}
			Let $\tilde{u} = \eta\mathscr{A}_{\lambda}(u P_{>1} v)$. Now we estimate $\|\eta P_N\mathscr{B}(P_{N_1}\tilde{u}P_{N_2}\bar{w})\|_{U^2_K}$.	
%			By duality we need to estimate
%			\begin{align*}
%				\int_{\mathbb{R}^2}P_{N_1}\tilde{u}P_{N_2}\bar{w}P_NQ_L^K\bar{\tilde{v}}~dxdt.
%			\end{align*}

			If $N^2\ll \lambda N_1$ by Lemma 4.12, (4.7) in \cite{banchenzhang} one has
			$$\left|\int_{\mathbb{R}^2}\partial_x(P_{N_1}\tilde{u}P_{N_2}\bar{w})P_N\bar{\tilde{v}}~dxdt\right|\lesssim \lambda^{-1}N_1^{-1/2}\|\tilde{u}\|_{U^2_{S_\lambda}}\|w\|_{U^2_{S_\lambda}}\|v\|_{U^2_K}.$$
			On the other hand by the H\"{o}lder inequality we also have
			\begin{align*}
				\left|\int_{\mathbb{R}^2}\partial_x(P_{N_1}\tilde{u}P_{N_2}\bar{w})P_N\bar{\tilde{v}}~dxdt\right|&\lesssim N\|P_N(P_{N_1}\tilde{u}P_{N_2}\bar{w})\|_{L^2_{t,x}}\|\tilde{\eta}P_N\tilde{v}\|_{L^2_{t,x}}\\
				&\lesssim \lambda ^{-1/2}N^{1/2}\|\tilde{u}\|_{U^2_{S_\lambda}}\|w\|_{U^2_{S_\lambda}}\|v\|_{U^p_K},\quad p>2.
			\end{align*}
			Then by the interpolation theorem in \cite{hadac2009well} one has
			$$\left|\int_{\mathbb{R}^2}\partial_x(P_{N_1}\tilde{u}P_{N_2}\bar{w})P_N\bar{\tilde{v}}~dxdt\right|\lesssim \lambda^{-1}N_1^{-1/2}\log (N_1/\lambda)\|\tilde{u}\|_{U^2_{S_\lambda}}\|w\|_{U^2_{S_\lambda}}\|v\|_{V^2_K}.$$
			Combining Lemma \ref{uvtou}  we have
			\begin{equation}\label{N1large}
				\begin{aligned}
					&\quad\sum_{N\geq 2}N^{-3/4}\sum_{N_1\sim N_2\gg N^2/\lambda}\|\eta P_N\mathscr{B}(P_{N_1}\tilde{u}P_{N_2}\bar{w})\|_{U^2_K}\\
					&\lesssim \sum_{N\geq 2}N^{-3/4}\sum_{N_1\sim N_2\gg N^2/\lambda}\lambda^{-1}N_1^{-1/2}\log (N_1/\lambda)\|P_{N_1}\tilde{u}\|_{U^2_{S_\lambda}}\|P_{N_2}w\|_{U^2_{S_\lambda}}\\
					&\lesssim \sum_{N_1\sim N_2\gg 1/\lambda}\lambda^{-1}N_1^{-1/2}\log (N_1/\lambda)\|P_{N_1}\tilde{u}\|_{U^2_{S_\lambda}}\|P_{N_2}w\|_{U^2_{S_\lambda}}\\
					&\lesssim \lambda^{-7/8} \|\tilde{u}\|_{X_\lambda}\|w\|_{X_\lambda}\lesssim \lambda^{-3/8}\|u\|_{X_\lambda}\|v\|_Z\|w\|_{X_\lambda}.
				\end{aligned}
			\end{equation}
			Similarly if $N^2\gg \lambda N_1$ by Lemma 4.12, (4.6) in \cite{banchenzhang} and the interpolation one has
			$$\|\eta P_N\mathscr{B}(P_{N_1}\tilde{u}P_{N_2}\bar{w})\|_{U^2_K}\lesssim \lambda^{-1/2}N^{-1}\log (N/\lambda)\|\tilde{u}\|_{U^2_{S_\lambda}}\|w\|_{U^2_{S_\lambda}}.$$
			Combining Lemma \ref{uvtou}  we have
			\begin{equation}\label{Nrelarge}
				\begin{aligned}
					&\quad\sum_{N\geq 2}N^{-3/4}\sum_{N_1, N_2\ll N^2/\lambda}\|\eta P_N\mathscr{B}(P_{N_1}\tilde{u}P_{N_2}\bar{w})\|_{U^2_K}\\
					&\lesssim \sum_{N\geq 2}N^{-3/4}\sum_{N_1,N_2\ll N^2/\lambda}\lambda^{-1/2}N^{-1}\log (N/\lambda)\|P_{N_1}\tilde{u}\|_{U^2_{S_\lambda}}\|P_{N_2}w\|_{U^2_{S_\lambda}}\\
					&\lesssim \sum_{N\geq 2}N^{-3/4}\sum_{N_1,N_2\ll N^2/\lambda}\lambda^{-1/2}N^{-1}\log (N/\lambda)(N_1N_2)^{3/16}\|\tilde{u}\|_{X_\lambda}\|w\|_{X_\lambda}\\
					&\lesssim \lambda^{-7/8}\log(1/\lambda) \|\tilde{u}\|_{X_\lambda}\|w\|_{X_\lambda}\lesssim \lambda^{-3/8}\log(1/\lambda)\|u\|_{X_\lambda}\|v\|_Z\|w\|_{X_\lambda}.
				\end{aligned}
			\end{equation}			
%			let $L = cN\max\{\lambda N_1, N^2\}$ for some small $c>0$. Then by high modulation estimate, we have
%			\begin{align*}
%				\int_{\mathbb{R}^2}P_{N_1}\tilde{u}P_{N_2}\bar{w}P_NQ_{>L}^K\bar{\tilde{v}}~dxdt&\lesssim \|P_N(P_{N_1}\tilde{u}P_{N_2}w)\|_{L^2_{t,x}}\|P_NQ_{>L}^K\bar{\tilde{v}}\|_{L^2_{t,x}}\\
%				&\lesssim (\lambda N)^{-1/2}L^{-1/2}\|\tilde{u}\|_{V^2_{S_\lambda}}\|w\|_{V^2_{S_\lambda}}\|\tilde{v}\|_{V^2_K}
%			\end{align*}
%			and
%			\begin{align*}
%				\left|\int_{\mathbb{R}^2}Q_{>L}^{S_\lambda}P_{N_1}\tilde{u}P_{N_2}\bar{w}P_N\bar{\tilde{v}}~dxdt\right|&\lesssim \|Q_{>L}^{S_\lambda}P_{N_1}\tilde{u}\|_{L^2_{t,x}}\|P_{N_2}w\|_{L^4_{t,x}}\|\tilde{\eta}P_N\bar{\tilde{v}}\|_{L^4_{t,x}}\\
%				&\lesssim L^{-1/2}\lambda^{-1/8}N^{-1/8}\|\tilde{u}\|_{V^2_{S_\lambda}}\|w\|_{V^2_{S_\lambda}}\|\tilde{v}\|_{V^2_K}.
%			\end{align*}
%			Thus one has
%			$$\|\eta P_N\mathscr{B}(P_{N_1}\tilde{u}P_{N_2}\bar{w})\|_{U^2_K}\lesssim N^{7/8} \lambda^{-1/2}L^{-1/2}\|\tilde{u}\|_{U^2_{S_\lambda}}\|w\|_{U^2_{S_\lambda}}.$$
%			Combining \eqref{refinuvtou} one has
%			\begin{align*}
%				&\quad\sum_{N^2\nsim \lambda N_1, N_{\max}\sim N_{\mathrm{med}}}N^{-3/4}\|\eta P_N\mathscr{B}(P_{N_1}\tilde{u}P_{N_2}\bar{w})\|_{U^2_K}\\
%				&\lesssim \sum_{N^2\nsim \lambda N_1, N_{\max}\sim N_{\mathrm{med}}}N^{1/8} \lambda^{-1/2}L^{-1/2}(\lambda^{-7/16}+\lambda^{-7/24}N_1^{1/16}(\log N_1)^3)\\
%				&\hspace{80pt} \cdot\|P_{N_1}u\|_{X_\lambda}\|v\|_{Y}N_2^{3/16}\|w\|_{X_\lambda}\\
%				&\lesssim \lambda^{-19/16}\|u\|_{X_\lambda}\|v\|_Y\|w\|_{X_\lambda}.
%			\end{align*}
			If $N^2\sim \lambda N_1$ similar to the former argument by Lemma 4.12, (4.8) one has
			$$\|\eta P_N\mathscr{B}(P_{N_1}\tilde{u}P_{N_2}\bar{w})\|_{U^2_K}\lesssim \log(N/\lambda)\|\tilde{u}\|_{U^2_{S_\lambda}}\|w\|_{U^2_{S_\lambda}}.$$
			Then by \eqref{refinuvtou} we have
			\begin{equation}\label{reson}
				\begin{aligned}
					&\quad\sum_{N\geq 2}N^{-3/4}\sum_{N_1\sim N_2\sim N^2/\lambda}\|\eta P_N\mathscr{B}(P_{N_1}\tilde{u}P_{N_2}\bar{w})\|_{U^2_K}\\
					&\lesssim \sum_{N\geq 2}N^{-3/4}\sum_{N_1\sim N_2\sim N^2/\lambda} \log(N/\lambda)\|P_{N_1}\tilde{u}\|_{U^2_{S_\lambda}}\|P_{N_2}w\|_{U^2_{S_\lambda}}\\
					&\lesssim \sum_{N\geq 2}N^{-3/4}\sum_{N_1\sim N_2\sim N^2/\lambda} \log(N/\lambda)N_2^{3/16}\\
					&\hspace{80pt}\cdot(\lambda^{9/16}+\lambda^{17/24}N_1^{1/16}(\log N_1)^3)\|u\|_{X_\lambda}\|v\|_{Y}\|w\|_{X_\lambda}\\
					&\lesssim \lambda^{3/8}\|u\|_{X_\lambda}\|v\|_Z\|w\|_{X_\lambda}.
				\end{aligned}
			\end{equation}
			Combining \eqref{N1large}, \eqref{Nrelarge}, and \eqref{reson} we conclude the proof.
		\end{proof}
		
		\section{The estimates for \texorpdfstring{$F[u_0]$}{F[u0]}}\label{estF}
		In this section, we show some estimates for $F[u_0]$. For the nonresonant case, we show that it belongs to $U^2$ type space. See Lemma \ref{estiforF}. Also we show $F[u_0]\in Z$.  %We pro
		\begin{lemma}\label{estiforF}
			Recall the definition of $F[u_0]$ by \eqref{defiforF}. We have
			\begin{equation*}
				\|N^{-3/4}P_{N}(F[u_0]-F[P_{\sim N^2/\lambda}u_0])\|_{l^2_NU^2_K}\lesssim \lambda^{-1}\|u_0\|_{H^{-3/16}}^2.
			\end{equation*}
		\end{lemma}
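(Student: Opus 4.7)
The plan is to compute $P_N(F[u_0]-F[P_{\sim N^2/\lambda}u_0])$ by Littlewood--Paley decomposition of the two factors in $|S_\lambda u_0|^2$, and reduce everything to the bilinear estimates already developed for Lemma \ref{uvwtov}. Writing $A=S_\lambda u_0$, a direct Fourier computation gives
\begin{equation*}
\mathscr{F}_x[A\bar A](t,\xi)=\int \hat{u}_0(\xi+\eta)\overline{\hat{u}_0(\eta)}\,e^{-it\lambda(\xi^2+2\xi\eta)}\,d\eta,
\end{equation*}
so the KdV modulation of any piece of this product is $|\tau-\xi^3|=|\xi|\,|\xi^2+\lambda\xi+2\lambda\eta|$, which vanishes precisely on the scale $|\xi_1|\sim|\xi_2|\sim\xi^2/\lambda$. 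Since $|P_{\sim N^2/\lambda}u_0|^2$ is exactly the contribution of input dyadic pairs $(N_1,N_2)$ with both $N_j\sim N^2/\lambda$, subtracting it leaves
\begin{equation*}
P_N(F[u_0]-F[P_{\sim N^2/\lambda}u_0])=\sum_{(N_1,N_2)\not\sim(N^2/\lambda,N^2/\lambda)}\eta\, P_N\mathscr{B}\bigl(P_{N_1}A\cdot\overline{P_{N_2}A}\bigr).
\end{equation*}
Moreover, $N_1\sim N^2/\lambda$ forces $N_2\sim N_1$ (since $|\xi_1-\xi_2|\sim N$ is much smaller than $N_1$), so every remaining triple satisfies either $N^2\ll\lambda N_1$ or $N^2\gg\lambda N_1$: the resonant scale is entirely removed.

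For each non-resonant triple I would apply the same Lemma~4.12 of \cite{banchenzhang} used in the proof of Lemma \ref{uvwtov}, now with both Schr\"odinger slots occupied by free solutions, for which $\|P_{N_j}A\|_{U^2_{S_\lambda}}=\|P_{N_j}u_0\|_{L^2}\lesssim N_j^{3/16}\|P_{N_j}u_0\|_{H^{-3/16}}$. Exactly as in the derivations of \eqref{N1large} and \eqref{Nrelarge} (but without the intermediate Lemma \ref{uvtou} step since the inputs are already free), one obtains
\begin{equation*}
\|\eta P_N\mathscr{B}(P_{N_1}A\,\overline{P_{N_2}A})\|_{U^2_K}\lesssim \begin{cases}\lambda^{-1}N_1^{-1/2}\log(N_1/\lambda)\|P_{N_1}u_0\|_{L^2}\|P_{N_2}u_0\|_{L^2}, & N^2\ll\lambda N_1,\\ \lambda^{-1/2}N^{-1}\log(N/\lambda)\|P_{N_1}u_0\|_{L^2}\|P_{N_2}u_0\|_{L^2}, & N^2\gg\lambda N_1.\end{cases}
\end{equation*}

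The last step is the dyadic summation. In the first regime ($N_1\sim N_2\gg N^2/\lambda$),
\begin{equation*}
\sum_{N_1\gg N^2/\lambda}N_1^{-1/2}\|P_{N_1}u_0\|_{L^2}^2\lesssim \sum_{N_1\gg N^2/\lambda}N_1^{-1/8}\|P_{N_1}u_0\|_{H^{-3/16}}^2\lesssim (N^2/\lambda)^{-1/8}\|u_0\|_{H^{-3/16}}^2,
\end{equation*}
so together with the $\lambda^{-1}\log$ prefactor the $U^2_K$ bound is $\lesssim \lambda^{-7/8}N^{-1/4}\log(1/\lambda)\|u_0\|_{H^{-3/16}}^2$. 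In the second regime ($N_1,N_2\ll N^2/\lambda$), Cauchy--Schwarz gives $\sum_{N_1\ll N^2/\lambda}\|P_{N_1}u_0\|_{L^2}\lesssim (N^2/\lambda)^{3/16}\|u_0\|_{H^{-3/16}}$ (up to logs), so combining with $\lambda^{-1/2}N^{-1}\log$ again produces $\lesssim \lambda^{-7/8}N^{-1/4}\log(1/\lambda)\|u_0\|_{H^{-3/16}}^2$. Multiplying by $N^{-3/4}$ gives $\lesssim \lambda^{-7/8}N^{-1}\log(1/\lambda)\|u_0\|_{H^{-3/16}}^2$, which is $l^2_N$-summable, and the total is bounded by $\lambda^{-7/8}\log(1/\lambda)\|u_0\|_{H^{-3/16}}^2\lesssim \lambda^{-1}\|u_0\|_{H^{-3/16}}^2$.

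The main obstacle is the bookkeeping of the two non-resonant regimes and the verification that $F[P_{\sim N^2/\lambda}u_0]$ matches the resonant Littlewood--Paley scale exactly; once this is set up, the estimate is essentially a specialization of the KdV-side trilinear estimate already worked out in Lemma \ref{uvwtov}, with the free-solution hypothesis removing the dependence on the auxiliary $\tilde u$ and simplifying the modulation analysis.
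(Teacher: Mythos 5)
Your treatment of the two non\-resonant regimes is essentially the paper's own argument: the same reduction to Lemma~4.12 of \cite{banchenzhang} with free Schr\"odinger waves as unit $U^2_{S_\lambda}$ atoms, the same $U^2$--$U^p$ interpolation producing the logarithmic factors, and the same dyadic summation with matching exponents ($\lambda^{-7/8}$ for $\lambda N_1\ll N^2$, $\lambda^{-1}$ for $\lambda N_1\gg N^2$). The identification of the resonant input scale $N_1\sim N_2\sim N^2/\lambda$ via the modulation identity is also correct, modulo the usual bookkeeping of cross terms at the edge of the $P_{\sim N^2/\lambda}$ window, which is harmless.

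The genuine gap is the low output frequency case $N\lesssim 1$, which the lemma's $l^2_N U^2_K$ norm includes and which your two regimes do not cover. Your second bound $\lambda^{-1/2}N^{-1}\log(N/\lambda)$ rests on a transversality gain of $N^{-1}$ in the output frequency (Lemma~4.12, (4.6) of \cite{banchenzhang}), and the dichotomy ``either $N^2\ll\lambda N_1$ or $N^2\gg\lambda N_1$'' degenerates when $N\sim 1$: the resonance function $|\xi|\,|\xi^2+\lambda\xi+2\lambda\eta|$ loses its lower bound as $\xi\to 0$ uniformly in $N_1$, so the high\-modulation mechanism gives nothing near $\xi=0$ and the cited estimates are only established for $N\gg 1$. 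The paper handles $N\lesssim 1$ by an entirely different route: H\"older with exponents $L_x^{2q/(q-2)}L_t^2\times L^2_{t,x}\times L_x^qL_t^\infty$, interpolating the local smoothing estimate $\|\tilde\eta S_\lambda(t)P_{N_1}u_0\|_{L_x^\infty L_t^2}\lesssim(\lambda N_1)^{-1/2}\|u_0\|_{L^2}$ with the trivial $L^2$ bound and using the maximal function estimate for the dual $V^2_K$ function, which at $q=8/3$ yields $(\lambda N_1)^{-3/8}$ per block and sums to $\lambda^{-3/8}\|u_0\|_{H^{-3/16}}^2$. You need to supply this (or an equivalent) argument for the low\-frequency output before the lemma is proved.
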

		\begin{proof}[\textbf{Proof}]
			If $N\lesssim 1$, we claim that
			\begin{equation}\label{lowfreq}
				\left|\int_{\mathbb{R}^2}S_\lambda(t)P_{N_1}u_0\overline{S_\lambda(t)P_{N_2}u_0}P_N\bar{v}~dxdt\right|\lesssim N_1^{-3/8}\|u_0\|_{L^2}^2\|v\|_{V^2_K}
			\end{equation}
			where $v$ is supported on $[-1/2,1/2]\times \mathbb{R}$. Let $q>2$. By the H\"{o}lder inequality, local smoothing, maximal function, and Strichartz estimates one has
			\begin{align*}
				&\quad \left|\int_{\mathbb{R}^2}S_\lambda(t)P_{N_1}u_0\overline{S_\lambda(t)P_{N_2}u_0}P_N\bar{v}~dxdt\right|\\
				&\lesssim \|\tilde{\eta}S_{\lambda}(t)P_{N_1}u_0\|_{L_x^{2q/(q-2)} L_t^2}\|\tilde{\eta}S_{\lambda}(t)P_{N_2}u_0\|_{L_{t,x}^2}\|P_{N}v\|_{L_x^qL_t^\infty}\\
				&\lesssim \|\tilde{\eta}S_{\lambda}(t)P_{N_1}u_0\|_{L_x^{\infty} L_t^2}^{2/q}\|u_0\|^{1+(q-2)/q}_{L_{x}^2}\|v\|_{V^2_K}\\
				&\lesssim (\lambda N_1)^{-1/q}\|u_0\|_{L^2}^2\|v\|_{V^2_K}.
			\end{align*}
			By choosing $q = 8/3$ we obtain \eqref{lowfreq}. Then by the duality one has
			\begin{align*}
				&\quad \sum_{N\lesssim 1}\sum_{N_1\sim N_2}N^{-3/4}\left\|\eta P_N\mathscr{B}(S_\lambda(t)P_{N_1}u_0\overline{S_\lambda(t)P_{N_2}u_0})\right\|_{U^2_K}\\
				&\lesssim \sum_{N_1\sim N_2}(\lambda N_1)^{-3/8}\|P_{N_1}u_0\|_{L^2}\|P_{N_2}u_0\|_{L^2}\lesssim \lambda^{-3/8}\|u_0\|_{H^{3/16}}^2.
			\end{align*}
			If $\lambda N_1\ll N^2$, by Lemma 4.12 in \cite{banchenzhang} one has
			\begin{equation*}
				\left|\int_{\mathbb{R}^2} \partial_x(P_{N_1}S_\lambda(t)u_0\overline{S_\lambda(t) P_{N_2}u_0}) P_N\bar{v}~dxdt\right|\lesssim \lambda^{-1/2}N^{-1}\|u_0\|_{L^2}^2\|v\|_{U^2_K}.
			\end{equation*}
			By the H\"{o}lder inequality and the Strichartz estimate one also has
			\begin{align*}
				&\quad\left|\int_{\mathbb{R}^2} \partial_x(P_{N_1}S_\lambda(t)u_0\overline{S_\lambda(t) P_{N_2}u_0}) P_N\bar{v}~dxdt\right|\\
				&\lesssim \|\tilde{\eta}P_{N_1}S_\lambda(t)u_0\|_{L^3_{t,x}}\|\tilde{\eta}P_{N_2}S_\lambda(t)u_0\|_{L^2_{t,x}}\|\partial_x P_N v\|_{L^6_{t,x}}\\
				&\lesssim \|P_{N_1}S_\lambda(t)u_0\|^{1/2}_{L^6_{t,x}}\|u_0\|^{3/2}_{L^2}N^{5/6}\|v\|_{U^6_K}\\
				&\lesssim \lambda^{-1/12}N^{5/6}\|u_0\|_{L^2}^2\|v\|_{U^6_K}
			\end{align*}
			where $v$ is supported on $[-1/2,1/2]\times \mathbb{R}$. Then by the interpolation in \cite{hadac2009well} we have
			\begin{equation*}
				\left\|\eta P_N\mathscr{B}(P_{N_1}S_\lambda(t)u_0\overline{S_\lambda(t) P_{N_2}u_0}) \right\|_{U^2_K}\lesssim \lambda^{-1/2}N^{-1}\log N\|u_0\|_{L^2}^2.
			\end{equation*}
			Since $\max\{N_1, N_2, N\}\sim \mathrm{med}\{N_1,N_2,N\}$, thus
			\begin{align*}
				&\quad\sum_{N\gg 1}\sum_{\lambda N_1\ll N^2}N^{-3/4}\left\|\eta P_N\mathscr{B}(P_{N_1}S_\lambda(t)u_0\overline{S_\lambda(t) P_{N_2}u_0}) \right\|_{U^2_K}\\
				&\lesssim \sum_{N\gg 1}\sum_{\lambda \max\{N_1,N_2\}\ll N^2}N^{-3/4}\lambda^{-1/2}N^{-1}\log N\|P_{N_1}u_0\|_{L^2}\|P_{N_2}u_0\|_{L^2}\\
				&\lesssim \lambda^{-7/8}\|u_0\|_{H^{-3/16}}^2.
			\end{align*}
			If $\lambda N_1\gg N^2$, by Lemma 4.12 in \cite{banchenzhang} one has
			\begin{equation*}
				\left|\int_{\mathbb{R}^2} \partial_x(P_{N_1}S_\lambda(t)u_0\overline{S_\lambda(t) P_{N_2}u_0}) P_N\bar{v}~dxdt\right|\lesssim \lambda^{-1}N_1^{-1/2}\|u_0\|_{L^2}^2\|v\|_{U^2_K}.
			\end{equation*}
			By similar argument as $\lambda N_1\ll N^2$ and  the interpolation in  \cite{hadac2009well} we have
			\begin{equation*}
				\left\|\eta P_N\mathscr{B}(P_{N_1}S_\lambda(t)u_0\overline{S_\lambda(t) P_{N_2}u_0}) \right\|_{U^2_K}\lesssim \lambda^{-1} N_1^{-1/2}\log N_1\|u_0\|_{L^2}^2.
			\end{equation*}
			Thus,
			\begin{align*}
				&\quad\sum_{N\gg 1}\sum_{\lambda N_1\gg N^2}N^{-3/4}\left\|\eta P_N\mathscr{B}(P_{N_1}S_\lambda(t)u_0\overline{S_\lambda(t) P_{N_2}u_0}) \right\|_{U^2_K}\\
				&\lesssim \sum_{N\gg 1}\sum_{N_1\sim N_2}N^{-3/4}\lambda^{-1}N_1^{-1/2}\log N_1\|P_{N_1}u_0\|_{L^2}\|P_{N_2}u_0\|_{L^2}\\
				&\lesssim \lambda^{-1}\|u_0\|_{H^{-3/16}}^2.
			\end{align*}
			We conclude the proof.
		\end{proof}
		Although we can not show the desired estimate for $F[P_{\sim N^2/\lambda}u_0]$ in $U^2_K$, one has the following estimate.
		\begin{lemma}\label{estiforF2}
			$\|N^{-3/4}P_{N}F[P_{\sim N^2/\lambda}u_0]\|_{l^2_NV^2_K}\lesssim \lambda^{-3/8}\|u_0\|_{H^{-3/16}}^2$.
		\end{lemma}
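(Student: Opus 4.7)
The obstruction to a $U^2_K$ estimate is the resonant configuration $N_1\sim N_2\sim N^2/\lambda$, in which the Duhamel phase
$\Omega(\xi,\xi_2)=-\xi(\xi^2+\lambda\xi+2\lambda\xi_2)$
can vanish and produce a secular (linear-in-$t$) contribution in $F[P_{\sim N^2/\lambda}u_0]$: such a term lies in $V^2_K$ (because $\eta$ keeps the support bounded) but fails to be uniformly in $U^2_K$. The plan is therefore to exploit $V^2_K$ through the duality $(U^2_K)^\ast=V^2_K$ rather than to attempt an atomic decomposition of $F$ directly.

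Writing $v_N:=P_NF[P_{\sim N^2/\lambda}u_0]$, duality gives
\[\|v_N\|_{V^2_K}=\sup_{\|w\|_{U^2_K}\leq 1,\ w=P_Nw}\Bigl|\int\!\!\int v_N\,\bar w\,dx\,dt\Bigr|,\]
and by the atomic characterization of $U^2_K$ it suffices to test against atoms $w(t)=\chi_{[a,b)}(t)\,K(t)\phi$ with $\|\phi\|_{L^2}\leq 1$. For such $w$, Fubini and an integration by parts in $x$ rewrite the pairing as
\[-\int_0^\infty\Psi_{a,b}(s)\,\bigl\langle\,|S_\lambda P_{\sim N^2/\lambda}u_0|^2(s),\ \partial_x K(s)P_N\phi\,\bigr\rangle\,ds,\qquad \Psi_{a,b}(s):=\chi_{[0,b]}(s)\int_{\max(a,s)}^{b}\eta(t)\,dt.\]
The scalar weight $\Psi_{a,b}$ is uniformly bounded and of bounded variation in $s$, independently of $a,b$, so $\tilde w(s):=\Psi_{a,b}(s)K(s)\phi$ satisfies $\|\tilde w\|_{V^2_K}\lesssim\|\phi\|_{L^2}$.

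This reduces the task to a trilinear integral with two Schrödinger factors and one $V^2_K$ KdV factor in the resonant regime $\lambda N_1\sim N^2$, which is exactly the configuration handled by Lemma 4.12, (4.8) in \cite{banchenzhang}, except that $\tilde w$ lies only in $V^2_K$. I would upgrade the $U^2_K$ bound to a $V^2_K$ bound at the cost of a $(\log(N/\lambda))^{O(1)}$ factor by combining the resonant $U^2_K$ estimate with a $U^{12}_K$--Strichartz bound and invoking the interpolation of Proposition 2.20 in \cite{hadac2009well}, exactly as was done in deriving \eqref{reslab} in the proof of Lemma \ref{uvtou}. The outcome is
\[\bigl|\langle v_N,w\rangle\bigr|\lesssim \frac{N}{N_2}\,(\log(N/\lambda))^{O(1)}\,\|P_{N_1}u_0\|_{L^2}\|P_{N_2}u_0\|_{L^2}\|\phi\|_{L^2}.\]

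Inserting $N_1\sim N_2\sim N^2/\lambda$ and $\|P_{\sim N^2/\lambda}u_0\|_{L^2}^2\leq (N^2/\lambda)^{3/8}\|u_0\|_{H^{-3/16}}^2$ produces
$\|v_N\|_{V^2_K}\lesssim \lambda^{5/8}N^{-1/4}(\log(N/\lambda))^{O(1)}\|u_0\|_{H^{-3/16}}^2$; multiplying by $N^{-3/4}$ yields a dyadic sequence bounded by $\lambda^{5/8}N^{-1}$ up to logs, whose $\ell^2_N$ norm is finite and dominated by $\lambda^{5/8}\|u_0\|_{H^{-3/16}}^2$—comfortably within the claimed $\lambda^{-3/8}$. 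The principal obstacle is the step above: the non-atomic time cutoff $\Psi_{a,b}$ prevents a direct application of the $U^2$-based Lemma 4.12, so the $U^2\to V^2$ interpolation with logarithmic loss has to be carried out carefully, mirroring the resonant-case treatment in Lemma \ref{uvtou}.
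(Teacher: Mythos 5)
Your overall architecture is the right one and is essentially the paper's: since $(U^2_K)^*=V^2_K$, the $V^2_K$ norm of the Duhamel term $P_N\mathscr{B}(|S_\lambda(t)P_{\sim N^2/\lambda}u_0|^2)$ is controlled by testing against functions bounded in $U^2_K$, and the resulting trilinear integral (two free Schr\"odinger waves, one $U^2_K$ test function, resonant configuration $\lambda N_{\mathrm{Schr}}\sim N^2$) is exactly what Lemma 4.12, (4.8) of \cite{banchenzhang} handles. However, your detour through single-step atoms, the weight $\Psi_{a,b}$, and the $U^2\to V^2$ upgrade with logarithmic loss is unnecessary and is where things go astray. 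The duality already hands you a test function measured in $U^2_K$ (this is the standard $\|\mathscr{B}(f)\|_{V^2_K}\lesssim\sup_{\|w\|_{U^2_K}\leq 1}|\int\!\!\int \partial_x f\,\bar w\,dx\,dt|$), so (4.8) applies directly with no interpolation and no logs; moreover your $\Psi_{a,b}$ is bounded and of bounded variation, hence multiplication by it preserves $U^2_K$ anyway, so the claimed obstruction ("$\tilde w$ lies only in $V^2_K$") is not actually there.

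The genuine error is quantitative and it breaks your summation. The resonant estimate (4.8), as used throughout this paper (see the derivation of \eqref{reslab} in Lemma \ref{uvtou}, where the gain is $N_2^{-1}$ with $N_2$ the \emph{KdV} frequency satisfying $N_2^2\sim\lambda N_1$), gains the inverse of the KdV frequency, which in the present configuration is $N$, not the inverse of the Schr\"odinger frequency $N_2\sim N^2/\lambda$. So the correct per-block bound is $N\cdot N^{-1}=O(1)$ times $\|P_{\sim N^2/\lambda}u_0\|_{L^2}^2$, not your $N/N_2=\lambda/N$ times it; your extra factor $\lambda N^{-1}$ is spurious (and implausibly strong --- it would essentially make the whole resonant analysis of Section \ref{estF} unnecessary). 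With the corrected factor, your crude bound $\|P_{\sim N^2/\lambda}u_0\|_{L^2}^2\leq (N^2/\lambda)^{3/8}\|u_0\|_{H^{-3/16}}^2$ produces a dyadic sequence that is constant $\lambda^{-3/8}$ in $N$ and hence not $\ell^2_N$-summable. To close the argument you must keep the $\ell^2$ structure of $\|P_Mu_0\|_{L^2}$ and use that the map $N\mapsto M\sim N^2/\lambda$ is essentially injective on dyadic scales (each $M$ is hit boundedly many times), which gives $\|(\lambda M)^{-3/8}\|P_Mu_0\|_{L^2}^2\|_{\ell^2}\lesssim\lambda^{-3/8}\|u_0\|_{H^{-3/16}}^2$ --- this is precisely the paper's final step, and it is the reason the lemma's constant is $\lambda^{-3/8}$ rather than your $\lambda^{5/8}$.
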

		\begin{proof}[\textbf{Proof}]
			By Lemma 4.12 in \cite{banchenzhang} one has
			\begin{equation*}
				\left|\int_{\mathbb{R}^2}\partial_x(|S_\lambda(t)P_{\sim N^2/\lambda}u_0|^2)P_N \bar{v}~dxdt\right|\lesssim \|u_0\|_{L^2}^2\|v\|_{U^2_K}
			\end{equation*}
			where $v$ is supported on $[-1,1]\times \mathbb{R}$. Then by the duality one has
			\begin{align*}
				\|N^{-3/4}P_{N}F[P_{\sim N^2/\lambda}u_0]\|_{l^2_NV^2_K}& \lesssim \|N^{-3/4}\|P_{\sim N^2/\lambda}u_0\|_{L^2}^2\|_{l^2_N}\\
				&\lesssim \|(\lambda M)^{-3/16}P_Mu_0\|^2_{l^2_ML^2}\\
				&\lesssim \lambda^{-3/8}\|u_0\|_{H^{-3/16}}^2.
			\end{align*}
			To show the second inequality in the former argument we use that the carnality of $\{N\in 2^{\mathbb{N}_0}:N^2\sim \lambda M\}$ is uniformly bounded for $M$.
		\end{proof}
		We give some refinement for the case $\lambda N_1\sim \lambda N_2\sim N^2$ in following two lemmas.
		\begin{lemma}\label{resonrefi}
			Let $N\gg 1$, $0<c\ll 1\ll C$ and $I_1, I_2$ be intervals included in $[-CN^2,-cN^2]\cup [cN^2,CN^2]$ with length less than $cN$. We assume $J:=I_1-I_2\subset [-CN,cN]\cup [cN,CN]$ and $|\xi^3+\lambda\xi_1^2-\lambda\xi_2^2|\gtrsim N$ for any $\xi\in J, \xi_j\in I_j$, $\xi_1+\xi_2 = \xi$. Then we have
			\begin{align*}
				\left\|\eta P_J\mathscr{B} (S_\lambda(t)P_{I_1} u_0 \overline{S_\lambda(t)P_{I_2} u_0})\right\|_{U^2_K}\lesssim \lambda^{-1/2}\|u_0\|_{L^2}^2.
			\end{align*}
		\end{lemma}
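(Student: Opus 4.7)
The plan is to combine $U^2_K$-$V^2_K$ duality for the KdV flow with a KdV-modulation cutoff that is forced by the phase hypothesis, and then to close everything by a one-dimensional bilinear Plancherel identity.

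First, by the $U^2_K$-$V^2_K$ duality applied exactly as in the proofs of Lemmas \ref{estiforF} and \ref{estiforF2}, it suffices to show
\[
\left|\int_{\mathbb{R}^2}\partial_x\bigl(S_\lambda(t)P_{I_1}u_0\cdot\overline{S_\lambda(t)P_{I_2}u_0}\bigr)\,P_J\bar v\,dxdt\right|\lesssim \lambda^{-1/2}\|u_0\|_{L^2}^2\|v\|_{V^2_K}
\]
for an arbitrary $v$ supported on a neighborhood of $\mathrm{supp}(\eta)$. The space-time Fourier transform of the product $S_\lambda P_{I_1}u_0\cdot\overline{S_\lambda P_{I_2}u_0}$ is concentrated on the surface $\tau=\lambda\xi_2^2-\lambda\xi_1^2$ with $\xi_j\in I_j$ and $\xi=\xi_1+\xi_2$ (absorbing the complex-conjugation sign into the labelling of $\xi_2$), so the phase hypothesis $|\xi^3+\lambda\xi_1^2-\lambda\xi_2^2|\gtrsim N$ is precisely the statement that $P_J(\cdots)$ has KdV-modulation $|\tau-\xi^3|\gtrsim N$. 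Consequently I may freely replace $v$ by $Q^K_{\gtrsim N}v$ in the above pairing.

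Next, Cauchy-Schwarz in $L^2_{t,x}$ combined with the standard bound $\|Q^K_{\gtrsim N}v\|_{L^2_{t,x}}\lesssim N^{-1/2}\|v\|_{V^2_K}$ reduces matters to the bilinear $L^2$-inequality
\[
\bigl\|P_J\bigl(S_\lambda P_{I_1}u_0\cdot\overline{S_\lambda P_{I_2}u_0}\bigr)\bigr\|_{L^2_{t,x}}\lesssim (\lambda N)^{-1/2}\|u_0\|_{L^2}^2,
\]
since then the spatial derivative contributes $N$, the modulation projection contributes $N^{-1/2}$, and the bilinear factor contributes $(\lambda N)^{-1/2}$, combining to exactly $\lambda^{-1/2}$. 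This bilinear $L^2$-bound is the main technical point, but in one space dimension it is essentially immediate from Plancherel: the Fourier transform of the product reduces, after integrating out the obvious $\delta$, to an integral in $\xi_1$ against $\hat u_0(\xi_1)\overline{\hat u_0(\xi_1-\xi)}$ with one remaining $\delta$ in the linear phase $\lambda\xi(\xi-2\xi_1)$, whose $\xi_1$-derivative equals the constant $-2\lambda\xi$. Since $|\xi|\sim N$ on $J$, solving this $\delta$ produces a Jacobian factor of size $(\lambda N)^{-1}$, and a further Plancherel in $\xi_1$ yields the bound with $\|P_{I_1}u_0\|_{L^2}^2\|P_{I_2}u_0\|_{L^2}^2$ on the right.

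The conceptually important step is identifying the phase hypothesis with a KdV-modulation cutoff; after that, the remaining work is forced. The narrowness assumption $|I_j|<cN$ is not used quantitatively in any of the above inequalities; it is needed only so that the lower bound on $|\xi^3+\lambda\xi_1^2-\lambda\xi_2^2|$ can hold consistently for all admissible $(\xi,\xi_1,\xi_2)$ in the resonant regime $\lambda|I_j|\sim N^2$, and so that $|J|\lesssim N$ automatically. The only potential obstacle is bookkeeping with the cutoffs $\eta,\tilde\eta$ and with the frequency projection $P_J$ on a generic interval, but these are standard technicalities that do not affect any of the powers of $\lambda$ or $N$.
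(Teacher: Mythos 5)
Your proposal is correct and follows essentially the same route as the paper: duality against $V^2_K$, the observation that the phase hypothesis forces the product onto KdV-modulation $\gtrsim N$ (so the pairing with $Q^K_{\leq cN}v$ vanishes), then Cauchy--Schwarz with the high-modulation bound $L^{-1/2}$ and the bilinear $L^2$ transversality estimate $(\lambda N)^{-1/2}$ for the two Schr\"odinger waves. The only cosmetic difference is that you derive the bilinear $L^2$ bound inline via Plancherel and the Jacobian $|2\lambda\xi|^{-1}\sim(\lambda N)^{-1}$, whereas the paper cites it from Lemma 4.2, (4.2) of \cite{banchenzhang}.
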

		\begin{proof}[\textbf{Proof}]
%			By duality we only need to show
%			\begin{align*}
%				\left|\int_{\mathbb{R}^2}S_\lambda(t)P_{I_1} u_0 \overline{S_\lambda(t)P_{I_2} u_0}P_J \bar{v}~dxdt\right|\lesssim N^{-1} \|u_0\|_{L^2}^2 \|v\|_{V^2_K}
%			\end{align*}
%			where $v$ is supported on $[-1,1]\times\mathbb{R}$. 
			Let $L = cN$. Since $|\xi^3+\lambda\xi_1^2-\lambda\xi_2^2|>L/2$ for $\xi_j\in I_j, \xi\in J, \xi_1+\xi_2 = \xi$, we have
			$$\int_{\mathbb{R}^2}\partial_x(S_\lambda(t)P_{I_1} u_0 S_\lambda(t)P_{I_2} u_0)Q^K_{\leq L}P_J \bar{v}~dxdt = 0.$$
			By the high modulation estimate and Lemma 4.2, (4.2) in \cite{banchenzhang} one has
			\begin{align*}
				&\quad\left|\int_{\mathbb{R}^2}\partial_x(S_\lambda(t)P_{I_1} u_0 \overline{S_\lambda(t)P_{I_2} u_0})Q_{>L}^KP_J \bar{v}~dxdt\right|\\
				&\lesssim \|P_J(S_\lambda(s)P_{I_1} u_0 \overline{S_\lambda(s)P_{I_2} u_0})\|_{L^2_{t,x}}\|\partial_xQ^K_{>L}P_J v\|_{L^2_{t,x}}\\
				&\lesssim (\lambda N)^{-1/2}\|u_0\|_{L^2}^2L^{-1/2}N\|v\|_{V^2_K}\sim \lambda^{-1/2}\|u_0\|_{L^2}^2\|v\|_{V^2_K}.
			\end{align*}
			By the duality we conclude the proof.
		\end{proof}
%		\begin{lemma}\label{resonrefi2}
%			Let $N\gg 1$, $0<c\ll 1\ll C$ and $I_1, I_2$ be intervals included in $[-CN^2,-cN^2]\cup [cN^2,CN^2]$ with length $1$. We assume $J:=I_1-I_2\subset [-CN,cN]\cup [cN,CN]$ and $|\xi^3+\lambda\xi_1^2-\lambda\xi_2^2|\gtrsim N$ for any $\xi\in J, \xi_j\in I_j$, $\xi_1+\xi_2 = \xi$. Then we have
%			\begin{align*}
%				\left\|\eta P_J\partial_x \int_0^t K(t-s)(S_\lambda(s)P_{I_1} u_0 \overline{S_\lambda(s)P_{I_2} u_0})~ds\right\|_{U^2_K}\lesssim \|u_0\|_{L^2}^2.
%			\end{align*}
%		\end{lemma}
%		\begin{proof}[\textbf{Proof}]
%			content...
%		\end{proof}
		\begin{lemma}\label{resonrefi3}
			Let $N\gg 1$, $0<c\ll 1\ll C$ and $I_1, I_2$ be intervals included in $[-CN^2,-cN^2]\cup [cN^2,CN^2]$ with length $1/N$. We assume $J$ included in $[-CN,-cN]\cup [cN,CN]$ with length $1/N^2$ and $|\xi^3+\lambda\xi_1^2-\lambda\xi_2^2|\lesssim 1$ for any $\xi\in J, \xi_j\in I_j$, $\xi_1+\xi_2 = \xi$. Then we have
			\begin{align*}
				\left\|\eta P_J\mathscr{B}(S_\lambda(t)P_{I_1} u_0 \overline{S_\lambda(t)P_{I_2} u_0})\right\|_{U^2_K}\lesssim \|u_0\|_{L^2}^2.
			\end{align*}
		\end{lemma}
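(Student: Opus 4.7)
The strategy is to commute $P_J$ past $\mathscr{B}$, apply the standard inhomogeneous linear estimate for KdV in the $U^p$-$V^p$ framework, and conclude via a uniform pointwise bound on the spatial Fourier transform of $f:=S_\lambda(t)P_{I_1}u_0\,\overline{S_\lambda(t)P_{I_2}u_0}$. Since $P_J$ is a Fourier multiplier in $x$, it commutes with both $\partial_x$ and $K(t-s)$, so $\eta P_J\mathscr{B}(f)=\eta\mathscr{B}(P_J f)$. The time cutoff $\eta$ restricts the relevant integration range to $[-1,1]$, and the standard Duhamel estimate in the $U^p$-$V^p$ theory (cf.\ Hadac-Herr-Koch) gives
$$\|\eta P_J\mathscr{B}(f)\|_{U^2_K}\lesssim \|\partial_x P_J f\|_{L^1_{t\in[-1,1]}L^2_x}.$$

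Next, I estimate $\|\partial_x P_J f(s)\|_{L^2_x}$ uniformly in $s\in[-1,1]$. A direct computation of the spatial Fourier transform yields
$$\widehat{f(s)}(\xi)=\int\widehat{P_{I_1}u_0}(\xi_1)\,\overline{\widehat{P_{I_2}u_0}(\xi_1-\xi)}\,e^{-is\lambda[\xi_1^2-(\xi-\xi_1)^2]}\,d\xi_1,$$
so Cauchy-Schwarz in $\xi_1$ yields the uniform pointwise bound $|\widehat{f(s)}(\xi)|\leq\|P_{I_1}u_0\|_{L^2}\|P_{I_2}u_0\|_{L^2}\leq \|u_0\|_{L^2}^2$. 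Using $|\xi|\sim N$ on $J$ and $|J|=1/N^2$, Plancherel gives
$$\|\partial_x P_J f(s)\|_{L^2_x}^2\lesssim N^2\int_J|\widehat{f(s)}(\xi)|^2\,d\xi\lesssim N^2\cdot N^{-2}\|u_0\|_{L^2}^4=\|u_0\|_{L^2}^4,$$
and integrating over $s\in[-1,1]$ yields the claimed bound.

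The essential mechanism is that the tiny measure $|J|=1/N^2$ precisely cancels the derivative factor of $N$. It is worth emphasizing that the resonance condition $|\xi^3+\lambda\xi_1^2-\lambda\xi_2^2|\lesssim 1$ does not explicitly enter the argument: it only identifies the regime where this lemma is invoked (in contrast to Lemma \ref{resonrefi}, where the non-resonance condition drives a modulation-based high-frequency cutoff argument). Consequently I do not expect a major obstacle; the only step requiring some care is the opening Duhamel estimate, but this is now a routine tool in the $U^p$-$V^p$ framework.
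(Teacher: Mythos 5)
Your proof is correct, and it takes a genuinely different route from the paper's. You reduce everything to the crude inhomogeneous estimate $\|\eta\mathscr{B}(g)\|_{U^2_K}\lesssim\|\partial_x g\|_{L^1_tL^2_x}$, which is valid by the $U^2$--$V^2$ duality $\|u\|_{U^2}=\sup_{\|v\|_{V^2}\leq 1}|\int\langle u',v\rangle\,dt|$ together with the embedding $V^2_K\hookrightarrow L^\infty_tL^2_x$ and the boundedness of multiplication by the cutoff $\eta$ on $U^2_K$ (all standard in \cite{hadac2009well}); the derivative loss $N$ is then exactly compensated by $|J|^{1/2}=N^{-1}$ via the pointwise Cauchy--Schwarz bound $|\widehat{f(s)}(\xi)|\leq\|u_0\|_{L^2}^2$ on the convolution. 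The paper instead keeps the Duhamel integral in Fourier variables and Taylor-expands the oscillatory factor $e^{-is(\xi^3+\lambda\xi_1^2-\lambda\xi_2^2)}$: the hypothesis $|\xi^3+\lambda\xi_1^2-\lambda\xi_2^2|\lesssim 1$ is precisely what makes that series summable (using $\|\eta(t)t^{k+1}\|_{U^2_t}\lesssim k$), after which each term is a tensor product of a scalar $U^2_t$ function with an $L^2_\xi$ profile, estimated by the same quantity $N\|u_0\|_{L^2}^2|J|^{1/2}$. Both arguments therefore land on identical numerology; yours is shorter and, as you correctly observe, never uses the resonance hypothesis --- only $|J|=N^{-2}$ matters --- so it in fact proves a marginally stronger statement (the resonance condition in the lemma serves only to characterize the exceptional set $J_2$ in the applications, e.g.\ in Lemma \ref{uusmooth}). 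The paper's expansion buys nothing extra here; it is essentially a hands-on implementation of the same trivial bound inside $U^2_t(L^2_x)$ that avoids invoking the $L^1_tL^2_x\to U^2$ embedding explicitly. The only step you should spell out is that opening Duhamel estimate, but it is routine.
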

		\begin{proof}[\textbf{Proof}]
			Let $P_{I_1}u_0 = u_1, P_{I_2}u_0 = u_2$. By the definition of $U^2_K$ and the Taylor expansion one has
			\begin{align*}
				&\quad\left\|\eta P_J\mathscr{B}(S_\lambda(t)P_{I_1} u_0 \overline{S_\lambda(t)P_{I_2} u_0})\right\|_{U^2_K}\\
				&= \left\|\eta P_J\partial_x \int_0^t K(-s)(S_\lambda(s)u_1 \overline{S_\lambda(s)u_2)}~ds\right\|_{U^2_t(L^2_x)}\\
				&\sim \left\|\eta(t) \xi\chi_{\xi \in J} \int_0^t e^{-is\xi^3}\int_{\xi_1+\xi_2 = \xi}e^{-i\lambda s\xi_1^2+i\lambda s \xi_2^2}\widehat{u_1}(\xi_1)\overline{\widehat{u_2}}(-\xi_2)~ds\right\|_{U^2_t(L^2_\xi)}\\
				&\lesssim \sum_{k=0}^\infty\frac{1}{k!} \left\|\eta(t) \xi\chi_{\xi \in J} \int_0^t \int_{\xi_1+\xi_2 = \xi}\widehat{u_1}(\xi_1)\overline{\widehat{u_2}}(-\xi_2)s^k(\xi^3+\lambda\xi_1^2-\lambda\xi_2^2)^k~ds\right\|_{U^2_tL^2_\xi}\\
				&\lesssim \sum_{k=0}^\infty\frac{C^kN}{k!} \left\|\chi_{\xi \in J} \int_{\xi_1+\xi_2 = \xi}|\widehat{u_1}(\xi_1)\widehat{u_2}(-\xi_2)|~ds\right\|_{L^2_\xi}\left\|\eta(t)\frac{t^{k+1}-1}{k+1}\right\|_{U^2_t}\\
				&\lesssim N\|u_1\|_{L^2}\|u_2\|_{L^2}|J|^{1/2}\lesssim \|u_0\|_{L^2}^2.
			\end{align*}
			Here we use $\|\eta(t)t^{k+1}\|_{U^2_t}\lesssim \|\eta(t)t^{k+1}\|_{H^{1}}\lesssim k$.
		\end{proof}
		\begin{lemma}\label{uusmooth}
			$\|N^{1/4}P_N F[u_0]\|_{l^2_{N}L_x^\infty L_t^2}\lesssim \lambda^{-1}\|u_0\|_{H^{-3/16}}^2$.
		\end{lemma}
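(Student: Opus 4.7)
The plan is to split $F[u_0]=(F[u_0]-F[P_{\sim N^2/\lambda}u_0])+F[P_{\sim N^2/\lambda}u_0]$ as in the proof of Lemma \ref{estiforF}, and treat the two pieces separately. In each case the strategy is to produce a $U^2_K$-type bound and then convert it to the local smoothing norm via the standard transfer $\|P_Nv\|_{L_x^\infty L_t^2}\lesssim N^{-1}\|P_Nv\|_{U^2_K}$, which follows from $\|\partial_xK(t)v_0\|_{L_x^\infty L_t^2}\lesssim\|v_0\|_{L^2}$ by the $U^2$ atomic decomposition.

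For the non-resonant piece, Lemma \ref{estiforF} already gives an $l^2_N U^2_K$ bound with weight $N^{-3/4}$. Since $N^{1/4}\cdot N^{-1}=N^{-3/4}$, the weights match and the local smoothing transfer immediately yields
$$\|N^{1/4}P_N(F[u_0]-F[P_{\sim N^2/\lambda}u_0])\|_{l^2_N L_x^\infty L_t^2}\lesssim\lambda^{-1}\|u_0\|_{H^{-3/16}}^2.$$

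For the resonant piece $F[P_{\sim M}u_0]$ with $M\sim N^2/\lambda$, I would decompose $P_{\sim M}u_0=\sum_kP_{I_k}u_0$ with $|I_k|=1/N$ and partition the output frequency into intervals $J$ of length $1/N^2$. Depending on the modulation size $|\Omega|=|\xi^3+\lambda\xi_1^2-\lambda\xi_2^2|$, Lemma \ref{resonrefi3} handles the resonant case $|\Omega|\lesssim 1$, Lemma \ref{resonrefi} the high-modulation case $|\Omega|\gtrsim N$, and dyadic interpolation handles the intermediate scales. Each produces a $U^2_K$ bound of the shape $\|\eta P_J\mathscr{B}(SP_{I_k}u_0\overline{SP_{I_l}u_0})\|_{U^2_K}\lesssim\|P_{I_k}u_0\|_{L^2}\|P_{I_l}u_0\|_{L^2}$, which converts to $L_x^\infty L_t^2$ by local smoothing with a factor of $N^{-1}$. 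The key geometric input is that along the resonance curve $\xi_1-\xi_2\approx-\xi^2/\lambda$ (slope $\sim N/\lambda$), each output interval $J$ has only $O(1/\lambda)$ contributing pairs $(I_k,I_l)$ and each pair contributes to $O(1)$ intervals $J$. Combined with the quasi-orthogonality $\|P_Nv\|_{L_x^\infty L_t^2}^2\lesssim\sum_J\|P_Jv\|_{L_x^\infty L_t^2}^2$, which follows from the fact that the $P_Jv$ have time-frequency supports concentrated along $\tau\approx-\xi^3$ with $O(1)$ overlap between neighbouring $J$'s, a double Cauchy-Schwarz summation yields the bound $\lambda^{-7/8}\|u_0\|_{H^{-3/16}}^2$, which is absorbed into the target $\lambda^{-1}\|u_0\|_{H^{-3/16}}^2$ since $\lambda\ll 1$.

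The main obstacle is precisely the bilinear summation in the resonant piece: unlike in $L^2$-based spaces, $L_x^\infty L_t^2$ admits no Plancherel-orthogonality in $x$, so one must rely on time-frequency orthogonality imposed by the KdV dispersion. Making this rigorous—justifying the overlap count of $\tau$-supports under the $\eta(t)\mathscr{B}$-structure (whose resonant part carries a linear-in-$t$ factor that must be absorbed cleanly into $L_t^2$) and tracking the $\lambda$-powers through the two Cauchy-Schwarz applications so as not to lose more than $\lambda^{-1}$—is where the bulk of the computation lies.
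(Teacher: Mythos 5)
Your treatment of the non-resonant piece is exactly the paper's: Lemma \ref{estiforF} plus the transfer $\|P_Nv\|_{L_x^\infty L_t^2}\lesssim N^{-1}\|P_Nv\|_{U^2_K}$, with the weights $N^{1/4}\cdot N^{-1}=N^{-3/4}$ matching. The skeleton of your resonant piece (interval decomposition of $P_{\sim N^2/\lambda}u_0$, Lemma \ref{resonrefi} for large modulation, Lemma \ref{resonrefi3} for $|\Omega|\lesssim 1$, then a diagonal Cauchy--Schwarz over the pairs) is also the paper's. But there is a genuine gap where you write that ``dyadic interpolation handles the intermediate scales'' $1\ll|\Omega|\ll N$. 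There is no interpolation available between Lemmas \ref{resonrefi} and \ref{resonrefi3}: if you rerun the proof of Lemma \ref{resonrefi} with modulation threshold $L\ll N$, the high-modulation step costs $L^{-1/2}N$ instead of $N^{1/2}$, so the bound degrades to $\lambda^{-1/2}(N/L)^{1/2}\|u_0\|_{L^2}^2$, and summing this over dyadic $L\in[1,N]$ loses a power of $N$ that destroys the estimate. This intermediate regime (the set $J_1\setminus J_2$ in the paper) is precisely where the bulk of the paper's proof lives: it writes out the Duhamel integral explicitly, producing the kernel $\bigl(e^{it\xi^3}-e^{i\lambda t\xi(\xi_2-\xi_1)}\bigr)/(\xi^2+\lambda\xi_1-\lambda\xi_2)$, and splits it into $\Lambda_1+\Lambda_2$. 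The term $\Lambda_1$ (KdV phase) is handled by Airy local smoothing together with the bound $\bigl\|(\xi^2+2\lambda\xi_1-\lambda\xi)^{-1}\chi\bigr\|_{L^\infty_{\xi_1}L^2_\xi}\lesssim 1$, a level-set computation on the denominator that incurs no dyadic loss; the term $\Lambda_2$ carries a non-KdV (transport-type) phase and is handled by Taylor expansion about the interval centers plus the observation $\lambda|\xi_2^0-\xi_1^0|\sim N^2$, which supplies the needed factor $N^{-1}$. Neither step is a formal consequence of the two lemmas you cite.

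A secondary, lesser issue: your counting/quasi-orthogonality argument over the $O(N^3/\lambda)$ output intervals $J$ of length $1/N^2$ is both unverified and unnecessary. The paper's hierarchical peeling (intervals of length $cN$, then $1$, then $1/N$, then $1/N^2$, removing the non-resonant part at each stage via Lemma \ref{resonrefi}) means that for each length-$cN$ pair $(I_1,I_2)$ the genuinely resonant output set is a \emph{single} shrinking interval, so the summation over pairs is one Cauchy--Schwarz along the diagonal $\xi_2\approx-\xi_1$ and the $L_x^\infty L_t^2$ orthogonality problem you worry about never arises. Also note the paper gets $\lambda^{-7/8}$ for the resonant piece from the extra $\lambda^{-1/2}$ in the claim \eqref{claimsmo} combined with $\|P_{\sim N^2/\lambda}u_0\|_{L^2}^2\sim(\lambda M)^{-3/8}$-weighted sums, not from a pair-counting gain in $\lambda$; and the case $N\lesssim 1$ of the resonant piece needs a separate (easy, Sobolev) argument that your sketch omits.
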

		\begin{proof}[\textbf{Proof}]
			Firstly by Lemma \ref{estiforF} and the local smoothing estimate one has
			$$\|N^{1/4}P_N (F[u_0]-F[P_{\sim N^2/\lambda}u_0])\|_{l^2_{N}L_x^\infty L_t^2}\lesssim \lambda^{-1}\|u_0\|_{H^{-3/16}}^2$$
			Thus we only need to show
			\begin{align*}
				\|N^{1/4}P_N F[P_{\sim N^2/\lambda}u_0]\|_{l^2_NL^\infty_xL^2_t}\lesssim \lambda^{-1} \|u_0\|_{H^{-3/16}}^2.
			\end{align*}
			If $N\lesssim 1$, by the Sobolev inequality, then
			\begin{align*}
				N^{1/4}\|P_N F[P_{\sim N^2/\lambda}u_0]\|_{L^\infty_xL^2_t}\lesssim \|P_{\sim N^2/\lambda}u_0\|_{L^4_x}^2\lesssim \lambda^{-7/8} \|u_0\|_{H^{-3/16}}^2.
			\end{align*}
			If $N\gg 1$, we claim 
			\begin{equation}\label{claimsmo}
				\left\|P_NF[P_{\sim N^2/\lambda}u_0]\right\|_{L_x^\infty L_t^2}\lesssim \lambda^{-1/2}N^{-1}\|u_0\|_{L^2}^2.
			\end{equation}
			Let $I_j$, $j= 1,2 $ be intervals included in $[-CN^2/\lambda,cN^2/\lambda]\cup[cN^2/\lambda,CN^2/\lambda]$ with length $cN$. Also we assume for any $\xi_j\in I_j$, $j = 1,2$ one has $|\eta_1-\eta_2|\sim N$. We only need to show
			$$\left\|\eta P_N\mathscr{B}(S_\lambda(t)P_{I_1}u_0\overline{S_\lambda(t)P_{I_2} v_0})\right\|_{L_x^\infty L_t^2}\lesssim \lambda^{-1/2}N^{-1}\|u_0\|_{L^2}\|v_0\|_{L^2}.$$
			There exists a interval $J$ included in $[-N, -N/2]\cup [N/2, N]$ with length $1$ such that $|\xi^3+\lambda\xi_1^2-\lambda\xi_2^2|\gtrsim N^2$ for any $\xi\in [-N,-N/2]\cup [N/2,N]\setminus J:=\hat{J}$, $\xi_j\in I_j$, $\xi = \xi_1+\xi_2$. Then by Lemma \ref{resonrefi}, the local smoothing estimate, and the orthogonality we only need to show
			$$\left\|\eta P_{J}\mathscr{B}(S_\lambda(t)P_{I_1}u_0\overline{S_\lambda(t)P_{I_2} v_0})\right\|_{L_x^\infty L_t^2}\lesssim \lambda^{-1/2}N^{-1}\|u_0\|_{L^2}\|v_0\|_{L^2}$$
			where $I_1,I_2, J$ are intervals with length $1$, $|\xi_j|\sim N^2$ for any $\xi_j\in I_j$, $|\xi|\sim N$ for any $\xi\in J$.
			
			There exists a interval $J_1$ included in $J$ with length $1/N$ such that $|\xi^3+\lambda\xi_1^2-\lambda\xi_2^2|\gtrsim N$ for any $\xi\in J\setminus J_1$, $\xi_j\in I_j$, $\xi = \xi_1+\xi_2$. Then again by Lemma \ref{resonrefi}, the local smoothing estimate, and  the orthogonality we only need to show
			$$\left\|\eta P_{J_1}\mathscr{B}(S_\lambda(t)P_{I_1}u_0\overline{S_\lambda(t)P_{I_2} v_0})\right\|_{L_x^\infty L_t^2}\lesssim \lambda^{-1/2}N^{-1} \|u_0\|_{L^2}\|v_0\|_{L^2}$$
			where $I_1,I_2, J_1$ are intervals with length $1/N$.
			
			There exists a interval $J_2$ included in $J_1$ with length $1/N^2$
			such that one has $|\xi^3+\lambda\xi_1^2-\lambda\xi_2^2|\lesssim 1$ for any $\xi\in J_2$, $\xi_j\in I_j$, $\xi = \xi_1+\xi_2$. Then by Lemma \ref{resonrefi3} and  the local smoothing estimate one has
			\begin{align*}
				&\quad\left\|\eta P_{J_2}\mathscr{B}(S_\lambda(t)P_Iu_0\overline{S_\lambda(t)P_J v_0})\right\|_{L_x^\infty L_t^2}\\
				&\lesssim N^{-1}\left\|\eta P_{J_2}\mathscr{B}(S_\lambda(t)P_Iu_0\overline{S_\lambda(t)P_J v_0})\right\|_{U^2_K}\\
				&\lesssim N^{-1}\|u_0\|_{L^2}\|v_0\|_{L^2}.
			\end{align*}
			Note that
			\begin{align*}
				&\quad\mathscr{F}_x\left(\eta(t)P_{J_1\setminus J_2}\mathscr{B}(S_\lambda(t)P_Iu_0\overline{S_\lambda(t)P_J v_0})\right)\\
				&\sim \eta(t)\xi\int_0^t\int_{\xi_1+\xi_2 = \xi} \chi_{|\xi|\sim J_1\setminus J_2, \xi_j\in I_j}e^{i(t-s)\xi^3}e^{-i\lambda s\xi_1^2+i\lambda s\xi_2^2}\widehat{u_0}(\xi_1)\overline{\widehat{v_0}}(-\xi_2)\\
				&\sim \eta(t)\int_{\xi_1+\xi_2 = \xi} \chi_{|\xi|\sim J_1\setminus J_2, \xi_j\in I_j}\frac{e^{it\xi^3}-e^{i\lambda t\xi(\xi_2-\xi_1)}}{\xi^2+\lambda\xi_1-\lambda\xi_2}\widehat{u_0}(\xi_1)\overline{\widehat{v_0}}(-\xi_2)\\
				&:= \Lambda_1+\Lambda_2.
			\end{align*}
			By the local smoothing estimate we have
			\begin{align*}
				\|\mathscr{F}_\xi^{-1}\Lambda_1\|_{L_x^\infty L_t^2}&\lesssim N^{-1}\left\|\int_{\xi_1+\xi_2 = \xi} \chi_{|\xi|\sim J_1\setminus J_2, \xi_j\in I_j}\frac{\widehat{u_0}(\xi_1)\overline{\widehat{v_0}}(-\xi_2)}{\xi^2+\lambda\xi_1-\lambda\xi_2}\right\|_{L^2_\xi}\\
				&\lesssim N^{-1}\|u_0\|_{L^2}\|v_0\|_{L^2}\left\|\frac{\chi_{|\xi|\sim J_1\setminus J_2, \xi_1\in I_1, \xi-\xi_1\in I_2}}{\xi^2+2\lambda\xi_1-\lambda\xi}\right\|_{L^\infty_{\xi_1} L^2_\xi}\\
				&\lesssim N^{-1}\|u_0\|_{L^2}\|v_0\|_{L^2}.
			\end{align*}
			Let $\xi_j^0$ be the center of $I_j$ respectively. By the Taylor expansion we have
			\begin{align*}
				&\quad\|\mathscr{F}^{-1}_\xi \Lambda_2\|_{L^\infty_x L^2_t}\\
				&\lesssim \sum_{k = 0}^\infty \frac{1}{k!}\Bigg\|\eta(t)\mathscr{F}_\xi^{-1}\int_{\xi_1+\xi_2 = \xi} \chi_{|\xi|\sim J_1\setminus J_2, \xi_j\in I_j}\frac{e^{i\lambda t\xi(\xi_2^0-\xi_1^0)}}{\xi^2+\lambda\xi_1-\lambda\xi_2}\widehat{u_0}(\xi_1)\overline{\widehat{v_0}}(-\xi_2)\\
				&\qquad\qquad\cdot (i\lambda t\xi(\xi_2-\xi_2^0-\xi_1+\xi_1^0))^k \Bigg\|_{L_x^\infty L_t^2}\\
				&\lesssim \sum_{k = 0}^\infty \frac{1}{k!}\Bigg\|\mathscr{F}^{-1}_\xi\int_{\xi_1+\xi_2 = \xi} \chi_{|\xi|\sim J_1\setminus J_2, \xi_j\in I_j}\frac{e^{i\lambda t\xi(\xi_2^0-\xi_1^0)}}{\xi^2+\lambda\xi_1-\lambda\xi_2}\widehat{u_0}(\xi_1)\overline{\widehat{v_0}}(-\xi_2)\\
				&\qquad\qquad\cdot \xi^k(\xi_2-\xi_2^0-\xi_1+\xi_1^0)^k\Bigg\|_{L_x^\infty L_t^2}.
			\end{align*}
			Note that $|\xi_2^0-\xi_1^0|\sim N^2$. By the local smoothing estimate one has
			\begin{align*}
				\|\mathscr{F}^{-1}_\xi \Lambda_2\|_{L^\infty_x L^2_t}&\lesssim N^{-1}\sum_{k = 0}^\infty \frac{1}{k!}\Bigg\|\int_{\xi_1+\xi_2 = \xi} \chi_{|\xi|\sim J_1\setminus J_2, \xi_j\in I_j}\frac{\widehat{u_0}(\xi_1)\overline{\widehat{v_0}}(-\xi_2)}{\xi^2+\lambda\xi_1-\lambda\xi_2}\\
				&\qquad\qquad \cdot \xi^k(\xi_2-\xi_2^0-\xi_1+\xi_1^0)^k \Bigg\|_{L_\xi^2}\\
				&\lesssim \sum_{k = 0}^\infty \frac{C^k}{k!}N^{-1}\Bigg\|\int_{\xi_1+\xi_2 = \xi} \chi_{|\xi|\sim J_1\setminus J_2, \xi_j\in I_j}\frac{\widehat{u_0}(\xi_1)\overline{\widehat{v_0}}(-\xi_2)}{\xi^2+\lambda\xi_1-\lambda\xi_2} \Bigg\|_{L_\xi^2}\\
				&\lesssim N^{-1}\|u_0\|_{L^2}\|v_0\|_{L^2}.
			\end{align*}
			We conclude the proof of \eqref{claimsmo}. Then
			\begin{align*}
				\|N^{1/4}P_NF[P_{\sim N^2/\lambda}u_0]\|_{l^2_{N\gg 1}L_x^\infty L_t^2}&\lesssim \lambda^{-1/2}\|N^{1/4}N^{-1}\|P_{\sim N^2/\lambda}u_0\|_{L^2}^2\|_{l^2_N}\\
				&\lesssim \lambda^{-7/8}\|u_0\|_{H^{-3/16}}^2.
			\end{align*}
			Combining the nonresonant and low frequency cases,  we finish the proof.
		\end{proof}
		Recall the definition for the norm $Z$ by \eqref{normz}. We have:
		\begin{coro}\label{Finz}
			Combining Lemmas \ref{estiforF}, \ref{estiforF2}, and \ref{uusmooth} then $$\|F[u_0]\|_{Z}\lesssim \lambda^{-1}\|u_0\|_{H^{-3/16}}^2.$$
			%Note that $\|u_0\|_{H^{-3/16}}\lesssim \lambda^{21/16}$ due to rescaling. Thus $\|F[u_0]\|_Z\lesssim \lambda^{13/8}\ll 1$.
		\end{coro}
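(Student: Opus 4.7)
The plan is to verify each of the three terms in the definition \eqref{normz} of the $Z$-norm separately, combining Lemmas \ref{estiforF}, \ref{estiforF2}, and \ref{uusmooth} with only minor glue.

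For $\|N^{-3/4}P_N F[u_0]\|_{l^2_{N\geq 2}V^2_K}$, I would split
$$F[u_0] = (F[u_0]-F[P_{\sim N^2/\lambda}u_0]) + F[P_{\sim N^2/\lambda}u_0].$$
Lemma \ref{estiforF} controls the first summand in the stronger norm $l^2_N U^2_K$ by $\lambda^{-1}\|u_0\|_{H^{-3/16}}^2$, and this transfers to $l^2_N V^2_K$ via the continuous embedding $U^2_K \hookrightarrow V^2_K$. Lemma \ref{estiforF2} bounds the second summand directly in $l^2_N V^2_K$ by $\lambda^{-3/8}\|u_0\|_{H^{-3/16}}^2$, which is dominated by $\lambda^{-1}\|u_0\|_{H^{-3/16}}^2$ since $0<\lambda\ll 1$. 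The third term $\|N^{1/4}P_N F[u_0]\|_{l^2_N L_x^\infty L_t^2}$ is exactly the content of Lemma \ref{uusmooth}, so there is nothing more to do.

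The remaining low-frequency piece $\|P_{\lesssim 1}F[u_0]\|_{L_x^2 L_t^\infty}$ is the only part that requires any additional reasoning. I would apply Minkowski's integral inequality (valid because $\infty \geq 2$) to pass from $L_x^2 L_t^\infty$ to $L_t^\infty L_x^2$, and then invoke the standard embedding $V^2_K \hookrightarrow L_t^\infty L^2_x$ to reduce the task to a $V^2_K$ bound at frequency $\lesssim 1$. At this point the same decomposition used above and the same pair of lemmas yield the claim, since the proof of Lemma \ref{estiforF} explicitly contains the low-frequency branch $N \lesssim 1$ (leading in fact to a better power of $\lambda$).

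I do not expect any serious obstacle: the three preceding lemmas already encapsulate all the oscillatory-integral work (local smoothing, maximal function, Strichartz, the $\lambda N_1 \sim N^2$ resonant refinements of Lemmas \ref{resonrefi}--\ref{resonrefi3}), and only the Minkowski step plus the $U^2 \hookrightarrow V^2 \hookrightarrow L_t^\infty L^2_x$ embedding are needed to reassemble them into the $Z$-norm. Summing the three contributions produces $\|F[u_0]\|_Z \lesssim \lambda^{-1}\|u_0\|_{H^{-3/16}}^2$.
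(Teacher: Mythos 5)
Your treatment of the second and third components of the $Z$-norm is correct and is exactly what the corollary intends: split $F[u_0]=(F[u_0]-F[P_{\sim N^2/\lambda}u_0])+F[P_{\sim N^2/\lambda}u_0]$, use Lemma \ref{estiforF} with the embedding $U^2_K\hookrightarrow V^2_K$ for the first piece, Lemma \ref{estiforF2} for the second, and Lemma \ref{uusmooth} verbatim for the $l^2_NL_x^\infty L_t^2$ piece; the worst power $\lambda^{-1}$ dominates since $0<\lambda\ll 1$.

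However, your handling of the low-frequency term $\|P_{\lesssim 1}F[u_0]\|_{L_x^2L_t^\infty}$ contains a genuine error: Minkowski's integral inequality goes in the \emph{opposite} direction. For exponents $\infty\geq 2$ it gives $\|f\|_{L_t^\infty L_x^2}\leq \|f\|_{L_x^2L_t^\infty}$, so you cannot bound the $L_x^2L_t^\infty$ norm by the $L_t^\infty L_x^2$ norm, and consequently the embedding $V^2_K\hookrightarrow L_t^\infty L_x^2$ lands you in a strictly weaker space than the one you need to control. The correct route for this term is the maximal function estimate for the Airy group in its $U^2$-transferred form, $\|P_{\lesssim 1}v\|_{L_x^2L_t^\infty}\lesssim\|P_{\lesssim 1}v\|_{U^2_K}$ for time-localized $v$ (this transfer works for $U^2$ via the atomic decomposition, but is not available for $V^2$ in general), combined with the observation that the low-frequency branch $N\lesssim 1$ in the proof of Lemma \ref{estiforF} --- estimate \eqref{lowfreq} and duality --- actually controls \emph{all} of $P_NF[u_0]$ in $U^2_K$ for $N\lesssim 1$ (the sum there runs over all $N_1\sim N_2$, so no resonant piece is excluded at low output frequency), with the acceptable bound $\lambda^{-3/8}\|u_0\|_{H^{-3/16}}^2$. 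With that substitution the argument closes; as written, the low-frequency step does not.
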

		
		\section{Bilinear estimates for KdV equation}\label{birefforkd}
		We mainly use the argument in \cite{WangHHG}, \S 5.3.
		\begin{lemma}\label{lowfr}
			Let $N_1, N_2\geq 2$, $N_1\sim N_2$. Then
			\begin{align*}
				\|\eta P_{\lesssim 1}\mathscr{B}(P_{N_1}uP_{N_2}v)\|_{L_x^2L_t^\infty}\lesssim N_1^{-3/2} \|u\|_{V^2_{K}}\|v\|_{V^2_{K}}.
			\end{align*}
		\end{lemma}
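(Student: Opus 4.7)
The target norm $L^2_xL^\infty_t$ is genuinely a maximal-function norm; it is \emph{larger} than $L^\infty_tL^2_x$, so no cheap Minkowski reduction to an energy-type bound is available. The approach is to reduce to free KdV evolutions, integrate the Duhamel integral explicitly, split off a free part by partial fractions in the resonance function, and then combine a bilinear $L^2_{t,x}$ estimate with Bernstein in time, in the spirit of Lemma~\ref{uusmooth}.

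\textbf{Step 1 (Reduction).} By atomic decomposition of $V^2_K$ on $[-1,1]$, I reduce to $u(t)=K(t)u_0$ and $v(t)=K(t)v_0$ with $\|u_0\|_{L^2}\lesssim\|u\|_{V^2_K}$ and $\|v_0\|_{L^2}\lesssim\|v\|_{V^2_K}$. The output then becomes
$$\widehat{P_{\lesssim 1}\mathscr{B}(P_{N_1}uP_{N_2}v)}(t,\xi)=\chi_{|\xi|\lesssim 1}\int\chi\,\hat u_0(\xi_1)\hat v_0(\xi_2)\,\frac{e^{it(\xi_1^3+\xi_2^3)}-e^{it\xi^3}}{\Phi}\,\xi\,d\xi_1,$$
with $\xi_2=\xi-\xi_1$, $|\xi_j|\sim N_1$, and KdV resonance $\Phi=-3\xi_1\xi_2\xi$. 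Since $\xi/\Phi=-(3\xi_1\xi_2)^{-1}$ is nonsingular, I split the output into $F=F_1-F_2$, where $F_2=K(t)\psi$ is a free low-frequency KdV wave with $\hat\psi(\xi)=-\tfrac{i}{3}\chi_{|\xi|\lesssim 1}\int\chi\hat u_0\hat v_0/(\xi_1\xi_2)\,d\xi_1$, and $F_1=-\tfrac{1}{3}P_{\lesssim 1}(K(t)\tilde u_0\cdot K(t)\tilde v_0)$ with $\hat{\tilde u_0}(\xi_1)=\chi\hat u_0(\xi_1)/\xi_1$. Cauchy--Schwarz and $|\xi_j|\sim N_1$ give $\|\psi\|_{L^2}\lesssim N_1^{-2}\|u_0\|\|v_0\|$ and $\|\tilde u_0\|_{L^2},\|\tilde v_0\|_{L^2}\sim N_1^{-1}\|u_0\|,N_1^{-1}\|v_0\|$.

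\textbf{Step 2 (The free piece $F_2$).} Since $\psi$ has frequency support in $|\xi|\lesssim 1$, the pointwise-in-$x$ Sobolev embedding $H^1_t([0,1])\hookrightarrow L^\infty_t([0,1])$ together with $\partial_tK(t)\psi=-\partial_x^3K(t)\psi$ (bounded in $L^2$ by low frequency) yields $\|\eta F_2\|_{L^2_xL^\infty_t}\lesssim\|\psi\|_{L^2}\lesssim N_1^{-2}\|u\|_{V^2_K}\|v\|_{V^2_K}$, which is better than needed.

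\textbf{Step 3 (The interaction piece $F_1$, the main obstacle).} I need $\|\eta F_1\|_{L^2_xL^\infty_t}\lesssim N_1^{1/2}\|\tilde u_0\|\|\tilde v_0\|$. Dyadically decompose the output $P_{\lesssim 1}=\sum_{L\leq 1}P_{\sim L}$. On $|\xi|\sim L$ the time-Fourier support of $F_1$ lies in $|\tau|\lesssim\max(1,N_1^2L)$ (because $\xi_1^3+\xi_2^3=\xi^3+\Phi\sim N_1^2L$), so Bernstein-in-$t$ pointwise in $x$ gives the gain $(1+N_1^2L)^{1/2}$ from $L^2_t$ to $L^\infty_t$. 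Paired with the bilinear KdV Strichartz on the resonance surface, namely $\|P_{\sim L}(K(t)\tilde u_0\cdot K(t)\tilde v_0)\|_{L^2_{t,x}}\lesssim(N_1L)^{-1/2}\|\tilde u_0\|\|\tilde v_0\|$ (from Plancherel with Jacobian $\sim N_1L$), this produces
$$\|P_{\sim L}F_1\|_{L^2_xL^\infty_t}\lesssim (N_1^2L)^{1/2}(N_1L)^{-1/2}\|\tilde u_0\|\|\tilde v_0\|=N_1^{1/2}\|\tilde u_0\|\|\tilde v_0\|$$
for $1/N_1^2\lesssim L\lesssim 1$. The deeply-resonant tail $L\lesssim 1/N_1^2$ (where Bernstein is useless) is handled by the Taylor-expansion argument of Lemma~\ref{resonrefi3}, freezing the KdV phase to yield a contribution $\lesssim L^{1/2}\|\tilde u_0\|\|\tilde v_0\|$, summable over $L$. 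Summing the non-resonant annuli (at worst a logarithmic count, absorbable by slight refinement of the dyadic scales or by $\ell^2$-type orthogonality among shells of fundamentally different time-frequency support) gives the claimed $N_1^{-3/2}\|u\|_{V^2_K}\|v\|_{V^2_K}$ bound.

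\textbf{Main difficulty.} The crux is that Bernstein-in-$t$ costs $(N_1^2L)^{1/2}$ while the bilinear $L^2_{t,x}$ Strichartz gains $(N_1L)^{-1/2}$; these exactly balance to produce the needed $N_1^{1/2}$ factor per dyadic output annulus, and making this balance rigorous inside the $V^2_K$ framework---while controlling the sum over dyadic $L$'s and the deeply resonant tail $L\lesssim 1/N_1^2$---is the delicate bookkeeping, mirroring the proof structure of Lemma~\ref{uusmooth}.
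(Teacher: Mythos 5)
Your overall skeleton for the main interaction term is the right one and matches the paper's: integrate the Duhamel formula explicitly, split off the boundary term of the time integral, and recognize that the remaining piece is (up to constants) $P_{\lesssim 1}\bigl((\partial_x^{-1}P_{N_1}u)(\partial_x^{-1}P_{N_2}v)\bigr)$, which carries the decisive $N_1^{-2}$ gain. However, there is a genuine gap at Step 1: there is no atomic decomposition of $V^2_K$ into free evolutions. Only $U^2_K$ atoms are (piecewise) free solutions, $V^2\not\subset U^2$, and even the $U^2$ transfer principle does not apply to the composition of $\mathscr{B}$ with the $L^2_xL^\infty_t$ norm, since that is not a multilinear extension operator acting on the full time line. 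Everything hard in the paper's proof lives exactly in the space you have assumed away: the inputs are decomposed into modulation pieces $Q^K_{L_1}u$, $Q^K_{L_2}v$ with only $\|Q^K_L u\|_{L^2_{t,x}}\lesssim L^{-1/2}\|u\|_{V^2_K}$ available; the high-modulation regime $L_{\max}\geq N_1^{5/4}$ must be treated separately (your Bernstein-in-time step fails there, because the temporal frequency of the product is $\tau_1+\tau_2+\xi_1^3+\xi_2^3$ and the modulations $\tau_1,\tau_2$ are unbounded); and in the low-modulation regime the denominator $\tau_1+\tau_2+\xi_1^3+\xi_2^3-\xi^3$ must be replaced by $-3\xi_1\xi_2\xi$ via a Taylor expansion in $(\tau_1+\tau_2)/(\xi_1^3+\xi_2^3-\xi^3)$ whose convergence is precisely why the threshold $N_1^{5/4}$ (against the lower bound $N_1^2K\gtrsim N_1^{4/3}$) is chosen. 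None of this survives a reduction to $u=K(t)u_0$.

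A second, smaller problem is the summation in Step 3: your per-shell bound is exactly $N_1^{1/2}\|\tilde u_0\|\|\tilde v_0\|\sim N_1^{-3/2}\|u_0\|\|v_0\|$ with zero room, so summing over the $\sim\log N_1$ output shells $N_1^{-2/3}\lesssim L\lesssim 1$ loses a logarithm, and the proposed rescue by ``$\ell^2$-orthogonality'' does not work: after taking $\sup_t$, the shell contributions no longer have disjoint spatial Fourier supports, and $L^2_xL^\infty_t$ sees neither the spatial nor the temporal frequency separation. The paper sidesteps this entirely by never summing over output shells in this regime: it bounds $I'=\tilde c\,\eta P_{N_1^{-2/3}\leq\cdot\lesssim 1}\bigl((\partial_x^{-1}P_{N_1}Q^K_{<N_1^{5/4}}u)(\partial_x^{-1}P_{N_2}Q^K_{<N_1^{5/4}}v)\bigr)$ in one stroke by H\"older $L^4_xL^\infty_t\times L^4_xL^\infty_t\to L^2_xL^\infty_t$ together with the maximal function estimate $\|P_{N}f\|_{L^4_xL^\infty_t}\lesssim N^{1/4}\|f\|_{U^4_K}\lesssim N^{1/4}\|f\|_{V^2_K}$, giving $N_1^{-2}\cdot N_1^{1/4}\cdot N_1^{1/4}=N_1^{-3/2}$ directly. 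You would need to either adopt that route or extract a small power of $L$ from your interpolation to make the dyadic sum converge.
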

		\begin{proof}[\textbf{Proof}]
			If $N_1\lesssim 1$ we have
			\begin{align*}
				\|\eta P_{\lesssim 1}\mathscr{B}(P_{N_1}uP_{N_2}v)\|_{L_x^2L_t^\infty}&\lesssim \|\tilde{\eta}P_{N_1}uP_{N_2}v\|_{L_t^1L_x^2}\\
				&\lesssim \|P_{N_1}u\|_{L_t^\infty L^4_x}\|P_{N_2}u\|_{L^\infty_t L^4_x}\\
				&\lesssim N_1^{-3/2}\|u\|_{L^\infty
				_t L_x^2}\|v\|_{L^\infty_t L^2_x}.
			\end{align*}
			Thus we consider the case $N_1\sim N_2\gg 1$. Let $L_{\min} = \min\{L_1,L_2\}, L_{\max} = \max\{L_1,L_2\}$. We decompose $P_{\lesssim 1}$ into $\sum_{K\lesssim 1}P_K$. By maximal function estimate, Lemma 5.7, (5.37) and (5.38) in \cite{WangHHG} one has
			\begin{align*}
				&\quad\sum_{K\lesssim N_1^{-2/3}}\|\eta P_K(\mathscr{B}(P_{N_1}uP_{N_2}v))\|_{L_x^2L_t^\infty}\\
				&\lesssim \sum_{K\lesssim N_1^{-2/3}}\sum_{L_1,L_2,L}KL^{-1/2}\|Q_{L_1}^KP_{N_1} uQ_{L_2}^KP_{N_2}v\|_{L^2_{t,x}}\\
				&\lesssim \sum_{K\lesssim N_1^{-2/3}}K\sum_{L_1,L_2,L}L^{-1/2}\min\{L_{\min}^{1/2}K^{1/2},L_{\min}^{1/2}L^{1/2}N_1^{-1}\}\\
				&\hspace{80pt}\cdot\|P_{N_1}Q_{L_1}^Ku\|_{L^2_{t,x}}\|P_{N_2}Q_{L_2}^Kv\|_{L_{t,x}^2}\\
				&\lesssim \sum_{K\lesssim N_1^{-2/3}}K\sum_{L_1,L_2,L}(LL_{\max})^{-1/2}\min\{K^{1/2},L^{1/2}N_1^{-1}\}\|u\|_{V^2_K}\|v\|_{V^2_K}\\
				&\lesssim \sum_{K\lesssim N_1^{-2/3}}K\sum_{L}L^{-1/2}\min\{K^{1/2},L^{1/2}N_1^{-1}\}\|u\|_{V^2_K}\|v\|_{V^2_K}\\
				&\lesssim N_1^{-3/2}\|u\|_{V^2_K}\|v\|_{V^2_K}.
			\end{align*}
			If $K\geq N_1^{-2/3}$, we firstly manipulate the case $L_{\max}\geq N_1^{5/4}$. Similar to the former argument one has
			\begin{equation}\label{highmodu}
				\begin{aligned}
					&\quad\sum_{N_1^{-2/3}\leq K\lesssim 1}\sum_{L_{\max}\geq N_1^{5/4}}\|\eta P_K(\mathscr{B}(P_{N_1}Q^K_{L_1}uP_{N_2}Q^K_{L_2}v))\|_{L_x^2L_t^\infty}\\
					&\lesssim \sum_{N_1^{-2/3}\leq K\lesssim 1}\sum_{L_{\max}\geq N_1^{5/4},L}K(LL_{\max})^{-1/2}\min\{K^{1/2},L^{1/2}N_1^{-1}\}\|u\|_{V^2_K}\|v\|_{V^2_K}\\
					&\lesssim\sum_{L_{\max}\geq N_1^{5/4}}L_{\max}^{-1/2}N_1^{-1}\log(N_1)\|u\|_{V^2_K}\|v\|_{V^2_K}\\
					&\lesssim N_1^{-3/2}\|u\|_{V^2_K}\|v\|_{V^2_K}.
				\end{aligned}
			\end{equation}
			Thus we only need to estimate $L_{\max}< N_1^{5/4}$ for $N_1^{-2/3}\leq K\lesssim 1$. Let
			\begin{align*}
				f_{N_1,L_1}(\tau,\xi) &= \mathscr{F}_{t,x}(P_{N_1}Q^K_{L_1}u)(\tau+\xi^3,\xi),\\
				g_{N_2,L_2}(\tau,\xi) & = \mathscr{F}_{t,x}(P_{N_2}Q^K_{L_2}u)(\tau+\xi^3,\xi).
			\end{align*}
			Then
			\begin{align*}
				&\quad\mathscr{F}_{x}\left(\sum_{N_1^{-2/3}\leq K\lesssim 1}\sum_{L_{\max}< N_1^{5/4}}P_K(\eta\mathscr{B}(P_{N_1}Q^K_{L_1}uP_{N_2}Q^K_{L_2}v))\right)\\
				&\sim \sum_{N_1^{-2/3}\leq K\lesssim 1}\sum_{L_{\max}< N_1^{5/4}}\eta(t)\int_0^t\psi(\xi/K)\xi e^{i(t-s)\xi^3}\\
				&\quad\cdot\int_{\mathbb{R}^2_{\tau_1,\tau_2}}\int_{\xi = \xi_1+\xi_2}f_{N_1,L_1}(\tau_1,\xi_1)g_{N_2,L_2}(\tau_2,\xi_2)e^{it(\tau_1+\xi_1^3)+it(\tau_2+\xi_2^3)}\\
				&\sim \sum_{N_1^{-2/3}\leq K\lesssim 1}\sum_{L_{\max}< N_1^{5/4}}\eta(t)\psi(\xi/K)\xi\int_{\mathbb{R}^2_{\tau_1,\tau_2}}\int_{\xi = \xi_1+\xi_2} \\
				&\quad\cdot\frac{e^{it(\tau_1+\tau_2+\xi_1^3+\xi_2^3)}-e^{it\xi^3}}{\tau_1+\tau_2+\xi_1^3+\xi_2^3-\xi^3}f_{N_1,L_1}(\tau_1,\xi_1)g_{N_2,L_2}(\tau_2,\xi_2)\\
				&:=\mathscr{F}_xI-\mathscr{F}_xII.
			\end{align*}
%			Note that
%			\begin{align*}
%				&\quad II= \eta(t)K(t)\mathscr{F}^{-1}_\xi\int_{\mathbb{R}_{\tau_1,\tau_2}^2}\int_{\xi = \xi_1+\xi_2}\sum_{N_1^{-1/2}\leq K\leq 1}\sum_{L_{\max}\leq N^{3/2}}\frac{\psi(\xi/K)\xi f_{N_1,L_1}(\tau_1,\xi_1)g_{N_2,L_2}(\tau_2,\xi_2)}{\tau_1+\tau_2+\xi_1^3+\xi_2^3-\xi^3}.
%			\end{align*}
			Note that on the support set of $f_{N_1,L_1}(\tau_1,\xi_1)g_{N_2,L_2}(\tau_2,\xi_2)$ one has $|\tau_1+\tau_2+\xi_1^3+\xi_2^3-\xi^3|\sim N_1^2|\xi|$ since $L_{\max}< N_1^{5/4}$, $|\xi|\sim K\geq N_1^{-2/3}$.
			By the maximal function estimate and the Plancherel identity one has
			\begin{align*}
				&\quad\|II\|_{L_x^2L_t^\infty}\\
				&\sim \left\|\int_{\substack{\xi = \xi_1+\xi_2,\\\tau_1,\tau_2}}\sum_{N_1^{-2/3}\leq K\lesssim 1}\sum_{L_{\max}< N_1^{5/4}}\frac{\psi(\xi/K)\xi f_{N_1,L_1}(\tau_1,\xi_1)g_{N_2,L_2}(\tau_2,\xi_2)}{\tau_1+\tau_2+\xi_1^3+\xi_2^3-\xi^3}\right\|_{L^2_\xi}\\
				&\lesssim N_1^{-2} \int_{\mathbb{R}_{\tau_1,\tau_2}^2}\sum_{L_{\max}< N_1^{5/4}}\| f_{N_1,L_1}(\tau_1,\cdot)\|_{L^2}\|g_{N_2,L_2}(\tau_2,\cdot)\|_{L^2}\\
				&\lesssim N_1^{-2}\sum_{L_{\max}< N_1^{5/4}}L_1^{1/2}L_2^{1/2}\|f_{N_1,L_1}(\tau,\xi)\|_{L^2_{\tau,\xi}}\|f_{N_2,L_2}\|_{L^2_{\tau,\xi}}\\
				&\lesssim N_1^{-2}\sum_{L_{\max}<N_1^{5/4}}\|u\|_{V^2_K}\|v\|_{V^2_K}\lesssim N_1^{-3/2}\|u\|_{V^2_K}\|v\|_{V^2_K}.
			\end{align*}
			Define
			\begin{align*}
				\mathscr{F}_xI'&:= \sum_{N_1^{-2/3}\leq K\lesssim 1}\sum_{L_{\max}< N_1^{5/4}}\eta(t)\psi(\xi/K)\xi\int_{\mathbb{R}^2_{\tau_1,\tau_2}}\int_{\xi = \xi_1+\xi_2} \\
				&\hspace{80pt}\frac{e^{it(\tau_1+\tau_2+\xi_1^3+\xi_2^3)}}{\xi_1^3+\xi_2^3-\xi^3}f_{N_1,L_1}(\tau_1,\xi_1)g_{N_2,L_2}(\tau_2,\xi_2)\\
				&=\sum_{N_1^{-2/3}\leq K\lesssim 1}\sum_{L_{\max}< N_1^{5/4}}\eta(t)\psi(\xi/K)\int_{\xi = \xi_1+\xi_2} \\
				&\hspace{80pt}\frac{1}{-3\xi_1\xi_2}\mathscr{F}_x(P_{N_1}Q^K_{L_1}u)(t,\xi_1)\mathscr{F}_x(P_{N_2}Q^K_{L_2}v)(t,\xi_2).
			\end{align*}
			Let $f:= \sum_{L_1< N_1^{5/4}}f_{N_1,L_2}$, $g: = \sum_{L_2< N_1^{5/4}}g_{N_2,L_2}$.
			Then
			\begin{align*}
				\mathscr{F}_xI-\mathscr{F}_xI'
				&= \sum_{N_1^{-2/3}\leq K\lesssim 1}\sum_{L_{\max}<N_1^{5/4}}\sum_{k=1}^\infty\eta(t)\psi(\xi/K)\xi\int_{\mathbb{R}^2_{\tau_1,\tau_2}}\int_{\xi = \xi_1+\xi_2} \\
				&\quad\frac{(-1)^k(\tau_1+\tau_2)^ke^{it(\tau_1+\tau_2+\xi_1^3+\xi_2^3)}}{(\xi_1^3+\xi_2^3-\xi^3)^{k+1}}f_{N_1,L_1}(\tau_1,\xi_1)g_{N_2,L_2}(\tau_2,\xi_2).
			\end{align*}
%			\begin{align*}
%				\mathscr{F}_xI-\mathscr{F}_xI'
%				&= \sum_{N_1^{-1/2}\ll K\lesssim 1}\sum_{L_{\max}\leq N_1^{3/2}}\eta(t)\psi(\xi/K)\xi\int_{\mathbb{R}^2_{\tau_1,\tau_2}}\int_{\xi = \xi_1+\xi_2}\sum_{k=1}^\infty \\
%				&\quad\frac{(-1)^k(\tau_1+\tau_2)^ke^{it(\tau_1+\tau_2+\xi_1^3+\xi_2^3)}}{(\xi_1^3+\xi_2^3-\xi^3)^{k+1}}f_{N_1,L_1}(\tau_1,\xi_1)g_{N_2,L_2}(\tau_2,\xi_2)\\
%				& = .
%			\end{align*}
			Thus by the Strichartz estimate \cite{kenig1991oscillatory} one has
			\begin{align*}
				&\quad\|I-I'\|_{L_x^2L_t^\infty}\\
				&\lesssim \int_{\mathbb{R}^2_{\tau_1,\tau_2}}\sum_{k=1}^\infty  \sum_{N_1^{-2/3}\leq K\lesssim 1}\sum_{L_{\max}< N_1^{5/4}}|\tau_1+\tau_2|^kK^{-k}  \\
				&\cdot\left\|\eta(t)\mathscr{F}^{-1}_\xi\int_{\xi = \xi_1+\xi_2}\left(\frac{e^{it(\xi_1^3+\xi_2^3)}}{\xi_1^{k+1}\xi_2^{k+1}}f_{N_1,L_1}(\tau_1,\xi_1)g_{N_2,L_2}(\tau_2,\xi_2)\right)\right\|_{L^2_xL_t^\infty}\\
				&\lesssim \int_{\mathbb{R}^2_{\tau_1,\tau_2}}\sum_{k=1}^\infty  \sum_{N_1^{-2/3}\leq K\lesssim 1}\sum_{L_{\max}< N_1^{5/4}}N_1^{5k/4}K^{-k}  \\
				&\cdot\|\mathscr{F}^{-1}_\xi(e^{it\xi^3}f_{N_1,L_1}(\tau_1,\xi)/\xi^{k+1})\|_{L_x^4L_t^\infty}\|\mathscr{F}^{-1}_\xi(e^{it\xi^3}f_{N_2,L_2}(\tau_2,\xi)/\xi^{k+1})\|_{L_x^4L_t^\infty}\\
				&\lesssim \int_{\mathbb{R}^2_{\tau_1,\tau_2}}\sum_{k=1}^\infty  \sum_{L_{\max}< N_1^{5/4}}N_1^{23k/12-2(k+1)+1/2}\|f_{N_1,L_1}(\tau_1,\cdot)\|_{L^2}\|f_{N_2,L_2}(\tau_2,\cdot)\|_{L^2}\\
				&\lesssim \sum_{L_{\max}< N_1^{5/4}}N_1^{-37/12}(L_1L_2)^{1/2}\|Q_{L_1}^KP_{N_1}u\|_{L^2_{t,x}}\|Q_{L_2}^KP_{N_2}v\|_{L^2_{t,x}}\\
				&\lesssim\sum_{L_{\max}< N_1^{5/4}}N_1^{-37/12}\|u\|_{V^2_K}\|v\|_{V^2_K}\lesssim N^{-3/2}\|u\|_{V^2_K}\|v\|_{V^2_K}.
			\end{align*}
			Note that there exists constant $\tilde{c}$ such that
			$$I' = \tilde{c}\eta(t)P_{N^{-2/3}\leq \cdot\lesssim 1}((\partial_x^{-1}P_{N_1}Q^K_{< N_1^{5/4}}u)(\partial_x^{-1}P_{N_1}Q^K_{<N_1^{5/4}}v)).$$
			By the H\"{o}lder inequality we have
			\begin{align*}
				\|I'\|_{L_x^2L_t^\infty}&\lesssim \|\partial_x^{-1}P_{N_1}Q^K_{< N_1^{5/4}}u\|_{L_x^4L_t^\infty}\|\partial_x^{-1}P_{N_1}Q^K_{< N_1^{5/4}}v\|_{L_x^4L_t^\infty}\\
				&\lesssim N^{-2}\|P_{N_1}Q^K_{< N_1^{5/4}}u\|_{L_x^4L_t^\infty}\|P_{N_2}Q^K_{< N_1^{5/4}}v\|_{L_x^4L_t^\infty}\\
				&\lesssim N^{-3/2}\|Q^K_{< N_1^{5/4}}u\|_{U^4_K}\|Q^K_{< N_1^{5/4}}v\|_{U^4_K}\\
				&\lesssim N^{-3/2}\|Q^K_{< N_1^{5/4}}u\|_{V^2_K}\|Q^K_{< N_1^{5/4}}v\|_{V^2_K}\lesssim N^{-3/2}\|u\|_{V^2_K}\|v\|_{V^2_K}.
			\end{align*}
			We conclude the proof.
		\end{proof}
		\begin{prop}\label{uuF}
			$\|\eta\mathscr{B}(uv)\|_Y\lesssim \|u\|_Y\|v\|_Y$.
		\end{prop}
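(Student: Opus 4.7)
The plan is to split the output into its low-frequency piece $P_{\lesssim 1}\eta\mathscr{B}(uv)$ (controlled in $L_x^2L_t^\infty$) and its high-frequency pieces $P_N\eta\mathscr{B}(uv)$, $N\geq 2$ (controlled in $U^2_K$ with weight $N^{-3/4}$), and to handle each using dyadic decompositions of the inputs. Writing $u=P_1u+P_{>1}u$ and similarly for $v$ produces four cross-terms in each output regime; only the configurations $N_1\sim N_2$, and $\max(N_1,N_2)\sim N$, or $N_1\sim N_2\gtrsim N$, contribute nontrivially.

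First I would handle the low-frequency output. For the $P_{>1}u\cdot P_{>1}v$ part, Lemma \ref{lowfr} gives
\[
\|\eta P_{\lesssim 1}\mathscr{B}(P_{N_1}u\,P_{N_2}v)\|_{L_x^2L_t^\infty}\lesssim N_1^{-3/2}\|P_{N_1}u\|_{V^2_K}\|P_{N_2}v\|_{V^2_K},
\]
and since $\|P_{N_i}\cdot\|_{V^2_K}\lesssim N_i^{3/4}\cdot(\text{the }Y\text{-weighted }l^2\text{ coefficient})$, summing over $N_1\sim N_2\geq 2$ yields $\|u\|_Y\|v\|_Y$ by Cauchy--Schwarz. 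The remaining $P_1u\cdot P_1v$ (and mixed) pieces are trivial: apply Bernstein to absorb $\partial_x$, then H\"older in $L_t^1L_x^2$ using the $L_x^2L_t^\infty$ control of the $P_1$ factors (the other factor estimated by Sobolev embedding/$V^2_K\hookrightarrow L_t^\infty L_x^2$ for $P_{>1}$ inputs).

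Next I would handle each high-frequency output $P_N\eta\mathscr{B}(P_{N_1}u\,P_{N_2}v)$, $N\geq 2$, splitting into the three standard KdV regimes using Lemma 4.12 of \cite{banchenzhang}:
\emph{(i)} $N\gg\max(N_1,N_2)$ via (4.6), which gives modulation gain $\lambda^{-1/2}N^{-1}\log(N)$-type factors; \emph{(ii)} $N_1\sim N_2\gg N$ via (4.7), giving $N_1^{-1/2}$; \emph{(iii)} $N_1\sim N_2\sim N$ via (4.8), giving only a logarithmic factor. In each case the $U^2_K\times U^2_K\to U^2_K$ bilinear estimate is upgraded to $V^2_K\times V^2_K$ at a logarithmic cost through Proposition 2.20 of \cite{hadac2009well}. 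The $N^{-3/4}$ weight ensures dyadic summability in all three regimes: case (ii) gains $N_1^{-1/2}\cdot N_1^{3/4}N_2^{3/4}\cdot N^{-3/4}$, which with $N_1\sim N_2\gg N$ converges; case (i) sums trivially once one input is much larger; case (iii) is borderline and uses the method of \cite{WangHHG} \S 5.3, where one combines the Taylor-expansion/phase-cancellation argument (as in the proofs of Lemmas \ref{resonrefi3} and \ref{lowfr}) with the resonance identity $\xi^3-\xi_1^3-\xi_2^3=3\xi\xi_1\xi_2$ to absorb the logarithmic loss.

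The main obstacle is the fully resonant case $N_1\sim N_2\sim N$, which is exactly the sharp $H^{-3/4}$ KdV bilinear threshold; there the modulation lower bound $|\tau-\xi^3|+|\tau_1-\xi_1^3|+|\tau_2-\xi_2^3|\gtrsim N^3$ barely suffices and a direct $U^2\times V^2$ Cauchy--Schwarz produces a logarithm which must be absorbed by a finer decomposition along the level sets of $\xi_1\xi_2\xi$, exactly in the spirit of Lemmas \ref{resonrefi} and \ref{resonrefi3} (but now with both inputs free rather than linear evolutions of data). Once this case is closed, summing all regimes with the $N^{-3/4}$ weight and Cauchy--Schwarz in the dyadic coefficients gives the claimed bound $\lesssim\|u\|_Y\|v\|_Y$.
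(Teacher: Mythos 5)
There is a genuine gap, and it sits exactly where the real difficulty of this proposition lies: the high--high--to--low interaction $N_1\sim N_2\gg N\geq 2$. You dispose of it by quoting a gain of $N_1^{-1/2}$ and asserting that ``$N_1^{-1/2}\cdot N_1^{3/4}N_2^{3/4}\cdot N^{-3/4}$ converges''; but with $N_1\sim N_2$ this quantity is $\sim N_1\,N^{-3/4}$, which diverges badly in $N_1$. A gain of order $N_1^{-3/2}$ is what is actually needed to beat the two $N_1^{3/4}$ weights, and obtaining it is the heart of the paper's proof: one splits at the output modulation $L=cN_1^2N$, treats the low-output-modulation piece by putting high modulation on an input together with a transversal estimate (giving $N_1^{-2}N^{1/2}(\log N_1)^2$ with all factors in $V^2_K$), and treats the high-output-modulation piece by the $L^2$ transversal bound $\|P_N(P_{N_1}uP_{N_2}v)\|_{L^2_{t,x}}\lesssim (NN_1)^{-1/2}\|u\|_{U^2_K}\|v\|_{U^2_K}$, which requires the \emph{$U^2_K$} norms of the inputs. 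This last point is structurally important: it is the only place where the full $Y$-norm, rather than the weaker $Z$-norm, is used, and it is precisely why the paper must later prove Lemma \ref{thekey} and Propositions \ref{uftow}, \ref{u04tov} separately (since $F[u_0]$ lies in $Z$ but not in $Y$). Your outline misses this mechanism entirely.

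Two further points. First, you identify $N_1\sim N_2\sim N$ as the main obstacle and propose a Taylor-expansion/level-set refinement there; in fact the paper closes this case cleanly via Lemma 5.7 of \cite{WangHHG}, whose bound $L_{\min}^{1/2}L_{\mathrm{med}}^{1/4}N^{-1/4}$ together with $L_{\max}\gtrsim N^3$ gives $N^{-3/4}$ with all three factors in $V^2_K$ and \emph{no} logarithmic loss, so no interpolation or phase-cancellation argument is needed. Second, the estimates you invoke from Lemma 4.12 of \cite{banchenzhang} (with their $\lambda^{-1/2}N^{-1}$, $\lambda^{-1}N_1^{-1/2}$ factors and resonance condition $N^2\sim\lambda N_1$) pertain to Schr\"odinger--Schr\"odinger--KdV interactions; the pure KdV trilinear estimates needed here are those of \cite{WangHHG}, \S 5.3. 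You also leave the configuration $N\sim N_1\geq 2$, $N_2\lesssim 1$ unaddressed for the high-frequency output (the paper's term $I_N$, handled by local smoothing times the maximal function estimate, i.e.\ the $L_x^\infty L_t^2$ and $L_x^2L_t^\infty$ components of $Z$). The low-frequency output and the cases $N_1\sim N\gg N_2\gg 1$ (and its symmetric counterpart) in your sketch are essentially consistent with the paper.
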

		\begin{proof}[\textbf{Proof}]
			By Lemma \ref{lowfr} we have
			\begin{align*}
				&\quad\|\eta P_1\mathscr{B}(uv)\|_{L_x^2L_t^\infty}\\
				&\leq\|\eta P_1\mathscr{B}(P_{\lesssim 1}uP_{\lesssim 1}v)\|_{L_x^2L_t^\infty}+ \sum_{N_1\sim N_2\gg 1}\|\eta P_1\mathscr{B}(P_{N_1}uP_{N_2}v)\|_{L_x^2L_t^\infty}\\
				&\lesssim \|\tilde{\eta}P_{\lesssim 1}uP_{\lesssim 1}v\|_{L_t^1L_x^2}+\sum_{N_1\sim N_2\gg 1}N_1^{-3/2}\|P_{N_1}u\|_{V^2_K}\|P_{N_2}v\|_{V^2_K}\\
				&\lesssim \|P_{\lesssim 1}u\|_{L_t^\infty L_x^4}\|P_{\lesssim 1}v\|_{L_t^\infty  L_x^4}+\|u\|_Z\|v\|_Z\\
				&\lesssim \|u\|_Z\|v\|_Z.
			\end{align*}
			 Let $N\geq 2$. By the triangle inequality we have
			 \begin{align*}
			 	&\quad \|\eta P_N\mathscr{B}(uv)\|_{U^2_K}\\
			 	&\lesssim \sum_{N_1\sim N}\|\eta P_N\mathscr{B}(P_{N_1}uP_{\lesssim 1}v)\|_{U^2_K}+\sum_{N_2\sim N}\|\eta P_N\mathscr{B}(P_{\lesssim 1}uP_{N_2}v)\|_{U^2_K}\\
			 	&\quad + \sum_{N_1,N_2\gg 1}\|\eta P_N\mathscr{B}(P_{N_1}uP_{N_2}v)\|_{U^2_K}:=I_N+I'_N+II_N.
			 \end{align*}
			 For the term $I$ we use Proposition 5.12 in \cite{WangHHG}.
			 \begin{align*}
			 	\|I_N\|_{U^2_K}&\lesssim \sum_{N_1\sim N}\|\tilde{\eta}^2\partial_x(P_{N_1}uP_{\lesssim 1}v)\|_{L^2_{t,x}}\\
			 	&\lesssim \sum_{N_1\sim N}N\|\tilde{\eta}P_{N_1}u\|_{L_x^\infty L_t^2}\|\tilde{\eta}P_{\lesssim 1}v\|_{L_x^2L_t^\infty}.
			 \end{align*}
			 Then by the local smoothing estimate one has 
			 \begin{align*}
			 	\|N^{-3/4}I_N\|_{l^2_{N\geq 2}U^2_K}&\lesssim \|\tilde{\eta}N^{1/4}P_N u\|_{l^2_{N}L_x^\infty L_t^2}\|\tilde{\eta}P_{\lesssim 1}v\|_{L_x^2L_t^\infty}\\
			 	&\lesssim \|u\|_Z\|v\|_Z
			 \end{align*}
			 By symmetry we control $I'_N$ similarly.
			 
			 For $II_N$ we decompose the summation into four parts.
			 \begin{align*}
			 	II_N & =\sum_{N_1\sim N_2\sim N\gg 1}+\sum_{N_1\sim N\gg N_2\gg 1}+\sum_{N_2\sim N\gg N_1\gg 1}+\sum_{N_1\sim N_2\gg N}\cdots\\
			 	&:= II_N^{(1)}+II_N^{(2)}+II_N^{(3)}+II_N^{(4)}.
			 \end{align*}
			 For $N_1\sim N_2\sim N\gg 1$ we show
			 \begin{align*}
			 	\left|\int_{\mathbb{R}^2} \partial_x(P_{N_1}uP_{N_2}v)P_N \bar{w}~dxdt\right|\lesssim N^{-3/4}\|u\|_{V^2_K}\|v\|_{V^2_K}\|w\|_{V^2_K}
			 \end{align*} 
			 where $u,v,w$ are supported on $[-1,1]\times \mathbb{R}$. Let $L_{\min}, L_{\mathrm{med}}, L_{\max}$ be the maximum, medium and minimum among $L_1,L_2,L$. By Lemma 5.7 in \cite{WangHHG} we have
			 \begin{align*}
			 	&\quad \left|\int_{\mathbb{R}^2} \partial_x(P_{N_1}uP_{N_2}v)P_N \bar{w}~dxdt\right|\\
			 	&\leq \sum_{L_{\max}\gtrsim cN^3} \left|\int_{\mathbb{R}^2} \partial_x(Q_{L_1}^KP_{N_1}uQ^K_{L_2}P_{N_2}v)\overline{P_N Q_{L}^Kw}~dxdt\right|\\
			 	&\lesssim \sum_{L_{\max}\gtrsim cN^3}N L_{\min}^{1/2}L_{\mathrm{med}}^{1/4}N^{-1/4} \|Q_{L_1}^KP_{N_1}u\|_{L_{t,x}^2}\|Q_{L_2}^KP_{N_2}v\|_{L_{t,x}^2}\|Q_{L}^KP_{N}w\|_{L_{t,x}^2}\\
			 	&\lesssim  \sum_{L_{\max}\gtrsim cN^3} L_{\min}^{1/2}L_{\mathrm{med}}^{1/4}N^{3/4} (L_1L_2L)^{-1/2}\|u\|_{V_K^2}\|v\|_{V_K^2}\|w\|_{V_K^2}\\
			 	&\lesssim N^{-3/4}\|u\|_{V_K^2}\|v\|_{V_K^2}\|w\|_{V_K^2}
			 \end{align*}
			 Then by the duality we have
			 \begin{align*}
			 	\|N^{-3/4}II_N^{(1)}\|_{l^2_{N\geq2} U^2_K}&\lesssim \left\|N^{-3/2}\sum_{N_1\sim N_2\sim N\gg 1}\|P_{N_1}u\|_{V^2_K}\|P_{N_2}v\|_{V^2_K}\right\|_{l^2_{N\geq 2}}\\
			 	&\lesssim \|N^{-3/4}P_Nu\|_{l^2_{N\geq 2}V^2_K}\|N^{-3/4}P_Nv\|_{l^2_{N\geq 2}V^2_K}\\
			 	&\lesssim \|u\|_Z\|v\|_Z.
			 \end{align*}
			 For $N_1\sim N\gg N_2\gg 1$ we show
			 \begin{align*}
			 	\left|\int_{\mathbb{R}^2} \partial_x(P_{N_1}uP_{N_2}v)P_N \bar{w}~dxdt\right|\lesssim N^{-1/2}N_2^{1/4}\|u\|_{V^2_K}\|v\|_{V^2_K}\|w\|_{V^2_K}.
			 \end{align*}
			 In fact by Lemma 5.7 (5.38) in \cite{WangHHG} we have
			 \begin{align*}
			 	&\quad\left|\int_{\mathbb{R}^2} \partial_x(P_{N_1}uP_{N_2}v)P_N \bar{w}~dxdt\right|\\
			 	&\leq \sum_{L_1, L_2, L} \left|\int_{\mathbb{R}^2} \partial_x(Q_{L_1}^KP_{N_1}uQ^K_{L_2}P_{N_2}v)\overline{P_N Q_{L}^Kw}~dxdt\right|\\
			 	&\lesssim \sum_{L_1, L_2, L}N(L_1L_2L)^{1/3} N^{-1/2}(N^2N_2)^{-1/2}\\
			 	&\quad\cdot \|Q_{L_1}^KP_{N_1}u\|_{L_{t,x}^2}\|Q_{L_2}^KP_{N_2}v\|_{L_{t,x}^2}\|Q_{L}^KP_{N}w\|_{L_{t,x}^2}\\
			 	&\lesssim  \sum_{L_1, L_2, L}(L_1L_2L)^{1/3} N^{-1/2}N_2^{-1/2} (L_1L_2L)^{-1/2}\|u\|_{V_K^2}\|v\|_{V_K^2}\|w\|_{V_K^2}\\
			 	&\lesssim N^{-1/2}N_2^{-1/2}\|u\|_{V_K^2}\|v\|_{V_K^2}\|w\|_{V_K^2}.
			 \end{align*}
			 Then by  the duality we have
			 \begin{align*}
			 	\|N^{-3/4}II_N^{(2)}\|_{l^2_{N\geq2} U^2_K}&\lesssim \sum_{N_1\sim N\gg N_2\gg 1}N^{-5/4}N_2^{-1/2}\|P_{N_1}u\|_{V^2_K}\|P_{N_2}v\|_{V^2_K}\\
			 	&\lesssim \|N^{-3/4}P_Nu\|_{l^2_{N\geq 2}V^2_K}\|N^{-3/4}P_Nv\|_{l^2_{N\geq 2}V^2_K}\\
			 	&\lesssim \|u\|_Z\|v\|_Z.
			 \end{align*}
			 By the symmetry we can control $II_N^{(3)}$ similarly.
			 
			 Now, we estimate the term $II_N^{(4)}$. Let $N_1\sim N_2\gg N\geq 2$, $L = cN_1^2N$. Firstly we estimate
			 \begin{align*}
			 	\|\eta P_N\mathscr{B}(Q_{\leq L}^K(P_{N_1}u P_{N_2}v))\|_{U^2_K}.
			 \end{align*}
			 Typically we control 
			 \begin{align*}
			 	\int_{\mathbb{R}^2} \partial_x(Q_{>L}^KP_{N_1}uP_{N_2}v)\overline{Q_{\leq L}^KP_N w}~dxdt.
			 \end{align*}
			 By high modulation and transversal estimates, we have
			 \begin{align*}
			 	&\quad \left|\int_{\mathbb{R}^2} \partial_x(Q_{>L}^KP_{N_1}uP_{N_2}v)\overline{Q_{\leq L}P_N w}~dxdt\right|\\
			 	&\lesssim \|Q_{>L}^KP_{N_1}u\|_{L_{t,x}^2}\|P_{N_2}v\partial_xQ^K_{\leq L}P_N w\|_{L_{t,x}^2}\\
			 	&\lesssim L^{-1/2}\|u\|_{V^2_K}N_2^{-1}N\|v\|_{U^2_K}\|w\|_{U^2_K}\\
			 	&\sim N_1^{-2}N^{1/2}\|u\|_{V^2_K}\|v\|_{U^2_K}\|w\|_{U^2_K}.
			 \end{align*}
			 On the other hand by the Strichartz estimate we have
			 \begin{align*}
			 	&\quad \left|\int_{\mathbb{R}^2} \partial_x(Q^K_{>L}P_{N_1}uP_{N_2}v)\overline{Q^K_{\leq L}P_N w}~dxdt\right|\\
			 	&\lesssim \|Q^K_{>L}P_{N_1}u\|_{L_{t,x}^4}\|P_{N_2}v\|_{L_{t,x}^4}\|\partial_xQ^K_{\leq L}P_N w\|_{L_{t,x}^2}\\
			 	&\lesssim \|Q_{>L}^KP_{N_1}u\|_{L_{t,x}^2}^{1/4}\|Q_{>L}^KP_{N_1}u\|_{L_{t,x}^6}^{3/4}\|P_{N_2}v\|_{L_{t,x}^2}^{1/4}\|P_{N_2}v\|_{L_{t,x}^6}^{3/4}N\|w\|_{U^6_K}\\
			 	&\sim N_1^{-1/4}N\|u\|_{V^2_K}\|v\|_{U^6_K}\|w\|_{U^6_K}.
			 \end{align*}
			 Then, by the interpolation \cite{hadac2009well}, one has
			 \begin{align*}
			 	&\quad\left|\int_{\mathbb{R}^2} \partial_x(Q_{>L}^KP_{N_1}uP_{N_2}v)\overline{Q^K
			 		_{\leq L}P_N w}~dxdt\right|\\
			 	&\lesssim N_1^{-2}N^{1/2}(\log N_1)^2 \|u\|_{V^2_K}\|v\|_{V^2_K}\|w\|_{V^2_K}.
			 \end{align*}
			 Thus,
			 \begin{align*}
			 	&\quad \sum_{N_1\sim N_2\gg N\geq 2}N^{-3/4}\|\eta P_N\mathscr{B}(Q^K_{\leq L}(P_{N_1}u P_{N_2}v))\|_{U^2_K}\\
			 	&\lesssim \sum_{N_1\sim N_2\gg N\geq 2}N^{-3/4}N_1^{-2}N^{1/2}(\log N_1)^2 \|P_{N_1}u\|_{V^2_K}\|P_{N_2}v\|_{V^2_K}\\
			 	&\lesssim \|u\|_Z\|v\|_Z.
			 \end{align*}
			 Secondly we estimate $\|\eta P_N\mathscr{B}(Q^K_{\leq L}(P_{N_1}u P_{N_2}v))\|_{U^2_K}$. By high modulation and transversal estimates, we have
			 \begin{align*}
			 	&\quad \left|\int_{\mathbb{R}^2} \partial_x(P_{N_1}uP_{N_2}v)\overline{Q^K_{>L}P_N w}~dxdt\right|\\
			 	&\lesssim \|P_N(P_{N_1}uP_{N_2}v)\|_{L_{t,x}^2}\|\partial_xQ^K_{>L}P_N w\|_{L_{t,x}^2}\\
			 	&\lesssim (NN_1)^{-1/2}\|u\|_{U^2_K}\|v\|_{U^2_K}L^{-1/2}N\|w\|_{V^2_K}\\
			 	&\sim N_1^{-3/2}\|u\|_{U^2_K}\|v\|_{U^2_K}\|w\|_{V^2_K}.
			 \end{align*}
			 Then by the duality one has
			 \begin{align*}
			 	&\quad\sum_{N_1\sim N_2\gg N\geq 2}\|N^{-3/4}\eta P_N\mathscr{B}(Q^K_{\leq L}(P_{N_1}u P_{N_2}v))\|_{U^2_K}\\
			 	&\lesssim \sum_{N_1\sim N_2\gg N\geq 2} N^{-3/4}N_1^{-3/2}\|P_{N_1}u\|_{U^2_K}\|P_{N_2}v\|_{U^2_K}\\
			 	&\lesssim \|u\|_Y\|v\|_Y.
			 \end{align*}
			 We finish the proof of this proposition.
		\end{proof}
		In the proof of Proposition \ref{uuF}, we control many parts by $\|u\|_Z\|v\|_Z$ except $II_N^{(4)}$. Fortunately, we have the following lemma which can help us to control this case when $v = F[u_0]$.		
		\begin{lemma}\label{thekey}
			$2\leq N\ll N_1\sim N_2$.
			\begin{align*}
				&\|\eta P_N\mathscr{B}(P_{N_1}v P_{N_2}F[P_{\sim N_2^2/\lambda}u_0])\|_{U^2_K}\lesssim (\lambda N_1^3/N)^{-1/2} \|v\|_{U^2_K}\|u_0\|_{L^2}^2,\\
				&\|\eta P_N\mathscr{B}(P_{N_1}(F[P_{\sim N_1^2/\lambda}u_0]) P_{N_2}(F[P_{\sim N_2^2/\lambda}u_0]))\|_{U^2_K}\lesssim (\lambda^2N_1^3/N)^{-1/2} \|u_0\|_{L^2}^4.
			\end{align*}
		\end{lemma}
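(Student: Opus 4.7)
The plan is to use duality on the $U^2_K$ norm, the high–high–to–low KdV resonance, and the structural refinements of $F[P_{\sim N_j^2/\lambda}u_0]$ from Lemmas~\ref{resonrefi} and \ref{resonrefi3}. For the first bound, by duality,
\[\|\eta P_N\mathscr{B}(P_{N_1}v\cdot P_{N_2}F[P_{\sim N_2^2/\lambda}u_0])\|_{U^2_K}\lesssim \sup_{\|w\|_{V^2_K}\le 1}\left|\int_{\mathbb{R}^2}\partial_x(P_{N_1}v\cdot P_{N_2}F[P_{\sim N_2^2/\lambda}u_0])\cdot \tilde{\eta}P_N\bar{w}\,dxdt\right|.\]
In the regime $N\ll N_1\sim N_2$, the KdV resonance identity forces $|\tau_v-\xi_v^3|+|\tau_F-\xi_F^3|+|\tau-\xi^3|\gtrsim |3\xi\xi_v\xi_F|\sim NN_1^2$, so at least one of the three KdV modulations exceeds $cNN_1^2$, and I would split the integral accordingly.

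When the dominant modulation sits on $v$ or on $w$, I would combine the $L^{-1/2}$ high–modulation gain with the bilinear Airy $L^2_{t,x}$ bound $\|P_N(P_{N_1}u_1 P_{N_2}u_2)\|_{L^2_{t,x}}\lesssim (NN_1)^{-1/2}\|u_1\|_{U^2_K}\|u_2\|_{U^2_K}$ (from Lemma 4.12/4.2 in \cite{banchenzhang}) applied to the remaining factors, using $\|P_{N_2}F[P_{\sim N_2^2/\lambda}u_0]\|_{V^2_K}\lesssim \|u_0\|_{L^2}^2$ from the proof of Lemma~\ref{estiforF2}. When the dominant modulation instead sits on $F$, I would unpack its Schrödinger structure: decompose $P_{\sim N_2^2/\lambda}u_0=\sum_j P_{I_j}u_0$ into intervals $I_j$ of the sizes used in Lemma~\ref{resonrefi} (length $\lesssim N_2$ with Schrödinger–KdV phase $|\xi^3+\lambda\xi_1^2-\lambda\xi_2^2|\gtrsim N_2$) and Lemma~\ref{resonrefi3} (length $\sim 1/N_2$ with phase $\lesssim 1$). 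The former gives $\|P_{N_2}F[\cdots]\|_{U^2_K}\lesssim \lambda^{-1/2}\|u_0\|_{L^2}^2$, reducing to the previous case; the latter has Fourier support of measure $\sim N_2^{-2}$, providing compensating smallness.

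For the second bound, I would apply the first bound with $v$ replaced by $P_{N_1}F[P_{\sim N_1^2/\lambda}u_0]$. Using Lemma~\ref{resonrefi} for the Schrödinger non-resonant piece yields $\|P_{N_1}F[P_{\sim N_1^2/\lambda}u_0]\|_{U^2_K}\lesssim \lambda^{-1/2}\|u_0\|_{L^2}^2$, so the claim $(\lambda^2 N_1^3/N)^{-1/2}\|u_0\|_{L^2}^4$ follows by substitution; the resonant piece from Lemma~\ref{resonrefi3} must be threaded through the same modulation decomposition, but its small Fourier support again compensates.

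The main obstacle is the resonant regime of Lemma~\ref{resonrefi3}, where no clean $U^2_K$ bound with $\lambda^{-1/2}$ scaling is available and one must rely solely on the $|J|\sim 1/N_j^2$ Fourier support; tracking how this small-measure gain interacts with the $N$, $N_1$ and $\lambda$ weights through the cubic (and quartic) multilinear integral---especially in the second estimate, where two such resonant pieces may multiply---is the most delicate bookkeeping step.
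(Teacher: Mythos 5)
Your skeleton (duality, the high--high--to--low resonance $|\xi_1^3+\xi_2^3-\xi^3|\sim NN_1^2$, splitting by which factor carries the large modulation, and unpacking $F$ via the interval decompositions of Lemmas~\ref{resonrefi} and~\ref{resonrefi3}) matches the paper's opening moves, and you correctly locate the difficulty in the resonant part of $F$. But the part you flag as ``delicate bookkeeping'' is not bookkeeping --- it is the actual content of the proof, and your two-case dichotomy for $F$ does not cover it. Between the regime of Lemma~\ref{resonrefi} (Schr\"odinger--KdV resonance function $\gtrsim N_2$, giving a $U^2_K$ bound with a $\lambda^{-1/2}$ loss) and that of Lemma~\ref{resonrefi3} (resonance $\lesssim 1$, where the $|J_2|\sim N_2^{-2}$ support saves you) there is an intermediate band $1\lesssim|\xi^3+\lambda\xi_1^2-\lambda\xi_2^2|\lesssim N_2$ where neither lemma applies and the Fourier support is not small. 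The paper handles it by writing the Duhamel integral explicitly as $(e^{it\xi^3}-e^{i\lambda t\xi(\xi_2-\xi_1)})/(\xi^2+\lambda\xi_1-\lambda\xi_2)$ and splitting into $\Lambda_1$ (an Airy-phase piece, amenable to the KdV bilinear transversal estimate \eqref{tranvv}) and $\Lambda_2$ (a Schr\"odinger-type phase), and for $\Lambda_2$ it must Taylor-expand about the interval centers and verify a \emph{new} transversality relation, $\partial_{\xi_1}(\xi_1^3+\lambda(\xi-\xi_1)(\eta_2^0-\eta_1^0))\sim N_1^2$, before the stationary-phase/smoothing gain can be harvested. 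Nothing in your proposal produces this; relatedly, your plan to apply the bilinear Airy $L^2$ bound of Lemma 4.12 in \cite{banchenzhang} with $P_{N_2}F$ as one input is not licensed by that lemma, which requires $U^2_K$ control of both factors, while $F$ is only controlled in $V^2_K$ (the paper states explicitly before Lemma~\ref{estiforF2} that the desired $U^2_K$ bound for $F[P_{\sim N^2/\lambda}u_0]$ is unavailable).

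The second inequality is where the gap is structural rather than just unfinished. You propose to ``apply the first bound with $v$ replaced by $P_{N_1}F[P_{\sim N_1^2/\lambda}u_0]$,'' but the first bound carries $\|v\|_{U^2_K}$ on the right, and the resonant piece of $F$ is exactly the object for which no $U^2_K$ bound exists; so the substitution is inadmissible precisely on the part that matters. The paper instead reruns the whole argument for the product of two $F$'s and needs a genuinely new phase computation: for two Schr\"odinger-type phases $e^{i\lambda t\zeta(\zeta_2-\zeta_1)}$ and $e^{i\lambda t\eta(\eta_2-\eta_1)}$ it shows, using the intermediate-resonance condition \eqref{midresonant} and $|\zeta-\eta|\sim N_1$, that $\partial_\zeta\bigl(\lambda\zeta(\zeta_2^0-\zeta_1^0)+\lambda(\xi-\zeta)(\eta_2^0-\eta_1^0)\bigr)\sim NN_1$, which is what yields the $(NN_1)^{-1/2}$ factor in $\|P_N(P_{N_1}F\cdot P_{N_2}F)\|_{L^2_{t,x}}\lesssim(NN_1)^{-1/2}\|u_0\|_{L^2}^4$. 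This transversality between two non-Airy waves cannot be extracted from the first inequality as a black box, and without it the claimed $(\lambda^2N_1^3/N)^{-1/2}$ constant is not reachable by your route.
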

		\begin{proof}[\textbf{Proof}]
			By the duality, we only need to show
			\begin{align*}
				&\quad\left|\int_{\mathbb{R}^2} P_{N_1}v P_{N_2}(F[P_{\sim N_2^2/\lambda}u_0])\partial_xP_Nw~dxdt\right|\\
				&\lesssim (\lambda N_1^3/N)^{-1/2}\|v\|_{U^2_K}\|u_0\|_{L^2}^2\|w\|_{V^2_K}
			\end{align*}
			and
			\begin{align*}
				&\quad\left|\int_{\mathbb{R}^2} P_{N_1}(F[P_{\sim N_1^2/\lambda}u_0]) P_{N_2}(F[P_{\sim N_2^2/\lambda}u_0])\partial_xP_Nw~dxdt\right|\\
				&\lesssim (\lambda^2N_1^3/N)^{-1/2}\|u_0\|_{L^2}^4\|w\|_{V^2_K}
			\end{align*}
			where $v, w$ are supported on $[-1,1]\times \mathbb{R}$. Let $L = cNN_1N_2$. Firstly, we control
			\begin{align*}
				\left|\int_{\mathbb{R}^2} P_{N_1}v P_{N_2}(F[P_{\sim N_2^2/\lambda}u_0])Q_{\leq L}^K\partial_xP_Nw~dxdt\right|.
			\end{align*}
			Since for $|\xi_1|\sim N_1$, $|\xi_1|\sim N_2$, $|\xi|\sim N$, $\xi_1+\xi_2 = \xi$, one has $|\xi_1^3+\xi_2^3-(\xi_1+\xi_2)^3|\gtrsim L$. Typically, we need to control
			$$\left|\int_{\mathbb{R}^2} P_{N_1}v Q^K_{>L}P_{N_2}(F[P_{\sim N_2^2/\lambda}u_0])Q_{\leq L}^K\partial_xP_Nw~dxdt\right|.$$
			By transversal and high modulation estimates we can control this term by
			\begin{align*}
				&\quad\|P_{N_1}vQ_{\leq L}^K\partial_xP_Nw\|_{L^2_{t,x}}\|Q^K_{>L}P_{N_2}(F[P_{\sim N_2^2/\lambda}u_0])\|_{L^2_{t,x}}\\
				&\lesssim N_1^{-1}N\|v\|_{U^2_K}\|w\|_{U^2_K}L^{-1/2}\|P_{N_2}(F[P_{\sim N_2^2/\lambda}u_0])\|_{V^2_K}\\
				&\sim N_1^{-2}N^{1/2}\|v\|_{U^2_K}\|w\|_{U^2_K}\|P_{N_2}(F[P_{\sim N_2^2/\lambda}u_0])\|_{V^2_K}.
				%&\lesssim \lambda^{-3/8}N_1^{-2}N^{1/2}\|v\|_{U^2_K}\|w\|_{U^2_K}\|u_0\|_{L^2}^2.
			\end{align*}
			Recall that $v$ is supported on $[-1,1]\times \mathbb{R}$. Thus for any $p\geq 1$, one has $\|v\|_{L_{t,x}^2}\lesssim \|v\|_{L_t^\infty L_x^2}\lesssim \|v\|_{U^p}$.
			By the H\"{o}lder inequality and Strichartz estimates, 
			%and Lemma \ref{estiforF2}, 
			we can also control this term by
			\begin{align*}
				&\quad\|P_{N_1}v\|_{L_{t,x}^3}\|Q_{\leq L}^K\partial_x P_Nw\|_{L_{t,x}^6}\|Q_{>L}^KP_{N_2}(F[P_{\sim N_2^2}u_0])\|_{L^2_{t,x}}\\
				&\lesssim \|P_{N_1}v\|_{L^2_{t,x}}^{1/2}\|P_{N_1}v\|_{L^6_{t,x}}^{1/2}N\|P_Nw\|_{U^6_K}L^{-1/2}\|P_{N_2}(F[P_{\sim N_2^2}u_0])\|_{V^2_K}\\
				&\sim N_1^{-13/12}N^{1/2}\|v\|_{U^6_K}\|w\|_{U^6_K}\|P_{N_2}(F[P_{\sim N_2^2}u_0])\|_{V^2_K}.
				%&\lesssim \lambda^{-3/8}N_1^{-13/12}N^{1/2}\|v\|_{U^6_K}\|w\|_{U^6_K}\|u_0\|_{L^2}^2.
			\end{align*}
			Then, by the interpolation \cite{hadac2009well} and Lemma \ref{estiforF2}, one has
			\begin{align*}
				&\quad\left|\int_{\mathbb{R}^2} P_{N_1}v P_{N_2}(F[P_{\sim N_2^2/\lambda}u_0])Q_{\leq L}^K\partial_xP_Nw~dxdt\right|\\
				&\lesssim N_1^{-2}N^{1/2}(\log N_1)^2 \|v\|_{V^2_K}\|w\|_{V^2_K}\|P_{N_2}(F[P_{\sim N_2^2}u_0])\|_{V^2_K}
				\\
				&\lesssim \lambda^{-3/8}N_1^{-2}N^{1/2}(\log N_1)^2 \|v\|_{V^2_K}\|w\|_{V^2_K}\|P_{N_2}(F[P_{\sim N_2^2}u_0])\|_{V^2_K}.
			\end{align*}
			Similarly, one has
			\begin{align*}
				&\quad\left|\int_{\mathbb{R}^2} P_{N_1}(F[P_{\sim N_1^2/\lambda}u_0]) P_{N_2}(F[P_{\sim N_2^2/\lambda}u_0])Q_{\leq L}^K\partial_xP_Nw~dxdt\right|\\
				&\lesssim N_1^{-2}N^{1/2}(\log N_1)^2\|P_{N_1}(F[P_{\sim N_1^2/\lambda}u_0])\|_{V^2_K}\|P_{N_2}(F[P_{\sim N_1^2/\lambda}u_0])\|_{V^2_K}\|w\|_{V^2_K}\\
				&\lesssim \lambda^{-3/4}N_1^{-2}N^{1/2}(\log N_1)^2 \|u_0\|_{L^2}^4\|w\|_{V^2_K}.
			\end{align*}
			For the other part, by the H\"{o}lder inequality,
			\begin{align*}
				&\quad\left|\int_{\mathbb{R}^2} P_{N_1}v P_{N_2}(F[P_{\sim N_2^2/\lambda}u_0])Q_{> L}^K\partial_xP_Nw~dxdt\right|\\
				&\lesssim \|P_{N}(P_{N_1}vP_{N_2}(F[P_{\sim N_2^2/\lambda}u_0]))\|_{L^2_{t,x}}\|Q_{> L}^K\partial_xP_Nw\|_{L^2_{t,x}}\\
				&\lesssim NL^{-1/2}\|P_{N}(P_{N_1}vP_{N_2}(F[P_{\sim N_2^2/\lambda}u_0]))\|_{L^2_{t,x}}\|w\|_{V^2_K}.
			\end{align*}
			To conclude the proof of this lemma, we show
			\begin{align*}
				&\|P_{N}(P_{N_1}vP_{N_2}(F[P_{\sim N_2^2/\lambda}u_0]))\|_{L^2_{t,x}}\lesssim (\lambda NN_1)^{-1/2}\|v\|_{U^2_K}\|u_0\|_{L^2}^2,\\
				&\|P_N(P_{N_1}(F[P_{\sim N_1^2/\lambda}u_0])P_{N_2}(F[P_{\sim N_2^2/\lambda}u_0]))\|_{L^2_{t,x}}\lesssim (NN_1)^{-1/2}\|u_0\|_{L^2}^4.
			\end{align*}
			To show the first one, we only need to show
			$$\|P_{N}(P_{N_1}K(t)v_0P_{N_2}(F[P_{\sim N_2^2/\lambda}u_0]))\|_{L^2_{t,x}}\lesssim (\lambda NN_1)^{-1/2}\|v_0\|_{L^2}\|u_0\|_{L^2}^2.$$
			Let $I_1, I_2$ be intervals included in $[-CN_2^2/\lambda,-cN_2^2/\lambda]\cup [cN_2^2/\lambda,CN_2^2/\lambda]$ with length $cN_2$. Also, we can assume that for any $\eta_1\in I_1$, $\eta_2\in I_2$ one has $|\eta_1-\eta_2|\sim N_2$. It is equivalent to
			\begin{align*}
				&\quad\left\|P_N\left(P_{N_1}K(t)v_0\tilde{\eta}P_{N_2}\mathscr{B}(S_\lambda(t)P_{I_1} u_0 \overline{S_\lambda(t)P_{I_2} u_0})\right)\right\|_{L^2_{t,x}}\\
				&\lesssim (\lambda NN_1)^{-1/2}\|v_0\|_{L^2}\|u_0\|_{L^2}^2.
			\end{align*}
			There exists a interval $J$ included in $[-CN_2,-cN_2]\cup [cN_2,CN_2]$ with length $1$ such that one has $|\xi_2^3+\lambda\eta_1^2-\lambda \eta_2^2|\gtrsim N_2^2$ for any $\xi_2\in [-CN_2,-cN_2]\cup [cN_2,CN_2]\setminus J:= \hat{J}$, $\eta_1\in I_1, \eta_2\in I_2,\eta_1+\eta_2 = \xi_2$. Then by Lemma \ref{resonrefi} and
			\begin{equation}\label{tranvv}
				\|P_N(P_{N_1}K(t)v_0P_{N_2}K(t)v_1)\|_{L^2_{t,x}}\lesssim (NN_1)^{-1/2}\|v_0\|_{L^2}\|v_1\|_{L^2},
			\end{equation}
			we can control this part. Thus we only need to show
			\begin{align*}
				&\quad\left\|P_N\left(P_{N_1}K(t)v_0\tilde{\eta}P_{J}\mathscr{B}(S_\lambda(t)P_{I_1} u_0 \overline{S_\lambda(t)P_{I_2} u_0})\right)\right\|_{L^2_{t,x}}\\
				&\lesssim (\lambda NN_1)^{-1/2}\|v_0\|_{L^2}\|u_0\|_{L^2}^2.
			\end{align*}
			By the orthogonality, we can assume that $I_1, I_2$ are intervals with length $1$.
			
			There exists a interval $J_1$ included in $J$ with length at most $1/N_2$ such that one has $|\xi_2^3+\lambda\eta_1^2-\lambda \eta_2^2|\gtrsim N_2$ for any $\eta\in J\setminus J_1$, $\eta_1\in I_1, \eta_2\in I_2,\eta_1+\eta_2 = \eta$. By Lemma \ref{resonrefi} and the former argument we only need to show
			\begin{align*}
				&\quad\left\|P_N\left(P_{N_1}K(t)v_0\tilde{\eta}P_{J_1}\mathscr{B}(S_\lambda(t)P_{I_1} u_0 \overline{S_\lambda(t)P_{I_2} u_0})\right)\right\|_{L^2_{t,x}}\\
				&\lesssim (\lambda NN_1)^{-1/2}\|v_0\|_{L^2}\|u_0\|_{L^2}^2
			\end{align*}
			where $I_1, I_2$ are intervals with length $1/N_2$.
			
			There exists a interval $J_2$ included in $J_1$ with length $1/N_2^2$ such that one has $|\xi_2^3+\lambda\eta_1^2-\lambda \eta_2^2|\lesssim 1$ for any $\xi_2\in J_2$, $\eta_1\in I_1, \eta_2\in I_2,\eta_1+\eta_2 = \xi_2$. By Lemma \ref{resonrefi3} and following the former argument we only need to show
			\begin{align*}
				&\quad\left\|P_N\left(P_{N_1}K(t)v_0\tilde{\eta}P_{J_1\setminus J_2}\mathscr{B}(S_\lambda(t)P_{I_1} u_0 \overline{S_\lambda(t)P_{I_2} u_0})\right)\right\|_{L^2_{t,x}}\\
				&\lesssim (\lambda NN_1)^{-1/2}\|v_0\|_{L^2}\|u_0\|_{L^2}^2
			\end{align*}
			where $I_1, I_2$ are intervals with length $1/N_2$. Note that
			\begin{align*}
				&\quad \mathscr{F}_x\left(P_N\left(P_{N_1}K(t)v_0\tilde{\eta}P_{J_1\setminus J_2}\mathscr{B}(S_\lambda(t)P_{I_1} u_0 \overline{S_\lambda(t)P_{I_2} u_0})\right)\right)\\
				&\sim \int_{\substack{\xi
						_1+\xi_2 = \xi,\\\eta_1+\eta_2 =\xi_2}}M(\xi,\xi_1,\eta_1)\widehat{v_0}(\xi_1)\widehat{u_0}(\eta_1)\overline{\widehat{u_0}}(-\eta_2)\tilde{\eta}(e^{it(\xi_1^3+\xi_2^3)}-e^{it(\xi_1^3+\lambda\xi_2(\eta_2-\eta_1))})\\
				&:= \Lambda_1+\Lambda_2.
			\end{align*}
			where $M(\xi,\xi_1,\eta_1) = \chi_{|\xi|\sim N,|\xi_1|\sim N_1, \xi_2\in J_1\setminus J_2,\eta_1\in I_1,\eta_2\in I_2}/(\xi_2^2+\lambda\eta_1-\lambda\eta_2)$. Note that $M(\xi,\xi_2,\eta_1)\lesssim N_2$. For $\Lambda_1$, by \eqref{tranvv} and the Plancherel identity we have
			\begin{align*}
				\|\Lambda_1\|_{L_t^2L_\xi^2}&\lesssim (NN_1)^{-1/2}\|v_0\|_{L^2}\\
				&\quad\cdot\left\|\int_{\eta_1+\eta_2 = \xi_2}\frac{\widehat{u_0}(\eta_1)\overline{\widehat{u_0}}(-\eta_2)}{\xi_2^2+\lambda\eta_1-\lambda\eta_2}\chi_{\xi_2\in J_1\setminus J_2,\eta_1\in I_1,\eta_2\in I_2}\right\|_{L^2_{\xi_2}}\\
				&\lesssim (NN_1)^{-1/2}\|v_0\|_{L^2} \|u_0\|_{L^2}^2\left\|\frac{\chi_{\xi_2\in J_1\setminus J_2,\eta_1\in I_1,\eta_2\in I_2}}{\xi_2^2+2\lambda\eta_1-\lambda\xi_2}\right\|_{L^\infty_{\eta_1}L^2_{\xi_2}}\\
				&\lesssim (NN_1)^{-1/2}\|v_0\|_{L^2} \|u_0\|_{L^2}^2.
			\end{align*}
			Let $\eta_j^0$ be the center of $I_j$, $j = 1,2$. Note that
			\begin{align*}
				\partial_{\xi_1}(\xi_1^3+\lambda(\xi-\xi_1)(\eta_2^0-\eta_1^0)) &= 3\xi_1^2-\lambda(\eta_2^0-\eta_1^0)\\
				&= 3\xi_1^2-\xi_2^2+\xi_2^2-\lambda(\eta_2^0-\eta_1^0)\\
				& = 2\xi_1^2+\xi(\xi_1-\xi_2)+\xi_2^2-\lambda(\eta_2^0-\eta_1^0)\sim N_1^2.
			\end{align*}
			Thus by the Taylor expansion and the transversal estimate one has
			\begin{align*}
				&\quad\|\Lambda_2\|_{L_t^2L_\xi^2}\\
				&\lesssim \sum_{k=0}^\infty \Bigg\|\int_{\substack{\xi_1+\xi_2 = \xi,\\ \eta_1+\eta_2 = \xi_2}}M(\xi,\xi_1,\eta_1)\widehat{v_0}(\xi_1)\widehat{u_0}(\eta_1)\overline{\widehat{u_0}}(-\eta_2)e^{it(\xi_1^3+\lambda\xi_2(\eta_2^0-\eta_1^0))}\\
				&\qquad\qquad\cdot\tilde{\eta}\frac{(i\lambda t\xi_2(\eta_2-\eta_2^0-\eta_1+\eta_1^0))^k}{k!}\Bigg\|_{L_t^2L_\xi^2}\\
				&\lesssim \sum_{k=0}^\infty\frac{1}{k!} \Bigg\|\int_{\substack{\xi_1+\xi_2 = \xi,\\ \eta_1+\eta_2 = \xi_2}}M(\xi,\xi_1,\eta_1)\widehat{v_0}(\xi_1)\widehat{u_0}(\eta_1)\overline{\widehat{u_0}}(-\eta_2)e^{it(\xi_1^3+\lambda\xi_2(\eta_2^0-\eta_1^0))}\\
				&\qquad\qquad\cdot(\xi_2(\eta_2-\eta_2^0-\eta_1+\eta_1^0))^k\Bigg\|_{L_t^2L_\xi^2}\\
				&\lesssim \sum_{k=0}^\infty\frac{1}{k!} N_1^{-1}\|v_0\|_{L^2}\Bigg\|\int_{\eta_1+\eta_2 = \xi_2} \frac{\widehat{u_0}(\eta_1)\overline{\widehat{u_0}}(-\eta_2)}{\xi_2^2+\lambda\eta_1-\lambda\eta_2}\\
				&\qquad\qquad\cdot\chi_{\xi_2\in J_1\setminus J_2,\eta_1\in I_1,\eta_2\in I_2}(\xi_2(\eta_2-\eta_2^0-\eta_1+\eta_1^0))^k\Bigg\|_{L_{\xi_2}^2}\\
				&\lesssim \sum_{k=0}^\infty\frac{C^k}{k!} N_1^{-1}\|v_0\|_{L^2}\Bigg\|\int_{\eta_1+\eta_2 = \xi_2} \frac{|\widehat{u_0}(\eta_1)\widehat{u_0}(-\eta_2)|}{\xi_2^2+\lambda\eta_1-\lambda\eta_2}\chi_{\xi_2\in J_1\setminus J_2,\eta_1\in I_1,\eta_2\in I_2}\Bigg\|_{L_{\xi_2}^2}\\
				&\lesssim N_1^{-1}\|v_0\|_{L^2}\|u_0\|_{L^2}^2.
			\end{align*}
			Thus we conclude the proof of the first inequality.
			
			For the second one, similar to the former argument, we reduce it to
			\begin{align*}
				&\Bigg\|\int_{\zeta_1+\zeta_2 +\eta_1+\eta_2 = \xi}\frac{\tilde{\eta}\chi_{|\xi|\sim N}M_1(\zeta_1,\zeta_2)M_2(\eta_1,\eta_2)\widehat{u_0}(\zeta_1)\widehat{u_0}(\zeta_2)\widehat{u_0}(\eta_1)\widehat{u_0}(\eta_2)}{((\zeta_1+\zeta_2)^2+\lambda\zeta_1-\lambda\zeta_2)((\eta_1+\eta_2)^2+\lambda\eta_1-\lambda\eta_2)}\\
				&\qquad \cdot (e^{it(\zeta_1+\zeta_2)^2}-e^{i\lambda t(\zeta_1+\zeta_2)(\zeta_2-\zeta_1)})(e^{it(\eta_1+\eta_2)^2}-e^{i\lambda t(\eta_1+\eta_2)(\eta_2-\eta_1)})\Bigg\|_{L_t^2L_\xi^2}\\
				&\lesssim (NN_1)^{-1/2}\|u_0\|_{L^2}^4
			\end{align*}
			where $M_1(\zeta_1,\zeta_2) = \chi_{ \zeta_1+\zeta_2\in J,\zeta_1\in I_1,\zeta_2\in I_2}, M_2(\eta_1,\eta_2) = \chi_{ \eta_1+\eta_2\in \tilde{J},\eta_1\in \tilde{I}_1,\eta_2\in \tilde{I}_2}$, $I_1, I_2, \tilde{I}_1, \tilde{I}_2$ are intervals included in $[-CN_1^2,-cN_1^2]\cup[cN_1^2,CN_1^2]$, $J, \tilde{J}$ are intervals included in $[-CN_1,cN_1]\cup  [cN_1,CN_1]$ and
			\begin{equation}\label{midresonant}
				N_1^{-1}\leq |(\zeta_1+\zeta_2)^2+\lambda\zeta_1-\lambda\zeta_2|, |(\eta_1+\eta_2)^2+\lambda\eta_1-\lambda\eta_2|\ll N_1.
			\end{equation}
			Recall the former argument. Let $\zeta_1+\zeta_2 = \zeta,\eta_1+\eta_2 = \eta$, we only need to show
			\begin{align*}
				&\Bigg\|\int_{\zeta +\eta = \xi}\frac{\tilde{\eta}\chi_{|\xi|\sim N}M_1(\zeta_1,\zeta_2)M_2(\eta_1,\eta_2)\widehat{u_0}(\zeta_1)\widehat{u_0}(\zeta_2)\widehat{u_0}(\eta_1)\widehat{u_0}(\eta_2)}{(\zeta^2+\lambda\zeta_1-\lambda\zeta_2)(\eta^2+\lambda\eta_1-\lambda\eta_2)}\\
				&\qquad \cdot e^{i\lambda t(\zeta(\zeta_2-\zeta_1)+\eta(\eta_2-\eta_1))}\Bigg\|_{L_t^2L_\xi^2}\\
				&\lesssim (NN_1)^{-1/2}\|u_0\|_{L^2}^4
			\end{align*}
			Let $\zeta_j^0, \eta_j^0$ be the center of $I_j,\tilde{I}_j$ respectively. Note that
			\begin{align*}
				&\quad\partial_{\zeta}(\lambda \zeta(\zeta_2^0-\zeta_1^0)+\lambda(\xi-\zeta)(\eta_2^0-\eta_1^0))\\
				& = \lambda\zeta_2^0-\lambda\zeta_1^0-\lambda(\eta_2^0-\eta_1^0)\\
				& = \zeta^2-\eta^2+\lambda\zeta_2^0-\lambda\zeta_1^0-\zeta^2+\eta^2-\lambda (\eta_2^0-\eta_1^0)\\
				& = \xi(\zeta-\eta)-(\zeta^2+\lambda\zeta_1^0-\lambda\zeta_2^0)+(\eta^2-\lambda \eta_2^0+\eta_1^0).
			\end{align*}
			By \eqref{midresonant}, $|\xi|\sim N, |\zeta-\eta|\sim N_1$, we obtain
			$$|\partial_{\zeta}(\lambda \zeta(\zeta_2^0-\zeta_1^0)+\lambda(\xi-\zeta)(\eta_2^0-\eta_1^0))|\sim NN_1.$$
			Following the same argument for the first inequality, by the Taylor expansion and the transversal estimate, we conclude the proof.
		\end{proof}
		\begin{prop}\label{uftow}
			$\|\eta\mathscr{B}(uF[u_0])\|_{Y}\lesssim \lambda^{-1}\|u\|_{Y}\|u_0\|_{H^{-3/16}}^2$.
		\end{prop}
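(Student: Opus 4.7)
The plan is to mimic the dyadic decomposition used in the proof of Proposition \ref{uuF} with $v = F[u_0]$, tracking the single case in which that argument asks for more than $F[u_0]\in Z$. Inspecting Proposition \ref{uuF}, the low-frequency output, the pieces $I_N$, $I_N'$, $II_N^{(1)}$, $II_N^{(2)}$, $II_N^{(3)}$, and the ``Firstly'' half of $II_N^{(4)}$ are all closed by trilinear $V^2_K$ bounds on the two inputs. Each of those contributions therefore terminates in an estimate of the form $\|u\|_Y\|F[u_0]\|_Z$, and by Corollary \ref{Finz} we have $\|F[u_0]\|_Z\lesssim \lambda^{-1}\|u_0\|_{H^{-3/16}}^2$, which already meets the claim.

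The only obstructing case is the ``Secondly'' subcase of $II_N^{(4)}$, namely $N_1\sim N_2\gg N\geq 2$ with the high-modulation output projection $Q^K_{>L}$, where Proposition \ref{uuF} pairs two $U^2_K$ factors. Since $F[u_0]$ does not sit in $U^2_K$ uniformly, I would split
\begin{equation*}
P_{N_2}F[u_0] = P_{N_2}\bigl(F[u_0]-F[P_{\sim N_2^2/\lambda}u_0]\bigr) + P_{N_2}F[P_{\sim N_2^2/\lambda}u_0].
\end{equation*}
By Lemma \ref{estiforF} the first summand does lie in $U^2_K$ with $\ell^2_{N_2}$ weight controlled by $\lambda^{-1}\|u_0\|_{H^{-3/16}}^2$. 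Plugging this into the ``Secondly'' bound $N_1^{-3/2}\|P_{N_1}u\|_{U^2_K}\|P_{N_2}v\|_{U^2_K}$ and applying Minkowski in $N$ followed by Cauchy--Schwarz in $N_1\sim N_2$ closes the sum to $\lambda^{-1}\|u\|_Y\|u_0\|_{H^{-3/16}}^2$.

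The remaining piece $P_{N_2}F[P_{\sim N_2^2/\lambda}u_0]$ is handled by the first inequality in Lemma \ref{thekey}:
\begin{equation*}
\|\eta P_N\mathscr{B}(P_{N_1}u\,P_{N_2}F[P_{\sim N_2^2/\lambda}u_0])\|_{U^2_K}\lesssim (\lambda N_1^3/N)^{-1/2}\|P_{N_1}u\|_{U^2_K}\|P_{\sim N_2^2/\lambda}u_0\|_{L^2}^2.
\end{equation*}
The crucial observation is that the frequency of $u_0$ is pinned to the dyadic block $M\sim N_2^2/\lambda$, so with $\nu_M:=M^{3/16}\|P_Mu_0\|_{L^2}$ we get $\|P_{\sim N_2^2/\lambda}u_0\|_{L^2}^2\leq (N_2^2/\lambda)^{3/8}\nu^2_{N_2^2/\lambda}$. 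Minkowski in $N$ and then Cauchy--Schwarz in $N_1\sim N_2$, together with $\|\nu\|_{\ell^\infty}\leq \|\nu\|_{\ell^2}=\|u_0\|_{H^{-3/16}}$, convert the dyadic sums into a bound $\lambda^{-7/8}\|u\|_Y\|u_0\|_{H^{-3/16}}^2$, which is safely within $\lambda^{-1}$.

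The main obstacle is organizing this ``Secondly'' portion of $II_N^{(4)}$: neither half of the split $F[u_0]$ is controlled by Proposition \ref{uuF} alone, and the whole purpose of the preparatory Lemmas \ref{estiforF} and \ref{thekey} is exactly to treat these two parts separately. Once the decomposition is in place, the dyadic sums are essentially routine and the powers of $\lambda$ leave room to spare under the target $\lambda^{-1}$.
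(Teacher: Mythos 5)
Your proposal is correct and follows essentially the same route as the paper: reduce via the proof of Proposition \ref{uuF} and Corollary \ref{Finz} to the high-modulation-output subcase of $II_N^{(4)}$, split off $F[P_{\sim N_2^2/\lambda}u_0]$ using Lemma \ref{estiforF} for the difference, and close the resonant remainder with Lemma \ref{thekey}, landing at $\lambda^{-7/8}$ as in the paper. The only blemish is the sign in your definition of $\nu_M$ (it should be $M^{-3/16}\|P_Mu_0\|_{L^2}$ so that $\|\nu\|_{\ell^2}=\|u_0\|_{H^{-3/16}}$), which your subsequent computation already uses correctly.
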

		\begin{proof}[\textbf{Proof}]
%			By Lemma \ref{estiforF} and  Proposition \ref{uuF}, we only need to show
%			\begin{align*}
%				&\quad\sum_{N_1\sim N_2\gg N\geq 2}N^{-3/4}\|\eta P_N\mathscr{B}(P_{N_1}uP_{N_2}F[P_{\sim N_2^2}u_0])\|_{U^2_K}\\
%				&\lesssim \lambda^{-1/2}\|u\|_Y\|u_0\|_{H^{-3/16}}^2.
%			\end{align*}
			By the proof of Proposition \ref{uuF} and Lemma \ref{Finz}, we only need to control the following term 
			$$\sum_{N_1\sim N_2\gg N\geq 2}\|\eta P_N\mathscr{B}(P_{N_1}uP_{N_2}F[P_{\sim N_2^2/\lambda}u_0])\|_{U^2_K}.$$
			%$II_N^{(4)}$.
			By Lemma \ref{thekey}, we can control it by
			\begin{align*}
				&\quad \sum_{N_1\sim N_2\gg N\geq 2} N^{-3/4}N_1^{-3/2}N^{1/2}\lambda^{-1/2}\|P_{N_1}u\|_{U^2_K}\|P_{\sim N_2^2/\lambda}u_0\|_{L^2}^2\\
				&\lesssim \lambda^{-7/8}\|u\|_Y\|u_0\|_{H^{-3/16}}^2.
			\end{align*}
			Combining Corollary \ref{Finz} and  Proposition \ref{uuF}, we conclude the proof.
		\end{proof}
		\begin{prop}\label{u04tov}
			$\|\eta\mathscr{B}(F[u_0]^2)\|_{Y}\lesssim \lambda^{-2}\|u_0\|_{H^{-3/16}}^4$.
		\end{prop}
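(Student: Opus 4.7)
The plan is to parallel the proof of Proposition \ref{uftow} and to decompose $\eta\mathscr B(F[u_0]^2)$ along the same frequency template as in the proof of Proposition \ref{uuF}. The contributions $P_1\mathscr B$, $I_N$, $I_N'$, $II_N^{(1)}$, $II_N^{(2)}$, $II_N^{(3)}$ were all shown there to be controlled by the $Z$-norms of the two factors. With both factors equal to $F[u_0]$, Corollary \ref{Finz} converts this immediately into $\lesssim \lambda^{-2}\|u_0\|_{H^{-3/16}}^4$, the desired bound.

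The only outstanding piece is the resonant block $II_N^{(4)}$ with $2\le N\ll N_1\sim N_2$. For each $j\in\{1,2\}$ I would split
\[
P_{N_j}F[u_0]=g_j^{\mathrm{nr}}+g_j^{\mathrm r},\qquad g_j^{\mathrm{nr}}:=P_{N_j}\bigl(F[u_0]-F[P_{\sim N_j^2/\lambda}u_0]\bigr),\quad g_j^{\mathrm r}:=P_{N_j}F[P_{\sim N_j^2/\lambda}u_0],
\]
producing four products. When both factors are nonresonant, Lemma \ref{estiforF} places each of them in $U^2_K$ with $l^2_N$-weighted norm bounded by $\lambda^{-1}\|u_0\|_{H^{-3/16}}^2$; the $II_N^{(4)}$ argument from the proof of Proposition \ref{uuF} (which only uses the $U^2_K$-structure of the two inputs) then delivers the required bound. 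For the two mixed nonresonant/resonant terms, I would apply the first inequality of Lemma \ref{thekey} with the nonresonant factor as $v$ and the resonant factor as $P_{N_2}F[P_{\sim N_2^2/\lambda}u_0]$, and run exactly the Cauchy--Schwarz summation used in the proof of Proposition \ref{uftow}, replacing $\|P_{N_1}u\|_{U^2_K}$ by $\|g_1^{\mathrm{nr}}\|_{U^2_K}$. This picks up one extra $\lambda^{-1}$ from Lemma \ref{estiforF} compared to the analogous step in Proposition \ref{uftow} and therefore yields $\lesssim \lambda^{-2}\|u_0\|_{H^{-3/16}}^4$.

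The genuinely new case is resonant--resonant, where the second inequality of Lemma \ref{thekey} furnishes
\[
\|\eta P_N\mathscr B(g_1^{\mathrm r}\,g_2^{\mathrm r})\|_{U^2_K}\lesssim \lambda^{-1}N^{1/2}N_1^{-3/2}\|P_{\sim N_1^2/\lambda}u_0\|_{L^2}^2\|P_{\sim N_2^2/\lambda}u_0\|_{L^2}^2,
\]
where I interpret the $\|u_0\|_{L^2}^4$ appearing in Lemma \ref{thekey} as the product of the two frequency-localised $L^2$-norms that its proof actually produces. Substituting the Bernstein bound $\|P_{\sim N_j^2/\lambda}u_0\|_{L^2}^2\lesssim N_j^{3/4}\lambda^{-3/8}\|P_{\sim N_j^2/\lambda}u_0\|_{H^{-3/16}}^2$, performing the inner $N$-summation (which produces a factor $N_1^{3/4}$ up to a logarithm), and then applying Cauchy--Schwarz in $N_1$ against the $l^2_{N_1}$-summability of $\|P_{\sim N_1^2/\lambda}u_0\|_{H^{-3/16}}^2$ yields the required $\lesssim \lambda^{-2}\|u_0\|_{H^{-3/16}}^4$.

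The main obstacle is keeping the $\lambda$-budget balanced in this resonant--resonant subcase: Lemma \ref{thekey}'s second inequality costs $\lambda^{-1}$ instead of $\lambda^{-1/2}$, and each $L^2\to H^{-3/16}$ conversion via Bernstein adds a further $\lambda^{-3/8}$. The saving is the aggressive $N_1^{-3/2}$ weight in the lemma: once the $N$-sum returns $N_1^{3/4}$ and the two Bernstein factors return $N_1^{3/4}\cdot N_2^{3/4}=N_1^{3/2}$, the net $N_1$-power is absorbed by $N_1^{-3/2}$, and the $l^2_{N_1}$-Cauchy--Schwarz on the $u_0$-factors closes the estimate with exactly the claimed $\lambda^{-2}$ factor. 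Combined with the bounds in the other three subcases and the easy pieces handled by Proposition \ref{uuF}, this completes the proof.
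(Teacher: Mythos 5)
Your proposal follows essentially the same route as the paper: reduce via Proposition \ref{uuF}, Corollary \ref{Finz}, and Lemma \ref{estiforF} to the resonant--resonant contribution in $II_N^{(4)}$, then close that case with the second inequality of Lemma \ref{thekey} together with Bernstein applied to the frequency-localised pieces of $u_0$. One arithmetic slip in your final bookkeeping: the inner sum $\sum_{2\le N\ll N_1}N^{-3/4}\cdot N^{1/2}=\sum_N N^{-1/4}$ is $O(1)$, not $O(N_1^{3/4})$ --- as written, your own power count would leave a divergent factor $N_1^{3/4}$ after $N_1^{-3/2}$ absorbs the two Bernstein factors $N_1^{3/4}\cdot N_2^{3/4}$, whereas with the correct $O(1)$ the powers of $N_1$ cancel exactly and the sum closes at $\lambda^{-7/4}\lesssim\lambda^{-2}$, matching the paper's computation.
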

		\begin{proof}[\textbf{Proof}]
			Similar to the argument for Proposition \ref{uftow}, we only need to control the term
			$$\sum_{N_1\sim N_2\gg N\geq 2}N^{-3/4}\|\eta P_N\mathscr{B}(P_{N_1}(F[P_{\sim N_1^2/\lambda}u_0]) P_{N_2}(F[P_{\sim N_2^2/\lambda}u_0]))\|_{U^2_K}.$$
			By Lemma \ref{thekey} we can control it by
			\begin{align*}
				\sum_{N_1\sim N_2\gg N\geq 2} N^{-3/4}N_1^{-3/2}N^{1/2}\lambda^{-1}\|P_{\sim N_2^2/\lambda}u_0\|_{L^2}^4 \lesssim\lambda^{-7/4}\|u_0\|_{H^{-3/16}}^4.
			\end{align*}
			By Corollary \ref{Finz} and Proposition \ref{uuF}, we conclude the proof.
		\end{proof}
		
		Combining Lemmas \ref{linear}--\ref{uvwtov}, Corollary \ref{Finz}, Propositions \ref{uuF}, \ref{uftow}, and \ref{u04tov}, one can construct the solution $(u,w)$ of  \eqref{themodelmanu}. Let $v = w+F[u_0]$. By rescaling we obtain the local solution $(u,v)\in C([0,T];H^{-3/16}\times H^{-3/4})$ of initial system \eqref{model} with $(s_1,s_2) = (-3/16,-3/4)$. For general $(s_1,s_2)$, see the argument in \cite{banchenzhang}. One can use the method in this paper to manipulate the case $s_1,s_2<0$ with some modification. However there exists easier argument. For example if $s_2>-3/4$, we do not need rescaling. One can use the norms
		$$\|u\|_{X_\lambda^{s_1}}:= \|N^{s_1}P_Nu\|_{l^2_N U^2_{S_\lambda}},\quad \|v\|_{Y^{s_2}}:=\|P_1v\|_{L_x^2L_t^\infty}+\|N^{s_2}P_Nv\|_{l^2_{N\geq 2}V^2_K}.$$
		Then we can show the contraction for the iteration of the initial integral equation. 		
		If $s_2 = -3/4$, $s_1>-3/16$, we can show $\|\eta\mathscr{B}(\partial_x|u|^2)\|_{Y}\lesssim \|u\|_{X_\lambda^{s_1}}$. The argument is also much easier than the case $(s_1,s_2) = (-3/16,-3/4)$. See also \cite{wang2011cauchy}. We omit the details.
		
		\vspace{10pt}
		\textbf{Acknowledgments: The first author is supported in part by the NSFC, grant 12301116. The authors would like to thank Professors Boling Guo and Baoxiang Wang for their invaluable support and encouragement.}
		
		\phantomsection %不使用此条时, 书签参考文献总是会跳到参考文献前面的页面而非参考文献的第一页
		\bibliographystyle{amsplain}
		\addcontentsline{toc}{section}{References}
		\bibliography{reference}
		
		\begin{enumerate}
			\item[] \scriptsize\textsc{Yingzhe Ban: The Graduate School of China Academy of Engineering Physics, Beijing, 100088, P.R. China}
			
			\textit{E-mail address}: \textbf{banyingzhe22@gscaep.ac.cn}
			
			\item[] \scriptsize\textsc{Jie Chen: School of Sciences, Jimei University, Xiamen 361021, P.R. China}
			
			\textit{E-mail address}: \textbf{jiechern@163.com}
			
			\item[] \scriptsize\textsc{Ying Zhang: The Graduate School of China Academy of Engineering Physics, Beijing, 100088, P.R. China}
			
			\textit{E-mail address}: \textbf{zhangying21@gscaep.ac.cn}
		\end{enumerate}
\end{document}